\documentclass[12pt, reqno]{amsart}
\usepackage{amsmath}
\usepackage{amssymb}
\usepackage{amsfonts}
\usepackage{amsthm}
\usepackage{amscd}
\usepackage{relsize}
\usepackage{setspace}
\usepackage{geometry}
\usepackage{enumitem}
\usepackage{url}
\usepackage{xspace}
\usepackage{mathrsfs}
\usepackage{dsfont}
\usepackage{lmodern}
\usepackage{xcolor}
\usepackage[utf8]{inputenc}
\usepackage{mathtools}

\usepackage{hyperref}

\usepackage[textsize=footnotesize,textwidth=15ex]{todonotes}

\newtheorem{thm}{Theorem}[section]
\newtheorem{lem}[thm]{Lemma}
\newtheorem{prop}[thm]{Proposition}
\newtheorem{cor}[thm]{Corollary}

\theoremstyle{definition}
\newtheorem{remark}[thm]{Remark}
\newtheorem{example}[thm]{Example}

\numberwithin{equation}{section}

\newcommand{\FF}{\mathbf{F}}

\newcommand{\NN}{\mathbf{N}}

\newcommand{\ZZ}{\mathbf{Z}}

\newcommand{\balpha}{\boldsymbol{\alpha}}
\newcommand{\bbeta}{\boldsymbol{\beta}}

\DeclareMathOperator{\Aut}{Aut}
\DeclareMathOperator{\Alt}{Alt}
\DeclareMathOperator{\Sym}{Sym}

\begin{document}

\title[Fractal anti-tori]{Fractal anti-tori}

\author{Pierre-Emmanuel \textsc{Caprace}}
\author{Justin \textsc{Vast}}

\date{July 20, 2026}

\thanks{JV is a F.R.S.-FNRS Research Fellow; PEC and JV are supported in part by the FWO and the F.R.S.-FNRS under the EOS programme (project ID 40007542).}

\begin{abstract}
Let $\Gamma$ be a group acting properly and cocompactly on the product of two trees $T_1$ and $T_2$. 
An anti-torus  is a non-periodic flat plane in $T_1 \times T_2$ that is the convex hull of two secant periodic lines. That notion was introduced by Dani Wise as a tool to show that $\Gamma$ is irreducible. We establish a new criterion ensuring the existence of anti-tori, and use it to prove that if $\Gamma$ is an $S$-arithmetic lattice in a product of simple algebraic groups of rank one, then $T_1\times T_2$ contains anti-tori. As a byproduct, we obtain a sufficient condition ensuring that a group defined by a bi-reversible automaton contains non-abelian free sub-semigroups. We also introduce a new class of irreducible lattices acting regularly on the vertex set of a product of two trees, and containing anti-tori that are fractal aperiodic tilings  of the plane. This establishes a connection between lattices in products of trees and substitution tilings. 
\end{abstract}

\maketitle

%
%
%\begin{flushright}
%\begin{minipage}[t]{0.55\linewidth}\itshape\small
%\dots le monde naturel pénètre dans le spirituel, lui sert de pâture et concourt ainsi à opérer cet amalgame indéfinissable\dots
%
%\vspace{2mm}
%
%\hfill\upshape \textemdash C. Baudelaire,  \emph{\OE uvres complètes}, 1931.
%\end{minipage}
%\end{flushright}
%

\section{Introduction}

Let $X$ be a CAT(0) space and $\Gamma$ be a group acting properly and cocompactly  on $X$ by isometries. We say that a flat $P$ in $X$ is \textbf{periodic} if its stabilizer in $\Gamma$ acts cocompactly on $P$. 
An \textbf{anti-torus} is a non-periodic flat in $X$ which contains two secant periodic lines. 
In other words, an anti-torus is a $2$-dimensional flat $P$ which is the convex hull of two non-parallel geodesic lines, that are translation axes for two hyperbolic isometries $\gamma_1, \gamma_2 \in \Gamma$, and such that the stabilizer of $P$ in $\Gamma$ does not act cocompactly on $P$. We say that the anti-torus $P$ is \textbf{spanned} by $\gamma_1$ and $\gamma_2$. The  concept of an anti-torus was introduced by D.~Wise (see \cite{Wise_PhD} and \cite{Wise_CSC}), who identified it as an irreducibility criterion in case $X$ is a product of trees  (see \cite[Proposition~4.11]{Cap_survey}). 
Our first main result is the following  criterion ensuring the existence of an anti-torus. 

\begin{thm}\label{thm:anti-torus}
Let $T_1, T_2$ be locally finite trees with a cocompact automorphism group, and 
let $\Gamma \leq \Aut(T_1) \times \Aut(T_2)$ be a cocompact lattice.

Let also $(x_1, x_2) \in V(T_1) \times V(T_2)$ be a vertex, and  $\Gamma_1 = \Gamma_{x_1}$ and  $\Gamma_2 = \Gamma_{x_2}$ be the respective stabilizers of $x_1$ and $x_2$. Let $\gamma_1 \in \Gamma_1$ and $\gamma_2 \in \Gamma_2$ and let $\varphi \colon \Gamma \to G$ be a homomorphism to a group $G$ satisfying the following conditions:
\begin{enumerate}[label=(\roman*)]
\item $\varphi(\gamma_1)$ and $\varphi(\gamma_2)$ do not commute. 
\item For $i=1, 2$, the order of $\varphi(\gamma_i)$ is finite, and relatively prime to the order of the local action of $\Gamma$ at every vertex of $T_{i}$. 
\end{enumerate}
Then $T_1 \times T_2$ contains an anti-torus spanned by  powers of $\gamma_1$ and $\gamma_2$. In particular  $\Gamma$ is irreducible, and $\gamma_i$ acts as an automorphism of infinite order on $T_i$ for $i=1, 2$.
\end{thm}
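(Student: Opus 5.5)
The plan is to take for $P$ the flat spanned by the two axes, and to show that if $P$ were periodic then suitable powers of $\gamma_1$ and $\gamma_2$ would commute. Those powers would be coprime to the orders of $\varphi(\gamma_1)$ and $\varphi(\gamma_2)$, which contradicts (i). Let $n_i$ be the (finite) order of $\varphi(\gamma_i)$.

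\emph{A caveat on conventions.} Taken literally, $\gamma_i\in\Gamma_{x_i}$ fixes $x_i\in T_i$, so it could not be of infinite order on $T_i$ as the conclusion asserts. I therefore read the hypothesis as: $\gamma_i$ fixes the coordinate $x_j$ in the \emph{other} tree $T_j$ ($j\neq i$) and acts on $T_i$. This is consistent with (ii), which only involves local actions on $T_i$, and it is the reading I use throughout.

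\emph{Step 1: infinite order.} Since $\Gamma$ acts cocompactly, there are finitely many vertex orbits. Let $m$ be the least common multiple of the orders of all local actions of $\Gamma$ at vertices of $T_1$ and of $T_2$. Suppose $\gamma_1$ were elliptic on $T_1$. Then $\gamma_1^{m}$ fixes a vertex of $T_1$ and acts trivially on its star. Propagating outward, it fixes every vertex of $T_1$, so $\gamma_1^m$ lies in a vertex stabilizer of the proper action on $T_1\times T_2$, which is finite.

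To conclude I need the standard fact that every prime divisor of $|\Gamma_{(v,w)}|$ divides the order of some local action. This holds because a finite group acting faithfully on a tree and fixing a vertex is an iterated extension of subgroups of local actions. The same fact shows that, at the level of prime divisors, the order of $\gamma_1$ is built from local orders. I then want to deduce that $\varphi(\gamma_1)$ has order coprime to $n_1$, hence is trivial, contradicting (i).

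I expect this deduction to be delicate. Hypothesis (ii) only controls local actions on $T_1$, while $\gamma_1$ also acts on $T_2$. The first thing to try is to exploit that $\gamma_1$ fixes $x_2$, so that only the $T_1$-data enters. So $\gamma_i$ is hyperbolic on $T_i$; let $A_i$ be its axis and set $P=A_1\times A_2$. Then $P$ contains the periodic lines $A_1\times\{x_2\}$ and $\{x_1\}\times A_2$, translated by $\gamma_1$ and $\gamma_2$ respectively.

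\emph{Step 2: periodicity forces commuting powers.} Assume $P$ is periodic, with stabilizer $\Gamma_P$ acting cocompactly. Then there is an exact sequence $1\to K\to\Gamma_P\to\mathbf{Z}^2$, whose kernel $K$ fixes $P$ pointwise and is therefore finite. By the fact recalled in Step 1, every prime divisor of $|K|$ divides some local order.

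I will replace $\gamma_i$ by a power $\gamma_i^{a_i}$, with $a_i$ built only from local orders, so that $\gamma_i^{a_i}$ preserves $P$. For instance, $\gamma_1^{a_1}$ should act trivially on a large enough part of $T_2$ to fix $A_2$. Then a further power $\gamma_i^{a_ib_i}$, with $b_i$ involving only the primes of $|K|$ and of $|\Gamma_P/C_{\Gamma_P}(K)|$, lands in an abelian finite-index subgroup of $\Gamma_P$. This makes $\gamma_1^{N_1}$ and $\gamma_2^{N_2}$ commute.

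\emph{Step 3: contradiction.} By (ii), each $N_i$ is coprime to $n_i$, so $\varphi(\gamma_i^{N_i})$ generates the same cyclic group as $\varphi(\gamma_i)$. Hence $\varphi(\gamma_1)$ and $\varphi(\gamma_2)$ commute, contradicting (i). So $P$ is an anti-torus spanned by powers of $\gamma_1,\gamma_2$. Irreducibility then follows from \cite[Proposition~4.11]{Cap_survey}.

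\emph{Main obstacle.} The crux is the bookkeeping in Steps 1--2. I must guarantee that every exponent used involves only primes dividing local orders on the correct tree $T_i$. In particular, getting $\gamma_1^{a_1}$ to stabilize $A_2$ must use only $T_1$-data, even though $\gamma_1$ moves in $T_2$. If that fails, I would instead choose $A_2$ as the axis of a suitable conjugate of $\gamma_2$ inside the flat, so that no power of $\gamma_1$ needs to act on $T_2$ beyond fixing $x_2$.
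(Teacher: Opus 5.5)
Your proposal fails at its foundation, the ``caveat on conventions''. Its premise is false: an automorphism of a locally finite tree that fixes a vertex can have infinite order (e.g.\ $\mathrm{diag}(1+p,1)\in\mathrm{PGL}_2(\mathbf{Q}_p)$ fixes a vertex of the Bruhat--Tits tree), and this is exactly what the theorem asserts.

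The statement is meant literally. $\gamma_i$ fixes $x_i$, so it is elliptic on $T_i$. The conclusion is that it has infinite order in the profinite group $\Aut(T_i)_{x_i}$ (compare Proposition~\ref{prop:infinite-order}) and is hyperbolic on $T_{3-i}$. The anti-torus is spanned by the axis of $\gamma_1$ in $T_2$ and that of $\gamma_2$ in $T_1$.

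Hypothesis (ii) refers to the tree on which $\gamma_i$ is elliptic for a precise reason. With $n_i$ the lcm of the local orders on $T_i$, the power $\gamma_i^{n_i^k}$ acts trivially on the $k$-ball around $x_i$ in $T_i$. Condition (ii) ensures that $\varphi(\gamma_i^{n_i^k})$ still generates $\langle\varphi(\gamma_i)\rangle$, so (i) yields $[\gamma_1^{n_1^k},\gamma_2^{n_2^k}]\neq 1$ for all $k\geq 0$.

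Under your swapped reading the statement is false, so Steps 1--3 cannot be completed. Let $\Lambda_1=\langle r\rangle*\langle r'\rangle\cong\ZZ/4*\ZZ/4$ and $\Lambda_2\cong\ZZ/3*\ZZ/3$ act on their Bass--Serre trees $T_1,T_2$. These are regular of valency $4$, resp.\ $3$, with cyclic local actions of order $4$, resp.\ $3$. Let $\Gamma=\Lambda_1\times\Lambda_2$, let $x_1$ be the vertex fixed by $r$, and let $\varphi\colon\Gamma\to\Sym(3)$ be trivial on $\Lambda_2$ with $r\mapsto(1\,2)$ and $r'\mapsto(2\,3)$. Then $\gamma_1=(rr',1)$ fixes $x_2$ and $\gamma_2=(r,1)$ fixes $x_1$. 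Also $\varphi(\gamma_1)$ is a $3$-cycle (order prime to $4$), $\varphi(\gamma_2)$ is a transposition (order prime to $3$), and they do not commute. Yet $\Gamma$ is reducible, so there is no anti-torus, and $\gamma_2$ has finite order.

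The example also shows that the obstacle you flagged is fatal, not technical: the order of $\gamma_2$ is governed by the local actions on $T_1$, which your version of (ii) does not see. There are further errors independent of the reading:
\begin{itemize}
\item In Step 1, a power $\gamma^m$ acting trivially on the star of one vertex does not ``propagate outward''. One only gets that $\gamma^{m^k}$ is trivial on the $k$-ball.
\item In Step 2, the exponents $b_i$ built from the primes of $|K|$ and $|\Gamma_P/C_{\Gamma_P}(K)|$ involve local actions on both trees. So the coprimality asserted in Step 3 has no justification.
\end{itemize}

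The missing idea is that one never needs to control the orders of such finite groups. The paper proves two things by one mechanism: that $\gamma_i$ is hyperbolic on $T_{3-i}$, and that the flat spanned by the two axes is non-periodic. For the latter it first replaces $\gamma_i$ by some $\gamma_i^{n_i^j}$, so that $\gamma_1$ fixes a vertex on the axis of $\gamma_2$ in $T_1$ and vice versa.

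The mechanism is as follows. The commutator $[\gamma_1^{n_1^k},\gamma_2^{n_2^k}]$ lies in a finite group acting faithfully on some $r$-ball. For hyperbolicity, this group is the kernel of $\Gamma_{x_1,y_2}\to\Aut(T_1)$, where $y_2\in T_2$ is a supposed fixed point of $\gamma_1$. For non-periodicity, it is the kernel $K$ of the translation action on $P$ of a finite-index subgroup $L_0\le\mathrm{Stab}_\Gamma(P)$ containing powers of both $\gamma_i$. For $k$ large, in each tree factor one of the two elements fixes the relevant region pointwise while the other preserves it. In the first case that region is all of $T_1$ and a large ball around $y_2$; in the second, a large neighbourhood of the axis in $T_i$. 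Hence the commutator acts trivially on the ball, so it is trivial, contradicting the consequence of (i) above.
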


Given a group $G$ acting on a tree $T$, the \textbf{local action} of $G$ at a vertex $v \in V(T)$ is the permutation group formed by the action of the stabilizer $G_v$ on the set of edges emanating from  $v$.  If $T$ is locally finite, this is  a finite group for every vertex. 

Following the terminology introduced in \cite{Cap_survey}, in the special case where the lattice $\Gamma$ acts regularly (i.e. sharply transitively) on the vertex set $V(T_1) \times V(T_2)$, we say that $\Gamma$ is a \textbf{BMW group}.  In that case, the $\Gamma$-action on both trees  $T_1$ and $T_2$ is vertex-transitive, so that both are homogeneous. The pair $(d_1, d_2)$ formed by the respective valencies of $T_1$ and $T_2$ is called the \textbf{degree} of $\Gamma$. Anti-tori have been mostly studied so far in the special case of BMW groups. Theorem~\ref{thm:anti-torus} is valid for arbitrary lattices in products of trees. 

It is an open question, recorded in \cite[\S4.4]{Cap_survey}, to determine whether every irreducible BMW group and, more generally, every irreducible lattice in a product of two trees, contains an anti-torus. Theorem~\ref{thm:anti-torus} allows us to answer that question in the case of $S$-arithmetic lattices in products of simple groups of rank~one:

\begin{cor}\label{cor:anti-tori:arithmetic}
Let $\Gamma$ is an $S$-arithmetic group in the product of two simple algebraic  groups of rank-one over local fields, whose respective Bruhat--Tits trees are denoted by $T_1$ and $T_2$. Then  $T_1 \times T_2$ contains an anti-torus. 
\end{cor}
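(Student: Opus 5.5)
We will deduce Corollary~\ref{cor:anti-tori:arithmetic} from Theorem~\ref{thm:anti-torus} by taking for $\varphi$ a reduction map modulo a suitable prime (or ideal) outside $S$. Write $\Gamma \leq \mathbf{G}(k_{v_1}) \times \mathbf{G}'(k_{v_2})$, or more intrinsically $\Gamma$ commensurable with $\mathbf{G}(\mathcal{O}_S)$ for an absolutely almost simple simply connected group $\mathbf{G}$ over a global field $k$, with $S$ containing two places $v_1, v_2$ where $\mathbf{G}$ has rank one (and possibly archimedean places where it is anisotropic). Passing to a finite index subgroup does not affect the conclusion, since a periodic line for a subgroup is periodic for $\Gamma$ and non-periodicity of a flat under a finite-index subgroup is the same as under $\Gamma$. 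First I would choose the vertex $(x_1,x_2)$ and replace $\Gamma$ by a finite-index subgroup so that the local actions are controlled: the local action of $\mathbf{G}(k_{v_i})$ at a vertex of the Bruhat--Tits tree factors through a finite group of Lie type over the residue field $\FF_{q_i}$ (essentially $\mathrm{PGL}_2(\FF_{q_i})$ or a rank one unitary group acting on the projective line / Hermitian unital), so its order has only finitely many prime divisors, all bounded in terms of $q_1,q_2$. Let $P$ be this finite set of primes; any subgroup has local action of order dividing an element of a fixed finite set of integers with prime divisors in $P$.

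Next, the stabilizer $\Gamma_{x_1}$ is an $S\setminus\{v_1\}$-arithmetic group (it is commensurable with $\mathbf{G}(\mathcal{O}_{S \setminus\{v_1\}})$ intersected with a compact open subgroup at $v_1$), and similarly for $\Gamma_{x_2}$. Each is Zariski dense in $\mathbf{G}$ by Borel density, since $\mathbf{G}$ is isotropic at $v_2$ (resp.\ $v_1$). By strong approximation (applied to $S\setminus\{v_1\}$, which still contains the isotropic place $v_2$), for all but finitely many places $w \notin S$ the reduction map $\varphi_w \colon \Gamma \to \mathbf{G}(\FF_w)$ restricted to $\Gamma_{x_1}$ and to $\Gamma_{x_2}$ is surjective onto (a fixed finite-index subgroup of) $\mathbf{G}(\FF_w)$, where $\FF_w$ is the residue field of $w$.

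Now choose $w$ such that the characteristic $\ell$ of $\FF_w$ does not lie in $P$; there are infinitely many such $w$. In $\mathbf{G}(\FF_w)$ pick two non-commuting unipotent elements $u_1,u_2$ (e.g.\ from opposite root subgroups); they have order a power of $\ell$, hence coprime to all local action orders. By surjectivity pick $\gamma_i \in \Gamma_{x_i}$ with $\varphi_w(\gamma_i) = u_i$. Conditions (i) and (ii) of Theorem~\ref{thm:anti-torus} hold, giving the anti-torus.

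The main obstacle is the precise surjectivity statement: strong approximation requires $\mathbf{G}$ simply connected and that the relevant $S'$ contain a place where $\mathbf{G}$ is isotropic; one must check this for $S'=S\setminus\{v_i\}$ and handle the passage between the given $\Gamma$ (possibly adjoint, or acting with a larger image in $\Aut(T_i)$) and $\mathbf{G}(\mathcal{O}_S)$. I would pass to the simply connected cover and a finite index subgroup, noting that local actions of a subgroup are subgroups of the original local actions, so the coprimality in (ii) is preserved, and that $u_i$ being in the image of a finite-index subgroup can be arranged because the image of a finite-index subgroup is normal of bounded index in $\mathbf{G}(\FF_w)$, which is generated by unipotents and quasi-simple for large $w$.
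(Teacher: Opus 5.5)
\textbf{Gap: the choice of a residue characteristic $\ell\notin P$ fails for function fields.} The step ``choose $w$ such that the characteristic $\ell$ of $\FF_w$ does not lie in $P$; there are infinitely many such $w$'' only works when the global field $k$ has characteristic $0$. The corollary also covers $S$-arithmetic lattices over global function fields. A typical example is a cocompact quaternionic lattice over $\FF_3(t)$ acting on a product of two $4$-regular trees.

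In that setting every residue field $\FF_w$ has characteristic $p=\mathrm{char}(k)$. The residue fields $\FF_{q_i}$ at $v_1,v_2$ have characteristic $p$ as well. So $p\in P$: by your own description the local actions involve $\mathrm{PGL}_2(\FF_{q_i})$ or a rank-one unitary group over $\FF_{q_i}$, whose orders are divisible by $p$. Since $\Gamma$ has dense projections, its local actions coincide with those of the ambient groups, so this applies to $\Gamma$ itself.

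Hence every unipotent element of every congruence quotient has $p$-power order, and condition (ii) of Theorem~\ref{thm:anti-torus} can never be met with unipotents. In the tetravalent example, the local actions have order dividing $24$, while every congruence quotient is a group over some $\FF_{3^e}$ whose unipotents have order a power of $3$. As written, your argument proves the corollary only over number fields.

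\textbf{How to repair it.} Use semisimple elements of large prime order instead of unipotents; this is what the paper does. Your strong-approximation set-up can be kept unchanged.
\begin{enumerate}
\item For a place $w\notin S$ with $|\FF_w|=q=p^e$, let $s$ be a primitive prime divisor of $q-1$ (Zsigmondy). That is, $s\mid p^e-1$ but $s\nmid p^b-1$ for $b<e$.
\item Then $p$ has multiplicative order exactly $e$ modulo $s$, so $e\mid s-1$ and $s>e$. Taking places $w$ of large degree therefore forces $s\notin P$.
\item The finite quasi-simple quotient $G$ contains elements $g$ of order $s$, for instance in a split torus containing $\FF_w^\times$. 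For $s$ large, $g$ is non-central, so some conjugate $h$ of $g$ does not commute with $g$.
\item Lift $g$ to $\Gamma_{x_1}$ and $h$ to $\Gamma_{x_2}$ using your surjectivity statement, and apply Theorem~\ref{thm:anti-torus}.
\end{enumerate}
The paper notes explicitly that in characteristic $0$ one could instead take elements of order the residue characteristic, which is precisely your argument.
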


%Indeed, congruence quotients of $\Gamma$ provide a sufficiently rich source of quotients. 

For special families of $S$-arithmetic BMW groups of quaternionic type, the existence of anti-tori has been established by D.~Rattaggi~\cite{Rattaggi_GeomDed} and Bondarenko--Bonda\-renko~\cite{BoBo24}.

As observed by D.~Wise \cite{Wise_PhD, Wise_CSC}, an anti-torus in a BMW group $\Gamma \leq \Aut(T_1) \times \Aut(T_2)$ gives rise to an aperiodic square  tiling of the plane: the square complex $T_1 \times T_2$ can be viewed as a presentation complex of $\Gamma$, whose squares are naturally equipped by a labelling by the defining relators of $\Gamma$. Each flat plane in $T_1 \times T_2$ inherits the structure of a square tiling colored by those relators. When the flat is an anti-torus, that tiling is aperiodic.  This is illustrated in Figure~\ref{figJW},   that represents an anti-torus found by Janzen--Wise  in a BMW group $\Gamma_{\mathrm{JW}}$ of degree~$(4, 4)$. 
As we shall see in Section~\ref{sec:anti-tori}, Theorem~\ref{thm:anti-torus} allows one to  recover that anti-torus (see Proposition~\ref{prop:JW1}), and to  answer positively a question asked in \cite[Section~2]{JW} concerning another lattice in a product of two tetravalent trees (see Proposition~\ref{prop:JW2}).

\begin{figure}[h]
\includegraphics[width=6.8cm]{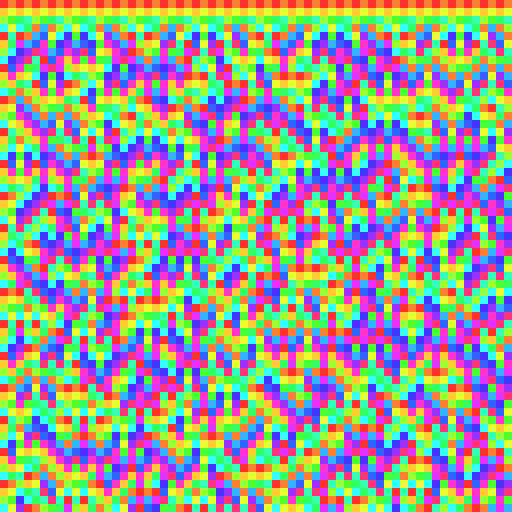} \hspace{.3cm}\includegraphics[width=6.8cm]{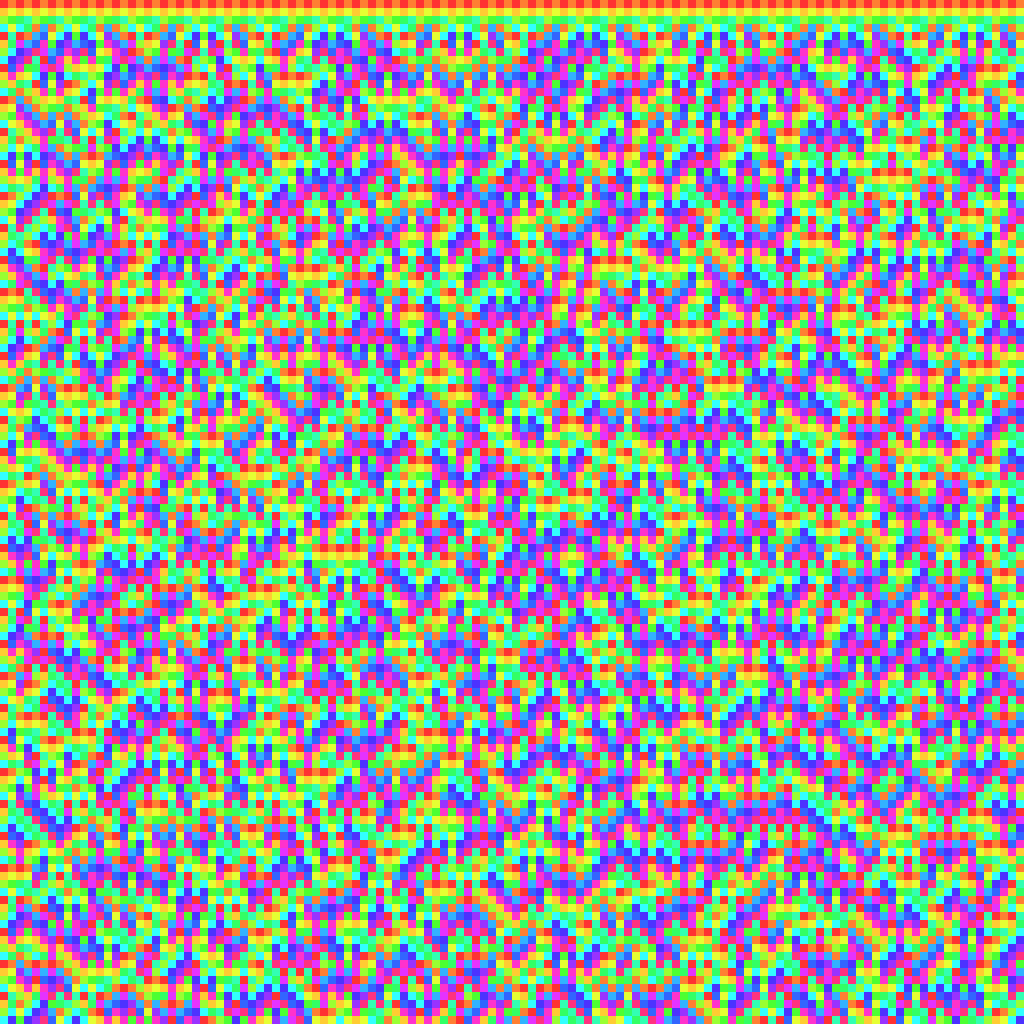}

\vspace{.3cm}
\includegraphics[width=6.8cm]{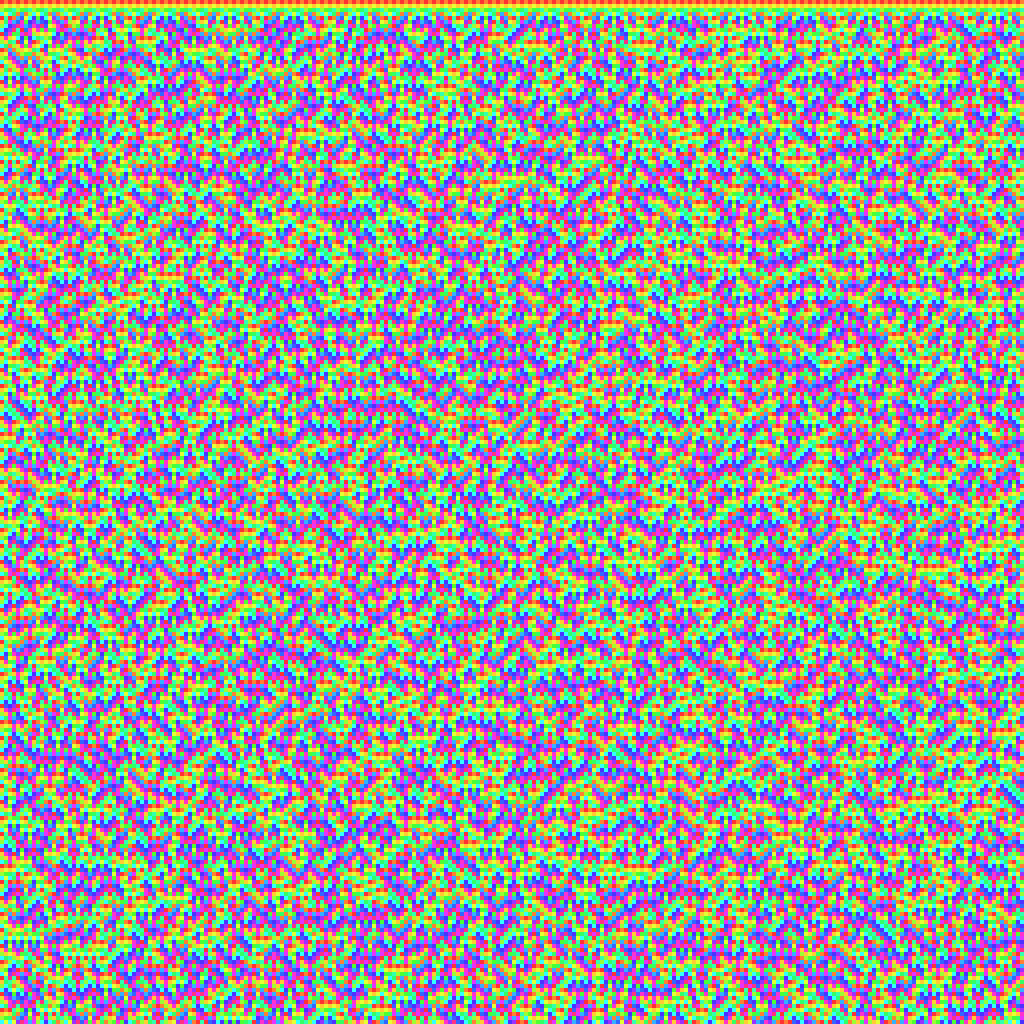} \hspace{.3cm}\includegraphics[width=6.8cm]{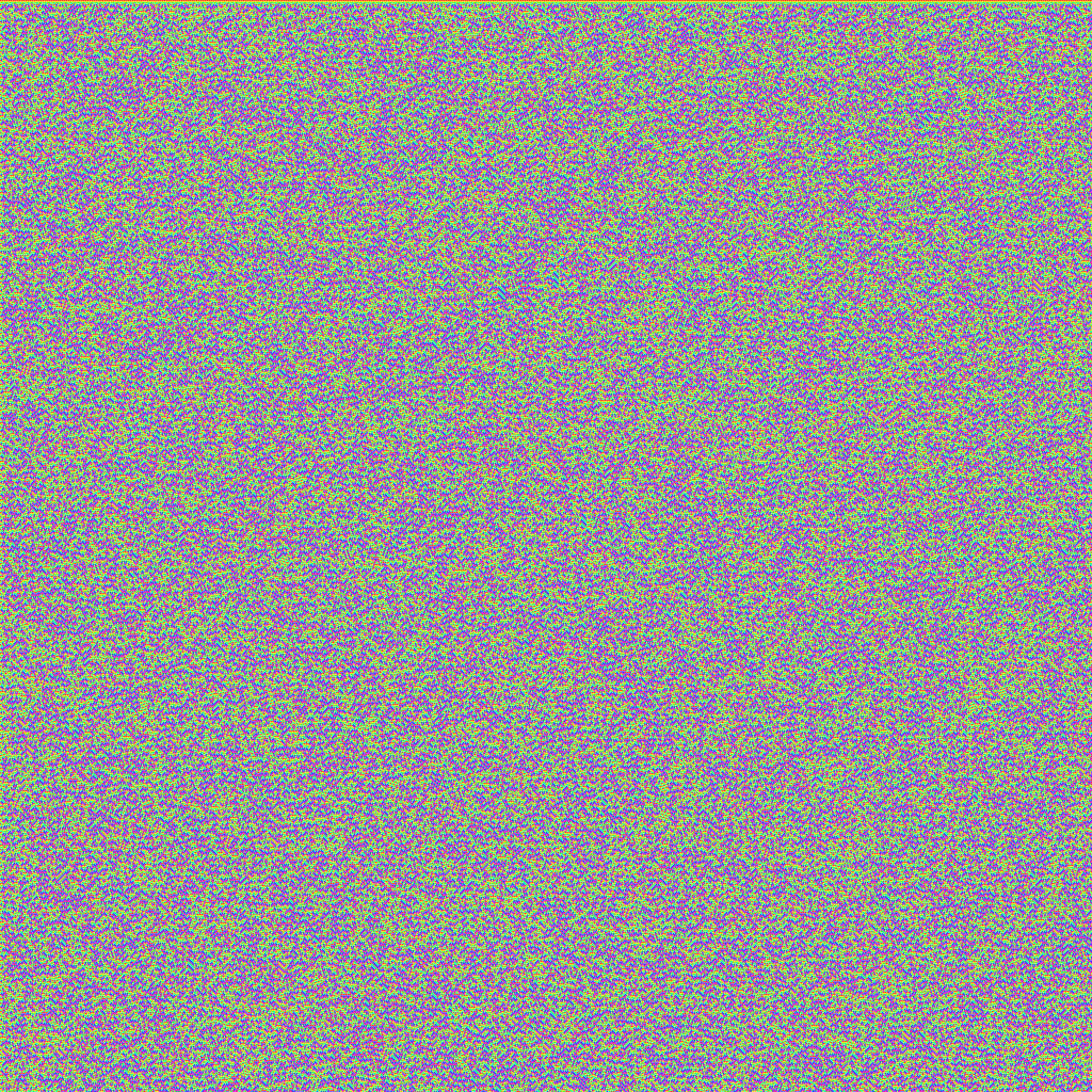}

\caption{The anti-torus of Janzen--Wise~\cite{JW}; the pictures represent a portion of size $2^n$   of the first quadrant, for  $n=6, 7, 8, 10$; the origin is placed on the top left corner and the $y$-axis is oriented downward. We have used $16$ different colors, each  representing a relation, taking into account the orientation of the corresponding square in the tiling.}
\label{figJW}
\centering
\end{figure}

A close connection between BMW groups and bi-reversible Mealy automata has been highlighted by Glasner--Mozes~\cite{GM}. The basic definitions concerning automata are recalled in Section~\ref{sec:bireversible} below; we refer to \cite{Nekra} for a detailed overview. Given a bi-reversible automaton $\mathcal B$, two basic questions are whether the automaton group $G_{\mathcal B}$ defined  by $\mathcal B$ is finite, and  whether it contains an element of infinite order. It has been conjectured in \cite{GodKli} that   $G_{\mathcal B}$ is infinite if and only if it contains an element of infinite order. Francoeur--Mitrofanov have shown that if  $G_{\mathcal B}$ contains an element of infinite order, then it contains a non-abelian free semigroup, see \cite[Corollary~1.2]{FM}. In this direction, we point out the following consequence of Theorem~\ref{thm:anti-torus}, providing a new criterion ensuring that $G_{\mathcal B}$ contains elements of infinite order. 

\begin{cor}\label{cor:birev-non-torsion}
Let $\mathcal B= (Q, L, \lambda, \rho)$ be a bi-reversible automaton. Let $n_{\mathcal B}$   be the order of the permutation group $\Sigma_{\mathcal B}\leq \Sym(L)$ generated by the invertible automaton $\mathcal B$, i.e. $\Sigma_{\mathcal B} = \langle \lambda(q, \cdot) \mid q \in Q\rangle$.  Let also $\Gamma_{\mathcal B}$ be the group defined by the presentation
$$\Gamma_{\mathcal B} = \langle  Q \sqcup L \mid q\lambda(q, \ell) = \ell \rho(q, \ell) \ \forall (q, \ell ) \in Q \times L\rangle.$$  
Suppose that   $\gamma_Q, \gamma_L \in \Gamma_{\mathcal B}$ are elements that belong respectively to the subgroups of $\Gamma_{\mathcal B}$ generated by $Q$ and $L$, and that  $\varphi\colon \Gamma_{\mathcal B} \to H$ is a homomorphism to a group $H$ satisfying the following conditions:
\begin{enumerate}[label=(\roman*)]
\item   $\varphi(\gamma_Q)$ ad $\varphi(\gamma_L)$ do not commute.
\item The order of $\varphi(\gamma_Q)$ is finite, and relatively prime to $n_{\mathcal B} n_{((\mathcal B^*)^{-1})^*}$. 
\item The order of $\varphi(\gamma_L)$ is finite, and relatively prime to $n_{\mathcal B^*} n_{(\mathcal B^{-1})^*}$. 
\end{enumerate}
Then the groups $G_{\mathcal B}$ and $G_{\mathcal B^*}$ defined by the automaton $\mathcal B$ and its dual $\mathcal B^*$ both contain non-abelian free semigroups. In particular they have exponential growth, and  contain elements of infinite order.
\end{cor}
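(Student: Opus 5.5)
Apply Theorem~\ref{thm:anti-torus} to the lattice $\Gamma_{\mathcal B}$ acting on the square complex of the presentation, then convert the anti-torus into a free subsemigroup of the automaton groups.

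\textbf{Step 1.} By Glasner--Mozes~\cite{GM}, bi-reversibility of $\mathcal B$ means each vertex link of the presentation complex of $\Gamma_{\mathcal B}$ is a complete bipartite graph on $Q^{\pm}\sqcup L^{\pm}$. Hence its universal cover is $T_1\times T_2$, with $T_1$ of valency $2|Q|$ and $T_2$ of valency $2|L|$, and $\Gamma_{\mathcal B}$ is a BMW group. The subgroup $\langle Q\rangle$ is free and acts regularly on $V(T_1)$, and it fixes the vertex $x_2$ of $T_2$ corresponding to the base vertex. Symmetrically, $\langle L\rangle$ fixes $x_1\in V(T_1)$. I will check which factor is which so that the coprimality conditions match. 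With that convention, $\gamma_Q\in\Gamma_{x_2}$ and $\gamma_L\in\Gamma_{x_1}$.

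\textbf{Step 2.} Identify the local actions, which are the same at every vertex by transitivity. The stabilizer of a vertex of $T_2$ in $\Gamma_{\mathcal B}$ is $\langle Q\rangle$ (up to conjugacy). Its action on the $2|L|$ edges at that vertex is through $L\sqcup L^{-1}$:
\begin{itemize}[label={}]
\item on the outgoing $L$-edges, $q$ acts by $\lambda(q,\cdot)$;
\item on the incoming $L^{-1}$-edges, $q$ acts by the output permutation of the automaton obtained from $\mathcal B$ by inversion and dualization, here $((\mathcal B^*)^{-1})^*$.
\end{itemize}
So the local action is a subgroup of $\Sigma_{\mathcal B}\times\Sigma_{((\mathcal B^*)^{-1})^*}$, and its order divides $n_{\mathcal B}\, n_{((\mathcal B^*)^{-1})^*}$. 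Similarly the local action on $T_1$ has order dividing $n_{\mathcal B^*}\, n_{(\mathcal B^{-1})^*}$. The main obstacle is pinning down exactly which of the four automata governs the action on which set of edges; I would do this by reading off the square relations $q\lambda(q,\ell)=\ell\rho(q,\ell)$ and their inverses in each of the four orientations.

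Conditions (ii) and (iii) then give hypothesis (ii) of Theorem~\ref{thm:anti-torus}, and (i) gives its hypothesis (i). Therefore $T_1\times T_2$ contains an anti-torus spanned by powers $\gamma_Q^a$ and $\gamma_L^b$. In particular these act with infinite order on the respective trees.

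\textbf{Step 3.} Deduce free semigroups. The group $G_{\mathcal B}$ is the image of $\langle Q\rangle$ in $\Aut(T_2)$, i.e.\ the quotient by the kernel of its action on the rooted tree $L^*$, which sits inside $T_2$ around $x_2$. Since $\gamma_L^b$ fixes $x_1$ and acts with infinite order on $T_2$, its image in $G_{\mathcal B^*}$ has infinite order. Symmetrically, $\gamma_Q$ has infinite-order image in $G_{\mathcal B}$. This requires checking that the action of $\langle L\rangle$ on $T_1$ factors through its action on the rooted tree at $x_1$, which is exactly the automaton action of $\mathcal B^*$ up to the inverse letters.

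Then apply Francoeur--Mitrofanov~\cite[Corollary~1.2]{FM}: an infinite-order element in the group of a bi-reversible automaton yields a non-abelian free semigroup. This gives free semigroups in both $G_{\mathcal B}$ and $G_{\mathcal B^*}$, and hence exponential growth. If the passage from $\langle Q\rangle$ to its action on the full tree $T_2$, including the $L^{-1}$-directions, causes difficulty, I would instead use that $G_{\mathcal B}$ is infinite precisely when some element of $\langle Q\rangle$ acts with unbounded orbits on $L^*$. An element acting hyperbolically-free on $T_2$ with unbounded orbits still forces infinite order on the rooted subtree, since the subtree spanned by positive words controls the whole tree via bi-reversibility.
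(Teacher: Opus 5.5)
Your Steps~1--2 and the final appeal to \cite[Corollary~1.2]{FM} match the paper's proof. This covers the BMW structure of $\Gamma_{\mathcal B}$ on $T_Q\times T_L$, the bounds $n_{\mathcal B}\,n_{((\mathcal B^*)^{-1})^*}$ and $n_{\mathcal B^*}\,n_{(\mathcal B^{-1})^*}$ for the local actions on $T_L$ and $T_Q$, and the application of Theorem~\ref{thm:anti-torus}.

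The gap is in Step~3, in passing from ``$\gamma_Q$ acts with infinite order on $T_2=T_L$'' to ``the image of $\gamma_Q$ in $G_{\mathcal B}$ has infinite order''. The automaton group only sees the action of $\langle Q\rangle$ on the $|L|$-ary cone $C$ of positive words at $x_2$, not on the $2|L|$-regular tree $T_2$. A priori an element could act with finite order on $C$ while having unbounded orbits on vertices reached through $L^{-1}$-edges. Your sentence identifying $G_{\mathcal B}$ with the image of $\langle Q\rangle$ in $\Aut(T_2)$ asserts exactly that this cannot happen, and you leave it as ``requires checking''.

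The fallback does not supply this step, for three reasons:
\begin{enumerate}
\item The equivalence ``$G_{\mathcal B}$ is infinite iff some element has unbounded orbits on $L^*$'' is the open conjecture of \cite{GodKli}. Moreover, mere infiniteness of $G_{\mathcal B}$ would not feed \cite{FM}, which needs an element of infinite order.
\item $\gamma_Q$ is elliptic, not hyperbolic, on $T_2$.
\item ``Positive words control the whole tree'' is precisely the claim to be proved.
\end{enumerate}
Also, for $\gamma_L$ the relevant tree is $T_1$, where it fixes $x_1$, not $T_2$.

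The paper closes this step by citing \cite[Proposition~2.12]{GM}. You can also check it directly. Identify $V(T_2)$ with $\Gamma_{\mathcal B}/\langle Q\rangle$, let $g\in\langle Q\rangle$ act trivially on $C$, and fix $\ell\in L$.
\begin{itemize}
\item The element $h_k=\ell^{-k}g\ell^k$ lies in $\langle Q\rangle$ and acts trivially on $C$. Since $g\ell^k=\ell^kh_k$, the $Q$-words $g$ and $h_k$ project to geodesics of $T_1$ with the same endpoints, so $|h_k|_Q=|g|_Q$.
\item Hence the sequence $h_{k+1}=\ell^{-1}h_k\ell$ takes finitely many values and is eventually periodic. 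That is, $h_{k_0}$ commutes with $\ell^r$ for some $r\geq 1$.
\item Writing $g=\ell^{k_0-jr}h_{k_0}\ell^{jr-k_0}$ shows that $g$ fixes $\ell^{k_0-jr}C$, which contains $\ell^{-1}C$ once $jr>k_0$. Thus $g$ fixes $\ell^{-1}$, and $\ell g\ell^{-1}\in\langle Q\rangle$ again acts trivially on $C$.
\item Likewise $\ell^{-1}g\ell$ acts trivially on $C$, because $g$ fixes $\ell C\subseteq C$.
\item Writing $g\,x_1\cdots x_n=x_1(x_1^{-1}gx_1)x_2\cdots x_n$, induction on word length shows that $g$ fixes every vertex of $T_2$.
\end{itemize}
So the kernels of the actions of $\langle Q\rangle$ on $C$ and on $T_2$ coincide, and infinite order on $T_2$ does give infinite order in $G_{\mathcal B}$. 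Here the left-multiplication action on $C$ is that of $\mathcal B^{-1}$, which generates the same group.
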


We refer to Examples~\ref{ex:KPS1} and~\ref{ex:KPS2} for two illustrations concerning the  bi-reversible automata $\mathcal B$ and $\mathcal C$ studied in \cite[\S3.1 and \S3.2]{KPS}. According to the authors of \cite{KPS}, those examples stand out  among the non-symmetric $4$-state invertible automata over a $2$-letter alphabet for which standard methods of finding elements of infinite order do not work. We shall see that Corollary~\ref{cor:birev-non-torsion} applies to them, which recovers \cite[Propositions~3.1 and 3.6]{KPS} by  an alternative approach.

\medskip

In looking for other potential applications of Theorem~\ref{thm:anti-torus}, we explored the list of \textit{possibly irreducible} BMW groups of small degree that were enumerated by N.~Radu \cite{Radu_PhD, Radu}. Besides anti-tori, various other irreducibility criteria are known, notably in case the local action on one of the tree factor is doubly transitive, see \cite[Section~4]{Cap_survey}. In our exploration, we observed that Theorem~\ref{thm:anti-torus} applies to the following example, whose local actions on  both $T_1$ and $T_2$ are  $2$-groups (hence cannot be doubly transitive), so that   none of the known irreducibility criteria discussed in  \cite[Section~4]{Cap_survey} applies:

\begin{cor}\label{cor:first-fractal-example}
The group 
\begin{align*}
\Gamma_0 = \langle a, b, x, y, z, t \mid &x^2, y^2, z^2, t^2, \\
& axb^{-1} z, aya^{-1}t,  aza^{-1}x,  ata^{-1}y,
 byb^{-1} x,   bzb^{-1} t,   btb^{-1} y\rangle.  
\end{align*}
is an irreducible BMW group of degree $(4, 4)$. The elements   $a$ and $xy$ span an anti-torus.
\end{cor}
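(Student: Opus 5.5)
The plan splits the corollary into two parts: first show that $\Gamma_0$ is a BMW group of degree $(4,4)$, then obtain the anti-torus from Theorem~\ref{thm:anti-torus}. Irreducibility then follows from the last assertion of that theorem.

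\emph{Step 1: $\Gamma_0$ is a BMW group of degree $(4,4)$.} Put $A=\{a^{\pm1},b^{\pm1}\}$ and $X=\{x,y,z,t\}$; since each element of $X$ is an involution, $X=X^{-1}$. I would view the presentation as a one-vertex square complex with two classes of edges. The horizontal edges are labelled by $a,b$; the vertical edges are labelled by $x,y,z,t$, with the relators $x^2,\dots,t^2$ identifying each vertical loop with its inverse. Each relator of length four is a square, and in the relevant sense these are the geometric squares of a VH-complex.
\begin{itemize}
\item I would list, for each square, its four corners, i.e.\ the pairs in $A\times X$ read around its boundary.
\item I would check that each of the $16$ pairs $(\alpha,\xi)\in A\times X$ occurs exactly once. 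Some listed relators are cyclic conjugates or inverses of one another once the involution relations are used, so duplicates have to be discarded before counting.
\item It follows that the link of the vertex is the complete bipartite graph $K_{4,4}$.
\end{itemize}
The universal cover is then $T_4\times T_4$, and $\Gamma_0$ acts on it regularly on vertices. With the standard convention, $\langle X\rangle$ fixes the base vertex $x_1$ of $T_1$ and $\langle a,b\rangle$ fixes the base vertex $x_2$ of $T_2$.

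\emph{Step 2: local actions.} The local action of $\Gamma_0$ at the vertices of $T_1$ is the permutation group on $A$ generated by the permutations $\lambda(\xi,\cdot)$, $\xi\in X$, read off from the squares. The local action on $T_2$ is the group on $X$ generated by the permutations induced by $a^{\pm1},b^{\pm1}$. I would compute both groups explicitly from the square list of Step 1 and verify that both are $2$-groups. This also explains why no doubly transitive criterion applies. Hence, in condition~(ii) of Theorem~\ref{thm:anti-torus}, ``relatively prime to the local action'' reduces to ``of odd order''.

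\emph{Step 3: a finite quotient.} Take $\gamma_1=xy\in\Gamma_{x_1}$ and $\gamma_2=a\in\Gamma_{x_2}$. It remains to produce a homomorphism $\varphi\colon\Gamma_0\to G$ to a finite group such that:
\begin{itemize}
\item $\varphi(x)$ and $\varphi(y)$ are involutions whose product has odd order;
\item $\varphi(a)$ has odd order;
\item $\varphi(a)$ and $\varphi(xy)$ do not commute.
\end{itemize}
This is the main obstacle: small groups do not work.
\begin{itemize}
\item In $\Sym(3)$, any two elements of order $3$ commute.
\item In $\Alt(4)$, involutions commute, so $\varphi(xy)$ would have order at most $2$.
\end{itemize}
I would therefore search first in $\Alt(5)$ or $\mathrm{PSL}_2(\FF_7)$, aiming for $\varphi(xy)$ and $\varphi(a)$ both of order $3$ or $5$. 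Concretely, I would choose involutions $\varphi(x),\varphi(y),\varphi(z),\varphi(t)$ and elements $\varphi(a),\varphi(b)$, solve the relations successively, and check all seven four-letter relators. The relations $aya^{-1}=t$, $ata^{-1}=y$, $aza^{-1}=x$ already constrain these choices considerably. If no such $\varphi$ exists there, I would let a computer enumerate low-index subgroups or quotients of $\Gamma_0$ onto small permutation groups.

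Once $\varphi$ is found, Theorem~\ref{thm:anti-torus} yields an anti-torus spanned by powers of $xy$ and $a$, and hence one spanned by $a$ and $xy$ themselves. It also gives the irreducibility of $\Gamma_0$.
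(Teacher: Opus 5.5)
Your Steps 1 and 2 follow the paper, which simply asserts that the local actions on both trees are $2$-groups. Step 3, however, carries the whole argument and is set up so that it cannot succeed. There is no homomorphism $\varphi$ from $\Gamma_0$ to any group such that $\varphi(a)$ has finite odd order and does not commute with $\varphi(xy)$.

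Indeed, the relators $aya^{-1}t$ and $ata^{-1}y$ give $aya^{-1}=t$ and $ata^{-1}=y$, so $a^2$ commutes with $y$. If $\varphi(a)$ has odd order $m$, then $\varphi(a)=\varphi(a^2)^{(m+1)/2}$ commutes with $\varphi(y)$. Hence $\varphi(t)=\varphi(a)\varphi(y)\varphi(a)^{-1}=\varphi(y)$. The relators $byb^{-1}x$ and $btb^{-1}y$ then give $\varphi(x)=\varphi(b)\varphi(y)\varphi(b)^{-1}=\varphi(b)\varphi(t)\varphi(b)^{-1}=\varphi(y)$, so $\varphi(xy)=1$. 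Your search in $\Alt(5)$, $\mathrm{PSL}_2(\FF_7)$, or by computer is therefore bound to fail. As written, the proposal never reaches the hypotheses of Theorem~\ref{thm:anti-torus}.

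The missing observation is that you need not take $\gamma_2=a$. Apply Theorem~\ref{thm:anti-torus} to $\gamma_2=a^2$, which still lies in $\langle a,b\rangle=\Gamma_{x_2}$. An anti-torus spanned by powers of $a^2$ and $xy$ is spanned by powers of $a$ and $xy$. Then only $\varphi(a^2)$ needs odd order, and by the computation above $\varphi(a)$ must have order twice an odd number.

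This is what the paper does. It exhibits an explicit $\varphi\colon\Gamma_0\to\Sym(8)$ with image $\Alt(4)\wr C_2$, where $\varphi(a)=(1,5,2,6,3,7)(4,8)$ has order $6$. Meanwhile $\varphi(a^2)=(1,2,3)(5,6,7)$ and $\varphi(xy)=(1,2,4)(5,8,6)$ both have order $3$ and do not commute. Note that this quotient is built from $\Alt(4)$, which you discarded. There, $\varphi(x)$ and $\varphi(y)$ are involutions swapping the two $\Alt(4)$ factors, so their product can have order $3$.

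A minor point in Step 1: none of the seven relators duplicates another. Rather, each of the six relators of the form $\alpha\xi\alpha^{-1}\eta$ has only two distinct corner pairs, each appearing at two corners. The count is therefore $4+6\cdot 2=16$, and the link is $K_{4,4}$.
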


When we constructed the aperiodic tiling afforded by the anti-torus from Corollary~\ref{cor:first-fractal-example}, we were surprised to discover the fractal tiling depicted in Figure~\ref{fig1}, and the striking contrast with the anti-torus appearing in Figure~\ref{figJW}.

\begin{figure}[h]
\includegraphics[width=6.8cm]{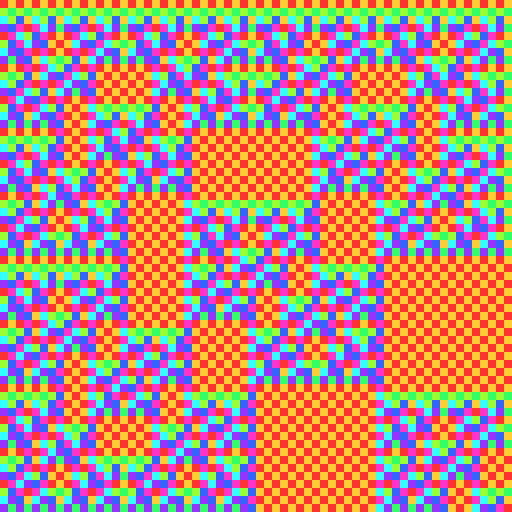} \hspace{.3cm}\includegraphics[width=6.8cm]{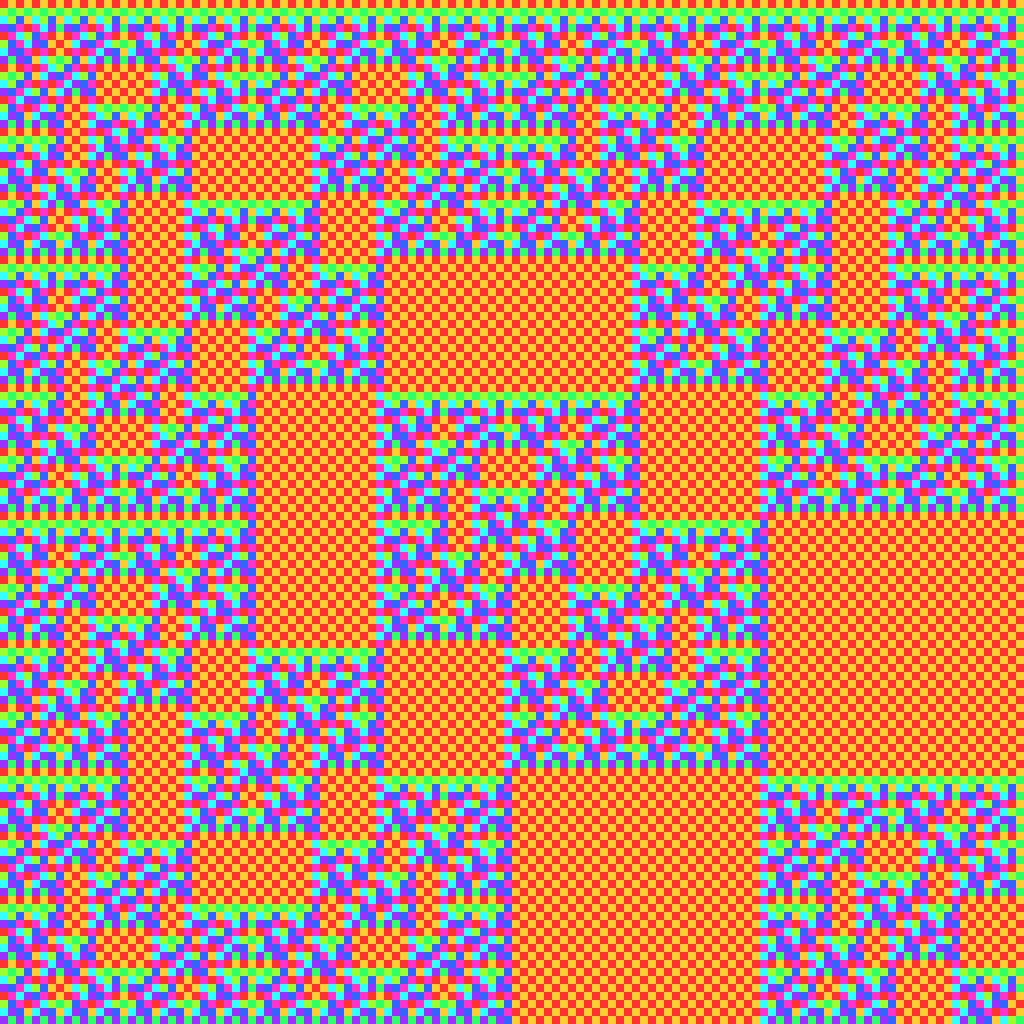}

\vspace{.3cm}
\includegraphics[width=6.8cm]{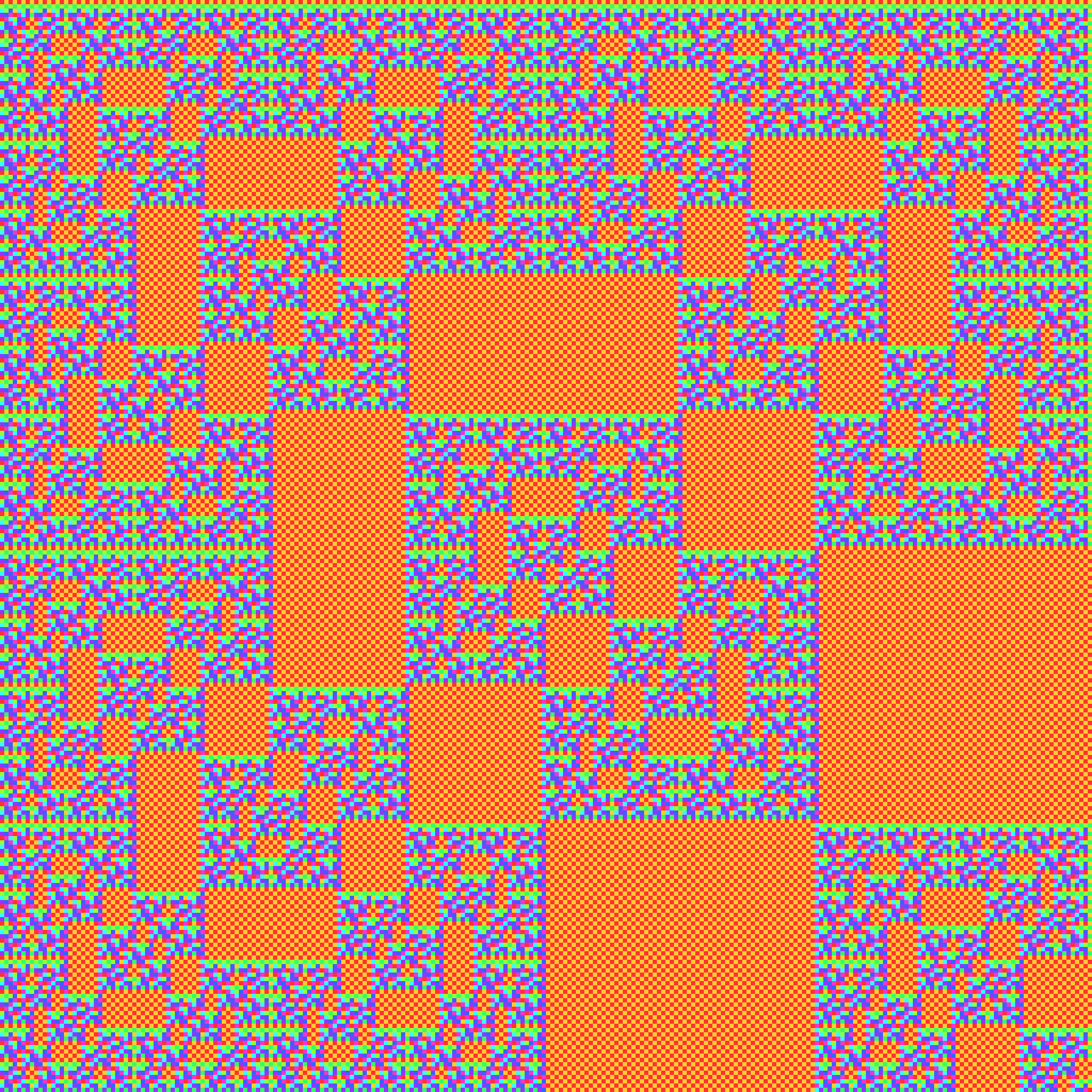} \hspace{.3cm}\includegraphics[width=6.8cm]{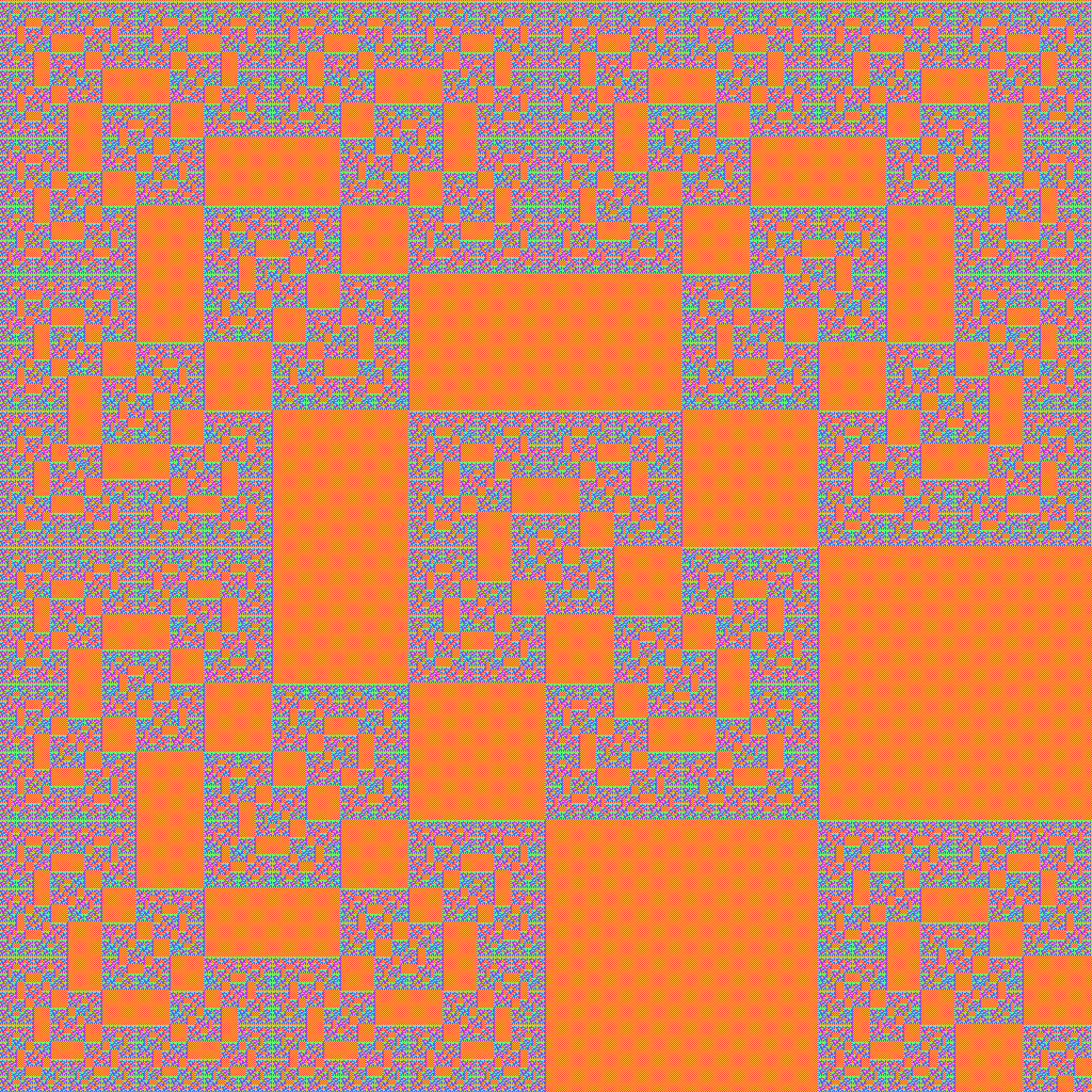}

\caption{The anti-torus afforded by Corollary~\ref{cor:first-fractal-example}, with the same conventions as in Figure~\ref{figJW}.}
\label{fig1}
\centering
\end{figure}

Our study of that example led us to several discoveries, some of which are presented in \cite{CV_chains}. In the rest of this introduction, we focus on another aspect, namely the description of  a new infinite family of irreducible BMW groups admitting fractal anti-tori. This family is indexed by a tuple  $\mathcal A = (V_1, V_2, a, b, c, d)$ consisting of a pair $V_1, V_2$ of finite-dimensional vector spaces over the same finite field $F$, together with  linear maps $a \colon V_1 \to V_1$, $b \colon V_2 \to V_1$, $c\colon V_1 \to V_2$ and $d \colon V_2 \to V_2$. To the tuple $\mathcal A = (V_1, V_2, a, b, c, d)$, we associate a finitely presented group $\Gamma_{\mathcal A}$ defined as follows:
$$\Gamma_{\mathcal A} = \langle V_1 \sqcup V_2 \mid v_1\cdot (cv_1 + d v_2) = v_2 \cdot (av_1 + bv_2) \  \forall (v_1, v_2)  \in V_1\oplus  V_2\rangle,$$
where the   symbol $\cdot$ denotes the group multiplication in $\Gamma_{\mathcal A}$. 
We warn the reader that we use the same symbol for vectors in the vector spaces $V_1$ and $V_2$, and for the generators of $\Gamma_{\mathcal A}$. This makes the notation lighter although it is a slight abuse (note that the zero vectors in $V_1$ and in $V_2$ correspond respectively to two different elements of infinite order in $\Gamma_{\mathcal A}$). It should not cause any confusion. 

We use the matrix notation $\left(\begin{matrix} a & b \\ c & d \end{matrix}\right)$ to denote the linear operator on  $V_1 \oplus V_2$ mapping $(v_1, v_2) $ to $(av_1+bv_2, cv_1+d v_2)$.
We shall assume throughout that the linear maps $a$, $d$ and $\left(\begin{matrix} a & b \\ c & d \end{matrix}\right)$ are invertible. This ensures that    $\Gamma_{\mathcal A}$ is a BMW group of degree $(2|V_1|, 2|V_2|)$, see Section~\ref{sec:bireversible}. We let $T_1, T_2$ be the associated trees. As mentioned above, the $2$-cells in the square complex $T_1 \times T_2$   are labelled by the defining relators of $\Gamma_{\mathcal A}$, which are naturally in bijection with the direct sum $V_1 \oplus V_2$. 
The key feature of   $\Gamma_{\mathcal A}$ is that, under a suitable hypothesis on $a, b, c, d$, it contains fractal anti-tori (see Theorem~\ref{thm:main} below). In order to specify the appropriate notion of fractal arising here, we need  the following terminology. 

Given a finite set $V$, a map $\mathcal T \colon \ZZ^2 \to V$ is viewed as a tiling of the Euclidean plane by unit squares, where each square is colored by an element of $V$ according to the map $\mathcal T$.  We say that $\mathcal T$ is a tiling with \textbf{color set} $V$. Given any integer $\ell \geq 1$, we may also view $\mathcal T$ as a tiling by squares of size $\ell$: the tiles are then colored by the elements of the function space $V^{\mathcal I^2}$ consisting of all maps $f \colon \mathcal I^2 \to V$, where $\mathcal I = \{0, 1, \dots, \ell-1\}$. We may view $\mathcal I^2$ as a square-shaped subset of $\ZZ^2$, so that each element of $V^{\mathcal I^2}$ defines a tiling of that square with $V$ as a color set. Formally, we define $\mathcal T^\ell \colon \ZZ^2 \to V^{\mathcal I^2}$ by setting 
$$\mathcal T^{\ell}(\balpha)  \colon \mathcal I^2 \to V : \bbeta \mapsto \mathcal T(\ell \balpha + \bbeta).$$
The tiling $\mathcal T^\ell$ is called the \textbf{tiling by blocks of size $\ell$} determined by $\mathcal T$. 

We say that $\mathcal T$ is  \textbf{invariant under a substitution} of length $\ell   \geq 2$   if there  is a map $\mathcal S \colon V \to V^{\mathcal I^2} : v \mapsto (\mathcal S_v \colon \mathcal I^2 \to V)$, such that 
$$\mathcal T(\ell \balpha + \bbeta) = \mathcal S_{\mathcal T(\balpha)}(\bbeta)$$
for all $\balpha \in \ZZ^2$ and $\bbeta \in \mathcal I^2$.  We say that $\mathcal T$ is  \textbf{generated by  a substitution} of length $\ell$ if there is a tiling $\overline{\mathcal T} \colon \ZZ^2 \to W$ with color set $W$ that is invariant under a substitution of length $\ell$, and a map $\tau \colon W \to V$, such that $\mathcal T = \tau \circ \overline{\mathcal T}$. The fact that tilings invariant under (or generated by) substitutions give rise to fractals is well known, see \cite{ShSt}.

\begin{thm}\label{thm:main}
If 
$$\det\left(\begin{matrix} 1-ax & bx \\ cy & 1-dy \end{matrix}\right) \neq \det(1-ax) \det(1-dy)$$ 
in the polynomial ring $F[x, y]$, then $\Gamma_{\mathcal A}$ is irreducible, and it contains a fractal anti-torus spanned by a pair $(v_1, v_2) \in V_1 \oplus V_2$. 

More precisely, the natural coloring of the anti-torus spanned by that pair $(v_1, v_2)$ forms a tiling $\mathcal T \colon \ZZ^2 \to V_1 \oplus V_2$ that  is generated by a substitution of length $p = \mathrm{char}(F)$.
Moreover there exist $s\geq 0$ and $t \geq 1$ such that the tiling $\mathcal T^{p^s}$ by blocks of size $p^s$ determined by $\mathcal T$ is invariant under a substitution of length $p^t$. 
\end{thm}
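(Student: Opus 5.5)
The plan is to turn the flat spanned by a pair $(v_1,v_2)$ into a two-dimensional linear recurrence over $F$, and then read off both aperiodicity and the substitution structure from generating functions. In $T_1\times T_2$ a square with bottom edge labelled $h\in V_1$ and left edge labelled $u\in V_2$ has top edge $ah+bu$ and right edge $ch+du$; it is coloured by $(h,u)\in V_1\oplus V_2$. Consider the flat that contains the periodic line labelled $v_1v_1v_1\cdots$ (an axis of the generator $v_1$) and the secant periodic line labelled $v_2v_2\cdots$ (an axis of $v_2$), meeting at a vertex. Because $a$, $d$ and $\left(\begin{smallmatrix} a&b\\ c&d\end{smallmatrix}\right)$ are invertible (bi-reversibility, Section~\ref{sec:bireversible}), the labels on the two axes determine a unique flat filling the whole plane. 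Its colouring $\mathcal T(i,j)=(h_{ij},u_{ij})$ obeys
$$h_{i,j+1}=ah_{ij}+bu_{ij},\qquad u_{i+1,j}=ch_{ij}+du_{ij},$$
with boundary data $h_{i,0}=v_1$ and $u_{0,j}=v_2$. The flat is then the convex hull of the two axes, so it is an anti-torus exactly when it is not periodic. Once we have one, \cite[Proposition~4.11]{Cap_survey} gives irreducibility of $\Gamma_{\mathcal A}$.

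To test periodicity, I would form the generating series $H(x,y)=\sum_{i,j\ge0}h_{ij}x^iy^j$ and $U(x,y)$ on the first quadrant. The recurrence gives the linear system
$$\begin{pmatrix} 1-ay & -by\\ -cx & 1-dx\end{pmatrix}\begin{pmatrix} H\\ U\end{pmatrix}=\begin{pmatrix} v_1/(1-x)\\ v_2/(1-y)\end{pmatrix},$$
up to the convention on which variable goes with which direction, which matches the determinant in the statement. By Cramer's rule, $H$ and $U$ are rational with denominator dividing $\Delta=\det\left(\begin{smallmatrix} 1-ax & bx\\ cy & 1-dy\end{smallmatrix}\right)$ times $(1-x)(1-y)$. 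If the flat were periodic, its stabiliser would contain translations along two independent directions. Using that the quadrant pattern also extends into the other quadrants, I would deduce that the quadrant colouring is eventually periodic along both axes. Hence $H$ and $U$ would have denominator of the form $(1-x^m)(1-y^n)$, i.e.\ a product of a polynomial in $x$ alone and a polynomial in $y$ alone.

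The key step, and the one I expect to be the main obstacle, is choosing $(v_1,v_2)$ so that this fails. The hypothesis $\Delta\neq\det(1-ax)\det(1-dy)$ says that $\Delta$ has a factor genuinely mixing $x$ and $y$. I would show that this factor survives in the reduced denominator of $H$ or $U$ for a suitable boundary pair: either by a generic-vector argument (the set of pairs for which the mixed factor cancels is a proper subspace), or by specialising $y$ to a root of unity and comparing with the one-variable orders of $a$ and $d$. If this is unclear I would fall back on Theorem~\ref{thm:anti-torus}, applied to $\gamma_1,\gamma_2$ equal to $v_1,v_2$ and to a homomorphism to an affine group over $F$, whose elements have $p$-power order. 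But then I would still need coprimality with the local actions, which are $p$-groups only in some cases. So the generating-function route is primary.

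For the fractal structure, the coefficients of a rational power series in $F[[x,y]]$ form a $p$-automatic two-dimensional sequence, by Salon's two-dimensional version of Christol's theorem. Concretely, the $p$-kernel $\{\Lambda_{r,s}H : 0\le r,s<p\}$, where $\Lambda_{r,s}$ extracts the coefficients with exponents $\equiv(r,s)\bmod p$, is stable under the Cartier operators since $H^p(x,y)=H(x^p,y^p)$ twisted by Frobenius on $F$. It spans a finite set of rational series with bounded numerator degree. Taking $W$ to be this finite kernel set and $\tau$ evaluation of the constant term gives a substitution of length $p$ generating $\mathcal T$, and the same argument applied to $U$ handles the $V_2$ component. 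Iterating the finite map on kernel elements is eventually periodic: after $s$ steps it becomes periodic with period $t$. Grouping into blocks of size $p^s$ then turns the kernel-indexed tiling into $\mathcal T^{p^s}$ itself, which is invariant under a substitution of length $p^t$. The whole-plane tiling reduces to the four quadrants, each treated the same way.
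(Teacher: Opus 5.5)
You have a genuine gap at the step you yourself flag: producing a pair $(v_1,v_2)$ whose flat is non-periodic. Your reduction is fine. A periodic flat is stabilised by a finite-index lattice of label-preserving translations, so on the first quadrant $(H,U)$ would be a polynomial vector divided by $(1-x^N)(1-y^N)$. What is missing is a reason why the pairs for which the mixed factor cancels form a \emph{proper} subspace; that properness is the whole content of the theorem. The root-of-unity specialisation is not developed, and the fallback through Theorem~\ref{thm:anti-torus} stalls on the coprimality condition, as you note.

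The paper's idea is to reduce modulo a mixed irreducible factor $f$ of the determinant and work over $K=\mathrm{Frac}(F[x,y]/(f))$, where $x$ and $y$ are transcendental. There the block matrix has a nonzero left null vector $(\phi_1,\phi_2)$, which is exactly what makes $v_1\mapsto(\phi_1(v_1),y)$, $v_2\mapsto(\phi_2(v_2),x)$ a homomorphism $\Gamma_{\mathcal A}\to K\rtimes K^*$. Choosing $\phi_i(v_i)\neq 0$ and $v_{3-i}=0$, Lemma~\ref{lem:affine-group} shows that no positive powers of $v_1,v_2$ commute. Proposition~\ref{prop:anti-torus} then gives the anti-torus; its condition (1) needs no finite quotient, so the coprimality issue disappears.

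Your generating-function route can be completed with the same input, quite cheaply. Let $M$ be your coefficient matrix and work in the discrete valuation ring $F[x,y]_{(f)}$. Since $\det(M^{-1})=(\det M)^{-1}$ has negative $f$-adic valuation, some column of $M^{-1}$, say the $k$-th, has an entry of negative valuation. Take $(v_1,v_2)$ to be the $k$-th basis vector of $V_1\oplus V_2$, so one of $v_1,v_2$ is zero, as in the paper. Then $(H,U)$ is that column divided by $1-x$ or $1-y$, both prime to $f$, so $f$ survives in the denominator. On the other hand $f\nmid(1-x^N)(1-y^N)$, because $f$ is irreducible and lies in neither $F[x]$ nor $F[y]$. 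Done this way, your argument shows directly that the flat $P_{v_1,v_2}$ is non-periodic, bypassing Proposition~\ref{prop:anti-torus}. The paper instead packages the same linear algebra as a representation into an affine group and reuses its general criterion.

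Using Salon's two-dimensional Christol theorem for the fractal part is a legitimate substitute for Proposition~\ref{prop:autom-seq}. The block statement, however, needs a sharper formulation. What is eventually periodic is the sequence of partitions of the colour set into classes of colours producing the same $p^k$-block. Each partition is determined by the previous one through the substitution. This yields $s\geq 0$ and $t\geq 1$ with equal partitions at levels $s$ and $s+t$, and hence a substitution of length $p^t$ on $\mathcal T^{p^s}$. It is not true in general that a block of size $p^s$ recovers the kernel colour.
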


In other words, the presentation complex of $\Gamma_{\mathcal A}$, viewed as a  square complex whose squares are colored by the elements of $V_1 \oplus V_2$, contains flat subcomplexes that are fractal aperiodic tilings of the plane. 

The proof of Theorem~\ref{thm:main} relies on a construction of fractal tilings by matrices satisfying a recurrence relation, established in a separate paper \cite{CapVas} and recalled in Proposition~\ref{prop:autom-seq} below. Observe the similarity between the anti-torus depicted on Figure~\ref{fig1} and the fractal tilings shown in \cite[Figures~1, 2]{CapVas}. 

In the one-dimensional case, i.e. if $V_1 = V_2 = F$, the conditions imposed on $\mathcal A$ simply mean that the scalars $a, b, c, d$ and $ad-bc$ are all non-zero. The irreducibility of $\Gamma_{\mathcal A}$ in that special case may also be deduced from the work of Skipper--Steinberg~\cite{SkiSte} (see also \cite{BDR16} that corresponds to a special case over $F= \FF_3$), using the relation between BMW groups and bi-reversible automata established by Glasner--Mozes~\cite{GM} (see  Section~\ref{sec:bireversible} below). The existence of a (fractal)  anti-torus  is new.

In case $V_1 = F$ and $V_2 = F \oplus F$  and the maps $a, b, c, d$ are defined by $a(x) = x$, $b(y, z) = z$, $c(x) = (x, x)$ and $d(y, z) =(y, y+z)$,  the irreducibility of $\Gamma_{\mathcal A}$ may be deduced from the work of Francoeur~\cite{Fran}. Here again, the existence of an anti-torus  is new.

Taking $F = \FF_2$, $V_1 = V_2 = F^2$, $a = d = \left(\begin{matrix} 1& 1 \\ 0 & 1 \end{matrix}\right)$, $b = \left(\begin{matrix} 1& 0 \\ 0 & 1 \end{matrix}\right)$ and $c= \left(\begin{matrix} 1& 1 \\  1& 0 \end{matrix}\right)$, one obtains an irreducible BMW group that is closely related to the example from Corollary~\ref{cor:first-fractal-example}: indeed, it has a homomorphic image in the subgroup of $\Gamma_0$ generated by index two subgroups of $\langle a, b\rangle$ and $\langle x, y, z, t\rangle$ consisting of the elements of even length, see Remark~\ref{rem:Gamma0} below. In that way, Theorem~\ref{thm:main} provides an explanation for the fractal feature of the anti-torus depicted in Figure~\ref{fig1}. 

D.~Savchuk and T.~Smirnova-Nagnibeda  have  pointed out to us that the anti-torus depicted in Figure~2 looks similar to the picture appearing in \cite[Figure~9]{GriSav}, which is associated with the bi-reversible automaton $\mathcal C$ defined in \cite[Figure~7]{KPS}, and considered in Example~\ref{ex:KPS2} below. This is not a coincidence: indeed,  the BMW group $\Gamma_{\mathcal C}$ associated with $\mathcal C$ as in Corollary~\ref{cor:birev-non-torsion}  happens to have the BMW group $\Gamma_0$ from Corollary~\ref{cor:first-fractal-example} as a quotient. Hence, in view of the previous paragraph, Theorem~\ref{thm:main} also yields a conceptual explanation for the self-similarity of the picture in  \cite[Figure~9]{GriSav}.

\medskip
The fractal tilings afforded by Theorem~\ref{thm:main} are examples of \textit{$2$-dimensional automatic sequences}, as defined in \cite[Chapter~14]{AlSh}. Theorem~\ref{thm:main} may thus be viewed as providing a link between the bi-reversible automata defined by $\mathcal A$ and automatic sequences. Another connection between self-similar groups and automatic sequences has been highlighted in \cite{GLNS}. Many other links between self-similar groups and fractals have been identified and studied in the literature, see \cite{BGN, Nekra}. Note however that the automata mostly studied  in those references  in relation with fractals satisfy the condition of being \textit{contracting}, a condition that is somewhat orthogonal to bi-reversibility: indeed, as pointed to us by L.~Bartholdi and by T.~Smirnova-Nagnibeda, a group defined by a Mealy automaton that is both bi-reversible and contracting is necessarily finite\footnote{There does not seem to be any published proof of this   result at the time of this writing.}. 

%Our study of the example $\Gamma_0$ from Corollary~\ref{cor:first-fractal-example}, and the family $\Gamma_{\mathcal A}$ from Theorem~\ref{thm:main}, has led us to discover other interesting phenomena, see \cite{CV_chains}.  

\subsection*{Acknowledgement}

We thank Laurent Bartholdi for drawing our attention to the work of Skipper--Steinberg~\cite{SkiSte} and Francoeur~\cite{Fran}. We are grateful to L.~Bartholdi, M.~Burger, S.~Mozes, R.~Skipper and T.~Smirnova-Nagnibeda for their comments on a preliminary version of this article; we thank D. Savchuk and T.~Smirnova-Nagnibeda  for pointing out the similiarity between Figure~2 above and \cite[Figure~9]{GriSav}.

\section{Finding anti-tori via quotients}\label{sec:anti-tori}

We first record a basic  general fact.

\begin{prop}\label{prop:infinite-order}
For $i=1, 2$, let $G_i$ be a locally compact group and $U_i \leq G_i$  a compact open subgroup. Let $\Gamma \leq G_1 \times G_2$ be a discrete subgroup, let $\gamma_1 \in \Gamma \cap (G_1 \times U_2)$ and $\gamma_2 \in \Gamma \cap (U_1 \times G_2)$. 

Suppose that the projection of $\gamma_1$ to $G_2$ is of finite order. Then there exist positive integers $m_1, m_2 >0$ such that $\gamma_1^{m_1}$ and $\gamma_2^{m_2}$ commute. 
\end{prop}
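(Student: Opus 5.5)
Write $\gamma_1 = (g_1, u_2)$ with $g_1 \in G_1$ and $u_2 \in U_2$, and $\gamma_2 = (u_1, g_2)$ with $u_1 \in U_1$ and $g_2 \in G_2$. Since $u_2$ has finite order, say $k$, we may replace $\gamma_1$ by $\gamma_1^{k}$. This reduces us to the case where $\gamma_1 = (g_1, 1)$ lies in $\Gamma \cap (G_1 \times \{1\})$. The strategy is then to show that a suitable power of $\gamma_2$ centralizes this element. The key idea is that conjugation by powers of $\gamma_2$ acts on $\gamma_1$ only through the first coordinate $u_1$, which lives in a compact group. Discreteness of $\Gamma$ should then force finitely many conjugates.

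Concretely, for each $n \in \ZZ$ consider
$$\gamma_2^{n}\gamma_1\gamma_2^{-n} = (u_1^{n} g_1 u_1^{-n}, 1).$$
All of these elements lie in $\Gamma$. Moreover, they lie in the set $K \times \{1\}$, where $K = \{u g_1 u^{-1} : u \in U_1\}$. The set $K$ is compact, being the image of the compact group $U_1$ under a continuous map. Since $\Gamma$ is discrete, $\Gamma \cap (K \times \{1\})$ is a discrete subset of a compact set, hence finite. Therefore the conjugates $\gamma_2^n \gamma_1 \gamma_2^{-n}$, for $n \in \ZZ$, take only finitely many values.

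By pigeonhole there exist $n < n'$ with $\gamma_2^{n}\gamma_1\gamma_2^{-n} = \gamma_2^{n'}\gamma_1\gamma_2^{-n'}$. Setting $m_2 = n' - n > 0$, this says that $\gamma_2^{m_2}$ commutes with $\gamma_1$, where $\gamma_1$ now denotes the power $\gamma_1^{k}$ of the original element. Taking $m_1 = k$ gives the required pair of exponents.

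There is no serious obstacle here. The only point needing care is justifying that the conjugates land in a compact set. For this we need both that the second coordinate is killed, which is exactly why we first pass to $\gamma_1^k$, and that conjugation by an element of the compact group $U_1$ keeps $g_1$ inside a compact set. Note that the hypothesis $\gamma_2 \in U_1 \times G_2$ is used precisely to make $u_1$ lie in $U_1$; the same argument works for any compact subgroup.
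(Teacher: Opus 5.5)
Your proof is correct, and it takes a more direct route than the paper's.

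\textbf{The paper's route.} The paper first replaces $G_i$ by $\overline{p_i(\Gamma)}$. It then shows that $p_1(\Gamma\cap\ker p_2)$ is a discrete subgroup normalized by $p_1(\Gamma)$, hence, since normalizers of closed subgroups are closed, a discrete normal subgroup of $G_1$. Consequently the full $G_1$-conjugacy class of $p_1(\gamma_1^{m_1})$ is discrete and its $U_1$-conjugacy class is finite. So a finite-index subgroup $U_1'\leq U_1$ centralizes $p_1(\gamma_1^{m_1})$, and some power of $p_1(\gamma_2)$ falls into $U_1'$.

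\textbf{Your route.} You skip the dense-projection reduction and the normality argument entirely. You conjugate only by powers of $\gamma_2$, observe that this keeps $\gamma_1^k$ inside the finite set $\Gamma\cap(K\times\{1\})$, and conclude by pigeonhole.

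\textbf{Comparison.} The paper's argument yields a structural fact of independent interest: the discreteness of the whole $\overline{p_1(\Gamma)}$-conjugacy class of $p_1(\gamma_1^{m_1})$, a standard tool for lattices in products. It also gives an exponent $m_2$ that can be chosen uniformly for all $\gamma_2\in\Gamma\cap(U_1\times G_2)$. Your argument recovers this uniformity as well. The group $\Gamma\cap(U_1\times G_2)$ acts by conjugation on the finite set $\Gamma\cap(K\times\{1\})$, so the stabilizer of $\gamma_1^k$ has finite index. Your version also makes clear what is actually needed: compactness of $U_1$ and discreteness of $\Gamma$. Neither $U_2$ (beyond the finite-order hypothesis on $p_2(\gamma_1)$), nor the openness of $U_1$, nor local compactness plays any role.

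\textbf{One wording fix.} A discrete subset of a compact set need not be finite, so that sentence needs an extra step. The point is that a discrete subgroup of a Hausdorff topological group is closed. Hence $\Gamma\cap(K\times\{1\})$ is compact and discrete, and therefore finite.
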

\begin{proof}
Let $p_i \colon G_1 \times G_2 \to G_i$ be the canonical projection. Upon replacing $G_i$ by $\overline{p_i(\Gamma)}$, we may and shall assume that $\Gamma$ has dense projections. 

By hypothesis, there exists $m_1>0$ such that $\gamma_1^{m_1} \in \ker(p_2)$. Set $N = \Gamma \cap \ker(p_2)$. Since $N$ is normal in $\Gamma$, its projection $p_1(N)$ is normalized by $p_1(\Gamma)$. Since $\Gamma$ is discrete in $G_1 \times G_2$, it follows that $N$ is discrete. Since $N$ is contained in $p_1(N) \times \{e\}$, it follows that $p_1(N)$ is a discrete subgroup of $G_1$. In particular the normalizer $N_{G_1}(p_1(N))$ is closed, hence it contains $\overline{p_1(\Gamma)}=G_1$. This shows that $p_1(N)$ is a discrete normal subgroup of $G_1$. Thus the conjugacy class of $p_1(\gamma_1^{m_1})$ in $G_1$ is discrete. In particular, the $U_1$-conjugacy class of $p_1(\gamma_1^{m_1})$ is finite. It follows that $U_1$ has an open subgroup of finite index, say $U'_1$, that commutes with $p_1(\gamma_1^{m_1})$. Since $p_1(\gamma_2) \in U_1$,  there exists $m_2 >0$ such that $p_1(\gamma_2)^{m_2} \in U'_1$. It follows that $p_1(\gamma_1^{m_1})$ commutes with $p_1(\gamma_2^{m_2})$. Since $p_2(\gamma_1^{m_1} )$ is trivial, it commutes with $p_2(\gamma_2^{m_2})$. We infer that $\gamma_1^{m_1}$ and $\gamma_2^{m_2}$ commute.
\end{proof}

Our proof of Theorem~\ref{thm:anti-torus} is based on the following criterion. 

\begin{prop}\label{prop:anti-torus}
Let $T_1, T_2$, $\Gamma$, $x_1, x_2$ and $\gamma_1, \gamma_2$ be as in the notation of Theorem~\ref{thm:anti-torus}.  Let also $n_i$ be the least common multiple of the orders of the local actions of $\Gamma$ at the vertices of $T_i$, for $i=1, 2$. The following conditions are equivalent. 
\begin{enumerate}[label=(\arabic*)]
\item For all $m_1, m_2 > 0$, the elements $\gamma_1^{m_1}$ and $\gamma_2^{m_2}$ do not commute.

\item For all $k\geq 0$, the elements $\gamma_1^{n_1^k}$ and $\gamma_2^{n_2^k}$ do not commute.
\end{enumerate}
	
Moreover, if those conditions hold, then $T_1 \times T_2$ contains an anti-torus spanned by powers of $\gamma_1$ and $\gamma_2$. In particular  $\Gamma$ is irreducible, and $\gamma_i$ acts as an automorphism of infinite order on $T_i$ for $i=1, 2$.
\end{prop}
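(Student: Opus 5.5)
The implication (1)$\Rightarrow$(2) is immediate, since (2) is the special case $m_i = n_i^k$. So the work lies in (2)$\Rightarrow$(1) and in the ``moreover'' part. Both rest on one elementary observation, which I would establish first.

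Since $\gamma_1 \in \Gamma_{x_1}$, its action on the ball $B_k(x_1)\subset T_1$ lies in an iterated extension of local actions of $\Gamma$ at vertices of $B_{k-1}(x_1)$. Hence the order of that action divides $n_1^k$, and $\gamma_1^{n_1^k}$ fixes $B_k(x_1)$ pointwise. Symmetrically, $\gamma_2^{n_2^k}$ fixes $B_k(x_2)\subset T_2$ pointwise. Thus these specific powers fix arbitrarily large balls, and this is what lets them replace arbitrary powers.

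For the ``moreover'' part I would argue as follows, assuming (1).
\begin{enumerate}
\item By the contrapositive of Proposition~\ref{prop:infinite-order} (with $U_i$ the vertex stabilizers, which are compact open), $\gamma_1$ acts with infinite order on $T_2$, so it is hyperbolic there with axis $\ell_1\subset T_2$. Likewise $\gamma_2$ is hyperbolic on $T_1$ with axis $\ell_2$. This already gives that $\gamma_i$ has infinite order on the appropriate tree.
\item Choose $k$ larger than $d(x_1,\ell_2)$ and $d(x_2,\ell_1)$. Then $\gamma_1^{n_1^k}$ fixes a vertex $y_1\in\ell_2$ and translates along $\ell_1$, so $\{y_1\}\times\ell_1$ is an axis of $\gamma_1^{n_1^k}$ in the flat $P=\ell_2\times\ell_1$. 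Symmetrically, $\gamma_2^{n_2^k}$ has an axis $\ell_2\times\{y_2\}\subset P$. Hence $P$ is the convex hull of two secant periodic lines.
\item If $P$ were periodic, its stabilizer $\Gamma_P$ would be virtually $\ZZ^2$, acting on $P$ by translations up to finite index. Both $\gamma_1^{n_1^k}$ and $\gamma_2^{n_2^k}$ preserve $P$, so suitable further powers would lie in a common free abelian subgroup and commute, contradicting (1). So $P$ is an anti-torus, and irreducibility follows from Wise's criterion \cite[Proposition~4.11]{Cap_survey}.
\end{enumerate}

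For (2)$\Rightarrow$(1), I would argue by contraposition: suppose $g=\gamma_1^{m_1}$ and $h=\gamma_2^{m_2}$ commute.

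If $\gamma_1$ is elliptic on $T_2$, then Proposition~\ref{prop:infinite-order} should directly produce commuting powers. The issue is that those powers must be of the special form $n_i^k$. I would handle this by noting that some power $\gamma_1^{n_1^k}$ fixes a ball in $T_1$ and a vertex in $T_2$. It is therefore a finite-order element of a finite vertex stabilizer, and the ball-fixing observation is used to force such elements to fix ever larger regions, hence to become trivial by properness. This step is delicate and I am not fully sure it goes through as stated.

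In the hyperbolic case, $h$ preserves $\mathrm{Min}(g)$, so powers of $g$ and $h$ preserve a common flat $\ell_2\times\ell_1$ on which they translate. The aim is to show that the commutator
$$c_k=[\gamma_1^{n_1^k},\gamma_2^{n_2^k}]$$
fixes a vertex of $T_1\times T_2$ together with a large ball around it. In $T_1$, $c_k$ fixes $\mathrm{Fix}(\gamma_1^{n_1^k})\cap\gamma_2^{n_2^k}\mathrm{Fix}(\gamma_1^{n_1^k})$, which contains a long segment of $\ell_2$ because $\gamma_2^{n_2^k}$ translates along $\ell_2$ and fixes $\ell_2$ setwise. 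The analogous statement holds in $T_2$. Properness of the $\Gamma$-action, combined with commensurating information coming from $[g,h]=1$, should then force $c_k=1$ for large $k$.

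I expect this last step to be the main obstacle: converting ``commuting for some powers'' into ``commuting for the $n_i^k$ powers.'' My first attempt would be to show that the finite group generated by the commutators $c_k$ restricted to a fixed ball stabilizes, and that the commutators are trivial on a fixed compact set uniformly in $k$.
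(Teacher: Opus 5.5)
You never prove (2)$\Rightarrow$(1), and that is the decisive direction. Your ``moreover'' argument assumes (1), whereas Theorem~\ref{thm:anti-torus} only ever supplies (2).

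Your hyperbolic-case sketch also fails at a concrete point. All you know about $\mathrm{Fix}_{T_1}(\gamma_1^{n_1^k})$ is that it contains $B_k(x_1)$. But $d(x_1,\gamma_2^{n_2^k}x_1)\geq n_2^k\tau$, where $\tau>0$ is the translation length of $\gamma_2$ on $T_1$. So for large $k$ (when $n_2\geq 2$) the balls $B_k(x_1)$ and $\gamma_2^{n_2^k}B_k(x_1)$ are disjoint, and nothing forces $c_k$ to fix a segment of the axis.

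The missing idea is to make $\gamma_1^{n_1^j}$ fix pointwise an entire $r$-neighbourhood of the axis of $\gamma_2$ in $T_1$, a set that $\gamma_2$ preserves, and symmetrically in $T_2$. The paper obtains this by running the geometric argument under (2) directly. After replacing $\gamma_i$ by $\gamma_i^{n_i^j}$ so that each fixes a vertex on the other's axis, suppose the flat $P$ is periodic. Then:
\begin{itemize}
\item a finite-index subgroup $L_0\leq\mathrm{Stab}_\Gamma(P)$ acts on $P$ by translations with finite kernel $K$, and $K$ acts faithfully on some $r$-ball about a vertex of $P$;
\item a pigeonhole argument in the finite vertex stabilizers puts positive powers of $\gamma_1$ and $\gamma_2$ into $L_0$;
\item hence the $\langle\gamma_1\rangle$-orbit of the axis of $\gamma_2$ in $T_1$ is finite, and counting orbits on balls about the fixed vertex shows that some $\gamma_1^{n_1^j}$ fixes the $r$-neighbourhood of that axis pointwise.
\end{itemize}
Then $[\gamma_1^{n_1^j},\gamma_2^{n_2^j}]\in K$ fixes an $r$-ball, so it is trivial, contradicting (2). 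Hence $P$ is an anti-torus, and (1) follows from the Flat Torus Theorem. Your contrapositive framing could also be completed, since commuting powers make $P$ periodic, but only via this same argument.

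In your elliptic case, what becomes trivial is not $\gamma_1^{n_1^k}$, which may be a nontrivial element acting trivially on $T_1$. It is the commutator $[\gamma_1^{n_1^k},\gamma_2^{n_2^k}]$. This lies in the finite kernel of the $\Gamma_{x_1,y_2}$-action on $T_1$ and fixes pointwise a large ball about $y_2$, on which that kernel acts faithfully.

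Step~3 of your ``moreover'' part also contains a false claim. $\gamma_1^{n_1^k}$ fixes one vertex of the axis of $\gamma_2$ in $T_1$, but it has no reason to stabilize that axis, so it need not preserve $P$. If $\gamma_1^{n_1^k}$ and $\gamma_2^{n_2^k}$ both preserved $P$, they would generate a group acting cocompactly on it, and $P$ could never be an anti-torus. Assuming $P$ periodic, you must first derive via the pigeonhole step that some positive powers lie in $\mathrm{Stab}_\Gamma(P)$. Then use that in a finite-by-$\ZZ^2$ group suitable further powers commute.

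Finally, Proposition~\ref{prop:infinite-order}, read with the correct indices, gives infinite order of $\gamma_1$ on $T_1$ (the tree where it fixes $x_1$), not on $T_2$. Hyperbolicity on $T_2$ needs the extra remark that an element elliptic on both factors lies in a finite double stabilizer.
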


\begin{proof}
It is clear that (1) implies (2).  
%
%We assume henceforth that (1) fails, say $\gamma_1^{m_1}$ and $\gamma_2^{m_2}$ commute for some $m_1, m_2>0$. We shall prove by contradiction that (2) also fails. 
%
Let us now assume that (2) holds.  %Let $n_1$ (resp. $n_2$) be the least common multiple of the orders of the local actions of $\Gamma$ at the vertices of $T_2$ (resp. $T_1$). Hence, we assume that for all $k\geq 0$ and all positive integers $d_1,d_2$, respectively dividing $n_1^k$ and $n_2^k$, the elements  $\gamma_1^{d_1}$ and $\gamma_2^{d_2}$  do not commute.

Observe that for all $k \geq 0$, the element $\gamma_i^{n_i^k}$ acts trivially on the $k$-ball around $x_{i}$ in $T_{i}$. 
Conversely, for each $k \geq 0$, the order of the action of the element $\gamma_i$    on the $k$-ball around $x_{i}$ in $T_{i}$   divides $n_i^k$.  

We claim that $\gamma_i$ acts as a hyperbolic isometry on $T_{3-i}$, for $i=1, 2$. Indeed, suppose for a contradiction that $\gamma_1$ is not a hyperbolic isometry of $T_2$. Then it fixes some point $y_2 \in T_2$. By hypothesis $\gamma_1$ fixes $x_1 \in T_1$. Since the $\Gamma$-action on $T_1 \times T_2$ is proper,  the double stabilizer $\Gamma_{x_1, y_2}$ is finite. It follows that $\gamma_1$ is of finite order; in particular the order of the projection of $\gamma_1$ to $T_1$ is finite. By the previous paragraph, this implies that  there is $k \geq 0$ such that $\gamma_1^{n_1^k}$ acts trivially on $T_1$. In particular, for any integer $m$, the commutator $[\gamma_1^{n_1^k}, \gamma_2^m]$ acts trivially on $T_1$. 

Let $K_{x_1, y_2} \leq \Gamma_{x_1, y_2}$ be the kernel of the $\Gamma_{x_1, y_2}$-action  on $T_1$. We have just seen that $[\gamma_1^{n_1^k}, \gamma_2^m] \in K_{x_1, y_2} $ for all $m$. Moreover, by definition $K_{x_1, y_2}$ is a finite group that acts faithfully on $T_2$. Hence  $K_{x_1, y_2}$ acts faithfully on the $r$-ball around $y_2$ in $T_2$, for some sufficiently large $r>0$. Let $t = d(x_2, y_2)$ be the distance from $x_2$ to $y_2$ in $T_2$. Since $\gamma_2$ fixes $x_2$, we  may assume, upon enlarging $k$ if necessary,  that $\gamma_2^{n_2^k}$ fixes pointwise the $(r+t)$-ball around $x_2$. In particular $\gamma_2^{n_2^k}$ fixes pointwise the $r$-ball around $y_2$. It follows that the commutator  $[\gamma_1^{n_1^k}, \gamma_2^{n_2^k}]  \in K_{x_1, y_2}$ acts trivially on the $r$-ball around $y_2$, hence trivially on $T_2$ by the choice of $r$. We infer that $[\gamma_1^{n_1^k}, \gamma_2^{n_2^k}]$ is trivial, which contradicts (2). This proves that $\gamma_1$ acts as a hyperbolic isometry on $T_2$. A similar argument shows that  $\gamma_2$ acts as a hyperbolic isometry of $T_1$. The claim stands proven.

For $i=1, 2$, we let $\ell_{3-i} \subset T_{3-i}$ be the unique translation axis of $\gamma_i$ in the tree $T_{3-i}$. Thus the translation axes of $\gamma_1$ in the product $T_1 \times T_2$ are of the form $\{y_1\} \times \ell_2$, where $y_1 \in T_1$ is a fixed point  of $\gamma_1$ (and similarly for $\gamma_2$). 

For each $j \geq 0$, the pair  $(\gamma_1^{n_1^j}, \gamma_2^{n_2^j})$ also satisfies (2).  
%Moreover, by our assumption that (1) fails, we know that $\gamma_1^{m_1}$ and $\gamma_2^{m_2}$ commute, so that $\gamma_1^{n_1^km_1}$ and $\gamma_2^{n_2^k m_2}$ also commute. 
Therefore, there is no loss of generality in replacing $\gamma_i$ by $\gamma_i^{n_i^j}$. Upon  choosing $j$ large enough, we may  assume that $\gamma_1$ fixes a vertex $x'_1 \in \ell_1$ and $\gamma_2$ fixes a vertex $x'_2 \in \ell_2$. This ensures that $\gamma_1$ has a translation axis of the form $\{x'_1\} \times \ell_2$   and $\gamma_2$  has a translation axis of the form $\ell_1 \times \{x'_2\}$, with $x'_i \in \ell_i$ for $i=1, 2$. Let $P \subset T_1 \times T_2$ be the flat spanned by the geodesic lines $\ell_1 \times \{x'_2\}$ and $ \{x'_1\} \times \ell_2 $. Let also $L =\mathrm{Stab}_\Gamma(P)$. 

\medskip
Observe that, by the Flat Torus Theorem (see \cite[Chapter II.7]{BH}), if $\gamma_1^{m_1}$ and $\gamma_2^{m_2}$ commuted for some $m_1, m_2>0$, then $P$ would be periodic.

\medskip 
We next claim that   $P$   is not periodic, so that it forms an anti-torus. By  the previous paragraph, this implies that (1) holds, as required. 
Suppose for a contradiction that $L$ acts cocompactly on $P$. By Bieberbach's theorem, it follows that $L$ has a finite index subgroup $L_0$  acting (cocompactly) by translations on $P$. The kernel of the $L_0$-action on $P$ is a finite normal subgroup $K$. (Thus $L_0$ is virtually abelian, but it need not be abelian a priori.) Since $K$ fixes the vertex $(x'_1, x'_2)$, it acts on the balls around that vertex in $T_1 \times T_2$. Since $K$ is finite, it follows for any  sufficiently large $r >0$, the $K$-action on the $r$-ball around $(x'_1, x'_2)$ is faithful.  

We next claim that some positive power of $\gamma_1$ (resp. $\gamma_2$) belongs to $L_0$. Indeed, the $L_0$-action on $P$ preserves the square tiling induced by the graph structure on $T_1 \times T_2$. This implies that $L_0$ contains a non-trivial element $t_1$ acting as a translation with axis $\{x'_1\} \times \ell_2$. Considering the least common multiple of the respective translation lengths of $\gamma_1$ and $t_1$, we find infinitely many distinct pairs of non-zero integers $(k_i, l_i)$ such that $\gamma_1^{k_i} t_1^{l_i}$ fixes the vertex $(x'_1, x'_2)$. Since the $\Gamma$-action is proper, the vertex-stabilizers are finite. By the pigeonhole principle, we may find $i, j$ with $k_i < k_j$ and $\gamma_1^{k_i} t_1^{l_i} = \gamma_1^{k_j} t_1^{l_j}$. It follows that $\gamma_1^{k_j-k_i}$ is a positive power of $\gamma_1$ that belongs to $L_0$, as claimed. The argument for $\gamma_2$ is similar. 

By construction $\gamma_1$ preserves the lines $\ell_2 \subset T_2$. The previous claim implies moreover that the orbit of the line $\ell_1 \subset T_1$ under the cyclic group $\langle \gamma_1 \rangle$ is finite. By the orbit counting formula, the size of that orbit divides $n_1^k$ for some $k$. It follows that there exists $k \geq 0$ such that $\gamma_1^{n_1^k}$ fixes $\ell_1$ pointwise. Hence for each $m \geq 0$, there exists an integer $k(m)$ such that  $\gamma_1^{n_1^{k(m)}}$ fixes pointwise the $m$-neighbourhood of $\ell_1$ in $T_1$. Similarly, for each $m \geq 0$, there exists an integer $k'(m)$ such that $\gamma_2^{n_2^{k'(m)}}$ fixes pointwise the $m$-neighbourhood of $\ell_2$ in $T_2$. Choose $j > \max\{k(r), k'(r)\}$, where $r$ is as above. 

We infer that $\gamma_1^{n_1^j}$ and $\gamma_2^{n_2^j}$ both belongs to $L_0$. Moreover their  commutator 
$[\gamma_1^{n_1^j}, \gamma_2^{n_2^j}]$ is an element of $K$ that fixes pointwise the $r$-ball around $\ell_1$ in $T_1$, as well as the $r$-ball around $\ell_2$ in $T_2$. In particular $[\gamma_1^{n_1^j}, \gamma_2^{n_2^j}]$ fixes pointwise the $r$-ball around $(x'_1, x'_2) \in \ell_1 \times \ell_2$. Hence it is trivial by the definition or $r$. 

This proves that the elements $\gamma_1^{n_1^j}$ and $\gamma_2^{n_2^j}$ commute, contradicting the assumption that (2) holds. This confirms the claim that the flat $P$ is an anti-torus spanned by powers of $\gamma_1$ and $\gamma_2$, hence also that (1) holds as observed above. 

It follows from Proposition~\ref{prop:infinite-order} that $\gamma_i$ acts as an automorphism of infinite order on $T_i$. The irreducibility of $\Gamma$  follows from \cite[Prop.~4.11]{Cap_survey}.
\end{proof}

\begin{proof}[Proof of Theorem~\ref{thm:anti-torus}]
Let $n_i$ be as in Proposition~\ref{prop:anti-torus}. It follows from the hypothesis (ii) that for all $k \geq 0$, the cyclic groups generated by $\varphi(\gamma_i)$ and $\varphi(\gamma_i^{n_i^k})$ coincide. In view of the hypothesis (i), those cyclic groups do not commute. Hence for all $k \geq 0$, the elements $\gamma_1^{n_1^k}$ and $\gamma_2^{n_2^k}$ do not commute. Thus the required conclusions all follow from Proposition~\ref{prop:anti-torus}.
\end{proof}

\begin{proof}[Proof of Corollary~\ref{cor:anti-tori:arithmetic}]
Let $\Pi$ be the set of prime divisors of the orders of the local actions of $\Gamma$ as the vertices of $T_1$ and $T_2$. Since $\Gamma$ acts cocompactly, the set $\Pi$ is finite. 

Upon replacing $\Gamma$ be a finite index subgroup, we may assume that the first congruence quotients of $\Gamma$ are quasi-simple. Let $\varphi \colon \Gamma \to G$ be a congruence quotient of $\Gamma$ associated with a prime ideal $I$ of   index $q$ in the ring of $S$-integers of the global field on which $\Gamma$ is realized as an $S$-arithmetic group. Upon choosing the index large $q$ enough, we may assume, by strong approximation, that $\varphi(\Gamma_1) = G = \varphi(\Gamma_2)$. 

The group  $G$ is a quasi-simple group of Lie type over a finite field of order $q = p^e$ and characteristic $p$. We claim that, upon choosing $q$ large enough, there is a prime divisor $s$ of the order of $G$ that is not contained in $\Pi$. 

We choose $s$ be a \textit{primitive prime divisor}  of $q-1$, see \cite[Section~2.4]{LPS90}. This means that $s$ divides $q-1$ but not $p^{b}-1$ for any $b \leq e-1$. In particular, upon choosing a larger $q$ if needed, we may assume that $s$ does not belong to $\Pi$. (In case the ground field over which $\Gamma$ is defined has characteristic~$0$, we could have simply taken $s = p$ for a suitable chosen ideal $I$, since in that case $\Gamma$ admits congruence quotients over fields of arbitrarily large characteristic.) 

Let now $g \in G$ be an element of order $s$. Since $G$ is quasi-simple, some conjugate of $g$, say $h$, does not commute with $g$. 
Let $\gamma_1 \in \Gamma_1$ be such that $\varphi(\gamma_1) = g$, and $\gamma_2 \in \Gamma_2$ be such that $\varphi(\gamma_2) = h$. The hypotheses of Theorem~\ref{thm:anti-torus} are satisfied, thereby ensuring that $T_1 \times T_2$ contains an anti-torus. 
\end{proof}
	
Let us now illustrate Theorem~\ref{thm:anti-torus} on  the BMW group of degree $(4, 4)$ introduced and studied by Janzen--Wise~\cite{JW}. That groups   admits  the following presentation\footnote{We have used a different labelling of the edges of the defining square complex than in~\cite{JW}, in accordance with the choice made in \cite{Cap_survey}. The labelling from~\cite{JW} is obtained by replacing $(a, b, x, y)$ by $(b, a, x^{-1}, y)$.}:
$$
\Gamma_{\mathrm{JW}}=  \langle a, b, x, y \mid 
 axay, ax^{-1} b y^{-1}, 
 ay^{-1} b^{-1}x^{-1}, bxb^{-1}y^{-1}. \rangle 
$$
 Using Theorem~\ref{thm:anti-torus}, we recover the following alternative proof of the main result of~\cite{JW}. 

\begin{prop}\label{prop:JW1}
The elements $ab$ and $xy$ of $\Gamma_{\mathrm{JW}}$ span an anti-torus. 
\end{prop}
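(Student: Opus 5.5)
The plan is to apply Theorem~\ref{thm:anti-torus} with a suitable finite quotient of $\Gamma_{\mathrm{JW}}$. Let $(x_1,x_2)$ be the base vertex of $T_1\times T_2$. The generators $a,b$ label edges of one tree factor and $x,y$ label edges of the other. So $\langle a,b\rangle$ fixes the coordinate of the base vertex in one tree, and $\langle x,y\rangle$ fixes the coordinate in the other tree. Hence $ab$ lies in one vertex stabilizer $\Gamma_{x_i}$ and $xy$ lies in the other $\Gamma_{x_{3-i}}$, and they play the roles of $\gamma_i$ and $\gamma_{3-i}$ respectively. The conclusion of Theorem~\ref{thm:anti-torus} then gives an anti-torus spanned by powers of $ab$ and $xy$. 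Such a flat is also spanned by $ab$ and $xy$ themselves: powers have the same axes, and any flat containing these axes is their convex hull.

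First I will bound the relevant primes. Both trees are $4$-regular, so each local action is a subgroup of $\Sym(4)$ and its order divides $24$. I could compute the local actions exactly from the link of the single vertex of the presentation complex, but that is unnecessary: it suffices to find $\varphi$ with $\varphi(ab)$ and $\varphi(xy)$ of order prime to $6$. The same bound holds at every vertex, since the action is vertex-transitive.

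The main step is to find a homomorphism $\varphi\colon\Gamma_{\mathrm{JW}}\to G$ to a small finite group with three properties: $\varphi(ab)$ and $\varphi(xy)$ have order $5$ or $7$, and they do not commute. My first candidates are $G=\Alt(5)$, where elements of order $5$ generate non-commuting Sylow subgroups, and $G=\mathrm{PSL}_2(\FF_7)$. I would choose $\varphi(a),\varphi(b),\varphi(x),\varphi(y)$ and check the four relators $axay$, $ax^{-1}by^{-1}$, $ay^{-1}b^{-1}x^{-1}$, $bxb^{-1}y^{-1}$. A useful reduction is that the relators let one eliminate generators. From $axay=1$ we get $y=(axa)^{-1}$, and from $bxb^{-1}=y$ the image is pinned down by $(a,b,x)$. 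This leaves a search over triples subject to two remaining relations, which is a finite check by hand or computer in a group of order $60$ or $168$. If these fail, I would try $\mathrm{PSL}_2(\FF_q)$ for other small $q$, or affine groups $\FF_q\rtimes\FF_q^\times$.

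The hard part is exhibiting an explicit $\varphi$ that satisfies the relations and the coprimality condition simultaneously, since most assignments will break some relator. Once such a $\varphi$ is written down, checking (i) and (ii) of Theorem~\ref{thm:anti-torus} is a direct computation: four relators, two element orders, and one commutator. The theorem then yields the anti-torus, together with irreducibility of $\Gamma_{\mathrm{JW}}$.
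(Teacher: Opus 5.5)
\textbf{The gap: the quotient is never produced.} Your reduction to Theorem~\ref{thm:anti-torus} is the paper's, and your bookkeeping is right: both trees are $4$-regular, so it suffices to find $\varphi$ with $\varphi(ab)$, $\varphi(xy)$ non-commuting of order prime to $6$. But that quotient is the entire content of the proof, and you do not produce it. ``Choose images, check the relators, and if that fails try other groups'' is a search strategy with no guarantee of success, so as written nothing has been proved. The paper supplies the missing ingredient: four explicit permutations of $\{1,\dots,12\}$ that satisfy the four relators and generate a copy of $\mathrm{PGL}_2(11)$, under which $ab$ and $xy$ map to non-commuting elements of order $5$. Note that $\varphi(a)$ is a $10$-cycle, so the image is not contained in $\mathrm{PSL}_2(11)$.

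\textbf{Your candidate groups cannot work.} The places you propose to search would all come up empty.
\begin{itemize}
\item \emph{Affine groups.} $\Gamma_{\mathrm{JW}}^{\mathrm{ab}}\cong\ZZ/6\times\ZZ/2$ has exponent $6$. So in any quotient $H$, an element of order prime to $6$ lies in $[H,H]$, and two non-commuting such elements force $H$ to be non-metabelian. This excludes every affine group $\FF_q\rtimes\FF_q^\times$: there such elements are translations, which commute.
\item \emph{$\Alt(5)$ and $\mathrm{PSL}_2(\FF_7)$.} From $axay$ and $ay^{-1}b^{-1}x^{-1}$ you get $y=a^{-1}x^{-1}a^{-1}$ and $b=x^{-1}a^2xa$, after which $ax^{-1}by^{-1}$ becomes $(xa^2)^3=x^3$. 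If $\varphi(a^2)=1$, then $\varphi(b)=\varphi(a)$ and $\varphi(ab)=1$. Otherwise, in these two groups, $(xa^2)^3=x^3$ together with $bxb^{-1}=y$ forces $\varphi(x)$ to have order $3$. Then $\langle\varphi(x),\varphi(a^2)\rangle$ is a quotient of $\langle u,v\mid u^3,v^k,(uv)^3\rangle$ with $k$ the order of $\varphi(a^2)$. A short check shows one of two things happens. Either the whole image of $\varphi$ lies in a copy of $\Alt(4)$, $\Sym(4)$ or the Frobenius group of order $21$, where any two elements of order prime to $6$ commute. Or $(\varphi(x),\varphi(a^2))$ is, up to automorphism, the unique generating pair of type $(3,5,3)$, resp.\ $(3,7,3)$, and that pair violates $bxb^{-1}=y$.
\end{itemize}

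\textbf{A minor point on the axes.} In $T_1\times T_2$ a power of $xy$ can have more translation axes than $xy$ itself, because its fixed set in the other tree can grow. So ``powers have the same axes'' is not the right justification for passing from powers to the elements themselves. What works is this: the flat produced is $\ell_1\times\ell_2$, and $xy$ (resp.\ $ab$) fixes the base point, which lies on the axis of $ab$ (resp.\ $xy$).
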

\begin{proof}	
One checks that the assignments
\begin{align*}
a & \mapsto (1, 5, 12, 2, 10, 6, 8, 3, 7, 4)\\
b & \mapsto (1, 5, 3, 11, 12, 8, 6, 10, 7, 9, 2, 4)\\
x& \mapsto (1, 8, 11)(2, 7, 5)(3, 12, 10)(4, 6, 9)\\
y & \mapsto (1, 9, 10)(2, 8, 7)(3, 4, 12)(5, 11, 6)
\end{align*}
extend to a group homomorphism  $\varphi \colon \Gamma_{\mathrm{JW}} \to \Sym(12)$ (whose image is isomorphic to $\mathrm{PGL}_2(11)$). The permutations  $\varphi(ab) = (1, 3, 9, 2, 7)(4, 5, 8, 11, 12)$ and $\varphi(xy)=(1, 7, 11, 9, 12)(4, 5, 8, 6, 10)$ are both of order~$5$, and they do not commute. On the other hand, since both trees $T_1$ and $T_2$ have valency~$4$, the orders of the local actions both divide $24$.  Hence Theorem~\ref{thm:anti-torus} ensures that translation axes of $ab$ and $xy$ span an anti-torus. 
\end{proof}

We next consider the group 
$$
\Gamma_{\mathrm{JW2}}=  \langle a, b, x, y \mid 
a^2 x^2, a b a y, x y^2 b, x b^2y \rangle 
$$
introduced in \cite[Section~2]{JW}\footnote{The group $\Gamma_{\mathrm{JW2}}$ is defined in \cite{JW} as the fundamental group of a square complex appearing in Figure~6 of \cite{JW}. That figure contains a misprint: the orientation of the arrow appearing at the bottom of the second square should be reversed.}. As the authors point out, the group $\Gamma_{\mathrm{JW2}}$ has a faithful geometric action on a product $T_1\times T_2$ of two tetravalent tree; that action is sharply transitive on the vertices, but it swaps the two factors. Thus $\Gamma_{\mathrm{JW2}}$ has a subgroup of index~$2$ that embeds as a cocompact lattice in $\Aut(T_1)\times \Aut(T_2)$. The following result answers a question left open at the end of Section~2 in \cite{JW}. 

\begin{prop}\label{prop:JW2}
The elements   $bx$ and $ya$ in $\Gamma_{\mathrm{JW2}}$  span an anti-torus.  
\end{prop}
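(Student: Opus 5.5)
The approach mirrors the proof of Proposition~\ref{prop:JW1}: reduce to Theorem~\ref{thm:anti-torus} by finding a finite quotient of $\Gamma_{\mathrm{JW2}}$ in which the images of $bx$ and $ya$ have coprime order to the local actions and do not commute. Theorem~\ref{thm:anti-torus} is stated for lattices in $\Aut(T_1)\times\Aut(T_2)$, so I will not apply it to $\Gamma_{\mathrm{JW2}}$ itself. Instead I pass to the index-two subgroup $\Gamma' \leq \Gamma_{\mathrm{JW2}}$ preserving the two factors.

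The first step is to identify $\Gamma'$ and check that $bx, ya \in \Gamma'$. The generators $a,b,x,y$ correspond to the edges of the one-vertex square complex. Since the action swaps the factors, each generator swaps the two tree directions, so $\Gamma'$ consists of the words of even length. In particular $bx$ and $ya$ lie in $\Gamma'$. Next, I read off from the square complex which of $bx$, $ya$ fixes a vertex of $T_1$ and which fixes a vertex of $T_2$. A product of two generators moves one step in each factor and back, so its action splits as an elliptic part on one tree and a translation-type part on the other. I would check from the link structure, i.e.\ the four squares, that $bx$ is elliptic on one factor and $ya$ on the other, fixing the base vertex $(x_1,x_2)$ in the appropriate coordinate. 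This is the analogue of $\gamma_i \in \Gamma_{x_i}$. If the naive choice is off, I would replace the elements by conjugates or squares; this does not affect the conclusion, since an anti-torus spanned by powers of conjugates is a translate of one spanned by powers of $bx$ and $ya$.

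Both trees are tetravalent, so the local actions of $\Gamma'$ have order dividing $24$. It therefore suffices to find a homomorphism $\varphi\colon \Gamma_{\mathrm{JW2}} \to \Sym(N)$ such that $\varphi(bx)$ and $\varphi(ya)$ have order coprime to $6$ (say order $5$, $7$ or $11$) and do not commute. Restricting $\varphi$ to $\Gamma'$ then gives conditions (i) and (ii) of Theorem~\ref{thm:anti-torus}.

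The main obstacle is finding such a $\varphi$. I would search computationally for low-index subgroups of $\Gamma_{\mathrm{JW2}}$, or for quotients onto small groups such as $\mathrm{PSL}_2(q)$, $A_n$ or $\mathrm{PGL}_2(11)$. The search consists of assigning permutations to $a,b,x,y$ satisfying the four relators $a^2x^2$, $abay$, $xy^2b$, $xb^2y$ and testing the orders and the commutator of $\varphi(bx)$ and $\varphi(ya)$. Once found, the proof is a direct verification as in Proposition~\ref{prop:JW1}: check the relators, the orders, and non-commutation. Theorem~\ref{thm:anti-torus} then yields an anti-torus spanned by powers of $bx$ and $ya$.
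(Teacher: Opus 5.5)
You follow the same strategy as the paper: pass to the index-two subgroup $\Gamma'$, find an explicit finite quotient, and invoke Theorem~\ref{thm:anti-torus}. As it stands, though, the proposal has two genuine gaps.

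The first gap is that the certificate is missing. The whole content of the paper's proof is an explicit homomorphism $\varphi\colon\Gamma_{\mathrm{JW2}}\to\Sym(20)$, with image in $\mathrm{PGL}_2(9)\wr C_2$, whose relators, orders and commutators can be checked by hand. You only describe a search, and nothing guarantees a priori that a quotient with the required properties exists. Your target is also too rigid, because (ii) need only hold for suitable powers. For the paper's $\varphi$, $\varphi(ya)$ is the product of a $10$-cycle and an $8$-cycle, of order $40$. The displayed $\varphi(ya)^2$ still contains two $4$-cycles, so it has order $20$, not $5$. It is $\varphi((ya)^8)=(1,5,7,20,8)(6,9,18,19,10)$ that has order $5$, and it does not commute with $\varphi(bx)$.

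The second gap is more serious: your heuristic for the hypothesis $\gamma_i\in\Gamma_{x_i}$ is wrong, and the fallback does not rescue it. Reading off the sixteen corners of the four relators, the link of the vertex is $K_{4,4}$ with parts $\{a,b,x,y\}$ and $\{a^{-1},b^{-1},x^{-1},y^{-1}\}$. Hence every generator swaps the factors. A word $st$ with $s,t$ positive moves the base vertex one step in \emph{each} tree, and only words like $st^{-1}$ or $s^{-1}t$ stay in one tree.

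For $ya$, squaring helps. The relator $abay$ gives $ya=a^{-1}b^{-1}$, so $(ya)^2=yb^{-1}$ is a non-backtracking path of two edges in one tree. Hence $ya$ inverts an edge of the other tree and $(ya)^2$ fixes a vertex of it, which is why the paper works with powers of $ya$.

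For $bx$, nothing helps. The relator $xy^2b$ gives $bx=y^{-2}$, and $xb^2y$ gives $(bx)^2=by^{-1}b^{-1}x$. The latter is a non-backtracking path of two edges in one tree followed by two in the other. So in each tree $bx$ displaces the base vertex by $1$ and $(bx)^2$ displaces it by $2$. This means $bx$ is hyperbolic of translation length $1$ on both $T_1$ and $T_2$, as expected since $y$ swaps the factors and so $y^2$ acts on them by conjugate automorphisms. No power or conjugate of $bx$ fixes a vertex of either tree, so Theorem~\ref{thm:anti-torus} cannot be applied with $\gamma_1$ a power of $bx$, whatever quotient you find. Moreover, conjugating $bx$ and $ya$ by different elements would not give a translate of a flat spanned by $bx$ and $ya$ anyway.

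The printed proof also takes this hypothesis for granted, so it is not a formality to be checked at the end. To use Theorem~\ref{thm:anti-torus} you need one element elliptic on each factor, and the resulting anti-torus is spanned by those elements, not by $bx$.
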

\begin{proof}
Since both trees $T_1$ and $T_2$ have valency~$4$, the orders of the local actions both divide $24$. We let   $\Gamma$ be the index~$2$ subgroup of $\Gamma_{\mathrm{JW2}}$ that preserves both tree factors. Thus $\Gamma$  embeds as a cocompact lattice in $\Aut(T_1)\times \Aut(T_2)$. Moreover $bx$ and $ya$ both belong to $\Gamma$. One checks that the assignments
\begin{align*}
a & \mapsto (1, 2, 5, 11, 8, 3, 6, 13, 7, 4)(9, 17, 19, 15, 18, 16, 10, 14, 20, 12)\\
b & \mapsto (1, 3, 7, 15, 19, 11, 18, 14, 6, 2)(4, 9, 13, 5, 12, 8, 16, 20, 17, 10)\\
x & \mapsto (1, 14, 7, 16, 6, 15, 8, 17, 5, 12)(2, 10, 4, 18, 13, 19, 3, 9, 11, 20)\\
y & \mapsto (1, 16, 5, 3, 20, 15, 19, 14, 9, 2)(4, 8, 17, 7, 12, 18, 13, 10, 11, 6)
\end{align*}
extend to a group homomorphism  $\varphi \colon \Gamma \to \Sym(20)$ (whose image is an index~$2$ subgroup of $\mathrm{PGL}_2(9) \wr C_2$). We have 
\begin{align*}
\varphi(ya)^2 &= (1, 8, 20, 7, 5)(3, 16, 13, 17)(4, 12, 11, 14)(6, 10, 19, 18, 9)\\
\varphi(bx) &= (1, 9, 19, 20, 5)(2, 14, 15, 3, 16)(4, 11, 13, 12, 17)(6, 10, 18, 7, 8),
\end{align*} 
so that the  images of $bx$ and $(ya)^2$ both have order~$5$, and they do not commute. Thus Theorem~\ref{thm:anti-torus} applies. This ensures that $bx$ and $ya$ span an anti-torus in $\Gamma$, hence also in $\Gamma_{\mathrm{JW2}}$.
\end{proof}
		
\begin{remark}
It turns out that both $\Gamma_{\mathrm{JW}}$ and $\Gamma_{\mathrm{JW2}}$ admit a homomorphic image that is a quaternionic arithmetic lattice, respectively defined over the global fields $\mathbf Q$ and $\FF_3(t)$. This provides an alternative approach to produce finite quotients satisfying the hypotheses of Theorem~\ref{thm:anti-torus} and thus to construct anti-tori in those groups, by proceeding as in the proof of Corollary~\ref{cor:anti-tori:arithmetic}. We omit the details. 
\end{remark}

Finally, we indicate how to deduce Corollary~\ref{cor:first-fractal-example} from Theorem~\ref{thm:anti-torus}. 

\begin{proof}[Proof of Corollary~\ref{cor:first-fractal-example}]
The local actions of $\Gamma_0$ on both trees are $2$-groups (the local actions can be easily identified on the basis of the presentation of a BMW group, see \cite[Section~4.5]{Cap_survey}). One checks that the assignments 
\begin{align*}
a & \mapsto (1, 5, 2, 6, 3, 7)(4, 8)\\
b & \mapsto (1, 6, 2, 5, 3, 8)(4, 7)\\
x& \mapsto (1, 6)(2, 8)(3, 7)(4, 5)\\
y & \mapsto (1, 5)(2, 6)(3, 7)(4, 8)\\
z & \mapsto (1, 7)(2, 8)(3, 5)(4, 6)\\
t & \mapsto (1, 7)(2, 5)(3, 6)(4, 8)
\end{align*}
extend to a group homomorphism  $\varphi \colon \Gamma_0 \to \Sym(8)$ (whose image is isomorphic to  $\Alt(4) \wr C_2$). Clearly, $\varphi(a^2)= (1, 2, 3)(5,  6,7)$ and $\varphi(xy) = (1, 2, 4)(5, 8, 6)$ have order~$3$. Moreover they do not commute. Hence $a$ and $xy$ span an anti-torus by Theorem~\ref{thm:anti-torus}. 
\end{proof}

\section{Mealy automata and BMW groups}\label{sec:bireversible}

A \textbf{Mealy automaton} is a tuple $\mathcal B = (Q, L, \lambda, \rho)$ consisting of finite sets $Q$ and $L$ respectively called the \textbf{set of states} and the \textbf{alphabet}, and maps $\lambda  \colon Q \times L \to L$ and $\rho \colon Q \times L \to Q$ respectively called the \textbf{output map} and the \textbf{transition map}. The automaton $\mathcal B$ is called \textbf{invertible} if  for all $q \in Q$, the map  $ L \to L : \ell \mapsto \lambda(q, \ell)$ is a bijection. It is  called \textbf{reversible} if  for all $\ell \in L$ the map $Q \to Q : q \mapsto \rho(q, \ell)$ is a bijection. We say that $\mathcal B$ is   \textbf{bi-reversible} if it is both invertible and reversible, and if moreover the map $Q \times L \to  L\times Q : (q, \ell) \mapsto \big(\lambda(q, \ell), \rho(q, \ell) \big)$ is a bijection. 

To each Mealy automaton $\mathcal B  = (Q, L, \lambda, \rho)$, one associates the \textbf{dual automaton}, defined as  $\mathcal B^* = (L, Q, \rho \circ \tau, \lambda \circ \tau)$, where $\tau \colon L \times Q \to Q \times L$ is the map that swaps the coordinates. If $\mathcal B$ is invertible, one also defines the \textbf{inverse automaton} $\mathcal B^{-1}$ whose set of states is a set $Q^{-1}$ in bijection with $Q$, whose alphabet is $L$, and whose output maps are defined as the inverses of the output maps of $\mathcal B$ (see \cite[Section~1.3.8]{Nekra}). 
The condition that $\mathcal B$ be bi-reversible amounts to requiring that the automata $\mathcal B$, $\mathcal B^*$ and $(\mathcal B^{-1})^*
$ are all invertible (see \cite[Section~1.10]{Nekra}). 

If the Mealy automaton $\mathcal B$ is invertible, it defines a  group denoted by $G_{\mathcal B}$ that acts on the regular rooted tree, whose vertices are naturally in bijection with the free monoid generated by the alphabet $L$, see \cite[Def.~1.5.7]{Nekra}.

Glasner--Mozes \cite{GM} have shown that every bi-reversible automaton $\mathcal B$ gives rise, in a canonical way, to a BMW group, namely the group $\Gamma_{\mathcal B}$ considered in Corollary~\ref{cor:birev-non-torsion}. This allows us to deduce Corollary~\ref{cor:birev-non-torsion} from Theorem~\ref{thm:anti-torus}. 

\begin{proof}[Proof of Corollary~\ref{cor:birev-non-torsion}]
By \cite[Proposition~2.6]{GM}, the group $\Gamma_{\mathcal B}$ is a BMW group, acting regularly on the vertices of a product $T_Q \times T_L$ of two trees. The tree $T_Q$ (resp. $T_L$) is the Cayley tree of the subgroup of  $\Gamma_{\mathcal B}$ generated by $Q$ (resp. $L$). The local action of $\Gamma_{\mathcal B}$  on $T_Q$ is a subgroup of the direct product $\Sigma_{\mathcal B^*} \times \Sigma_{(\mathcal B^{-1})^*}$, where $\Sigma_{\mathcal B^*} \leq \Sym(Q)$ is generated by $\{ \rho(\cdot , \ell) \mid \ell \in L\}$, and $ \Sigma_{(\mathcal B^{-1})^*}$ is the corresponding permutation group if $\mathcal B$ is replaced by $\mathcal B^{-1}$. In particular, the order of the local action of $\Gamma_{\mathcal B}$  on $T_Q$ is a divisor of $n_{\mathcal B^*}  n_{(\mathcal B^{-1})^*}$. Similarly, the order of the local action of $\Gamma_{\mathcal B}$  on $T_L$ divides $n_{\mathcal B} n_{((\mathcal B^*)^{-1})^*}$. Hence, under the assumptions of Corollary~\ref{cor:birev-non-torsion}, we see that the hypotheses of Theorem~\ref{thm:anti-torus} are fulfilled. It follows that $\gamma_Q$ (resp. $\gamma_L$) acts as an element of infinite order on $T_L$ (resp. $T_Q$). In view of \cite[Proposition~2.12]{GM}, this implies that the automaton groups $G_{\mathcal B}$ and $G_{\mathcal B^*}$ both contain elements of infinite order. Now, the required conclusion follows from \cite[Corollary~1.2]{FM}. 
\end{proof}

In order to illustrate Corollary~\ref{cor:birev-non-torsion}, we consider two specific bi-reversible automata, denoted $\mathcal B$ and $\mathcal C$, studied in \cite[\S 3]{KPS}.

\begin{example}\label{ex:KPS1}
The automaton $\mathcal B$ with set of states $Q = \{q, w, e, r\}$ and alphabet $L = \{0, 1\}$  is defined on  Figure~5 of \cite{KPS}. We do not reproduce that figure, but it can be reconstructed from the presentation of the associated BMW group $\Gamma_{\mathcal B}$, which is the following:
\begin{align*}
\Gamma_{\mathcal B} = \langle q, w, e, r, 0, 1| & 
q0=1r, \  q1=0w, \ w0=0q, \ w1=1q, \\ 
&e0=1w, \ e1=0r, \ r0=1e, \ r1=0e\rangle.
\end{align*}
One checks that $n_{\mathcal B*} = n_{(\mathcal B^{-1})^*} = 8$, and $n_{\mathcal B} = n_{((\mathcal B^*)^{-1})^*} = 2$. Moreover, the assignments
\begin{align*}
q & \mapsto (3, 8)(4, 12, 11, 5)(9, 10)\\
w & \mapsto (2, 8)(6, 10, 9, 7)(11, 12)\\
e & \mapsto (1, 2)(4, 5, 11, 12)(9, 10)\\
r & \mapsto (1, 3)(6, 7, 9, 10)(11, 12) \\
0 & \mapsto (1, 4, 6)(2, 5, 7, 3, 11, 9)(8, 12, 10)\\
1 & \mapsto (1, 5, 7)(2, 4, 6, 3, 12, 10)(8, 11, 9)
\end{align*}
extend to a homomorphism $\varphi \colon \Gamma_{\mathcal B} \to \Sym(12)$. Clearly the permutations $\varphi(qr) = (1, 3, 8)(4, 11, 5)(6, 7, 9)$ and $\varphi(0)^2 = (1, 6, 4)(2, 7, 11)(3, 9, 5)(8, 10, 12)$ have order~$3$. Moreover they do not commute. By Theorem~\ref{thm:anti-torus} the elements $0$ and $qr$ span an anti-torus in $\Gamma_{\mathcal B}$, whereas Corollary~\ref{cor:birev-non-torsion} ensures that the automaton groups $G_{\mathcal B}$ and $G_{\mathcal B^*}$ both contain non-abelian free sub-semigroups (compare \cite[Proposition~3.1]{KPS}).
\end{example}
	
\begin{example}\label{ex:KPS2}
The automaton $\mathcal C$ with set of states $Q = \{a, b, c, d\}$ and alphabet $L = \{0, 1\}$  is defined on Figure~7 of \cite{KPS}. We do not reproduce that figure, but it can be reconstructed from the presentation of the associated BMW group $\Gamma_{\mathcal C}$, which is the following:
\begin{align*}
\Gamma_{\mathcal C} = \langle a, b, c, d, 0, 1| & 
%a0=1d, \ a1=0d, \ b0=0c, \ b1=1c, \\
%&  c0=0a, \ c1=1b,\  d0=0b, \ d1=1a\rangle.
a0 = 0c, \ a1=1d, \ b0 = 0d, \ b1=1c, \\
& c0=0b, \ c1 = 1b, \ d0=1a, \ d1=0a \rangle. 
\end{align*}
We claim that $\Gamma_{\mathcal C}$ admits the group $\Gamma_0$ from Corollary~\ref{cor:first-fractal-example} as a quotient. Indeed, renaming the generators of $\Gamma_0$ by $a_{0}, b_{0}, x, y, z, t$ to avoid confusion with the elements of $Q$, one checks that the assignments 
\begin{align*}
(a, b, c, d) & \mapsto (z, y, t, x)\\
(0, 1)& \mapsto (b_{ 0}, a_{0})
\end{align*}
extend to a surjective homomorphism. In fact this homomorphism factors through 	$\Gamma_{\mathcal C} /\langle \! \langle a^2, b^2, c^2, d^2\rangle \!\rangle $, and yields an isomorphism $\Gamma_{\mathcal C} /\langle \! \langle a^2, b^2, c^2, d^2\rangle \!\rangle  \simeq \Gamma_0$. The proof of Corollary~\ref{cor:first-fractal-example} shows that the hypotheses of Corollary~\ref{cor:birev-non-torsion} are fulfilled. This shows that the automaton groups $G_{\mathcal C}$ and $G_{\mathcal C^*}$ both contain non-abelian free sub-semigroups (compare \cite[Proposition~3.6]{KPS}).
\end{example}

\begin{remark}
We have mentioned in the introduction the open question from \cite[\S4.4]{Cap_survey} asking whether every irreducible lattice in a product of two trees contains an anti-torus. We have also mentioned the conjecture from  \cite{GodKli} that a group defined by a bi-reversible automaton contains an element of infinite order if and only if it is infinite. We note that a positive answer to the former question implies that the conjecture holds: indeed, in view of the work of Glasner--Mozes \cite[Proposition~2.12]{GM}, this follows from Proposition~\ref{prop:anti-torus}. 
\end{remark}

Let us now consider the tuple $\mathcal A = (V_1, V_2, a, b, c, d)$ defined in the introduction. We may naturally view it as a Mealy automaton, with $V_1$ as set of states and $V_2$ as alphabet. The transition  map is defined as $\lambda  \colon (v_1, v_2) \mapsto av_1 + b v_2$ and the output map  is $\rho  \colon (v_1, v_2) \mapsto cv_1 + d v_2$. We shall say that $\mathcal A $ is a \textbf{Mealy automaton of matrix type}. The following observation follows directly from the definitions. 

\begin{lem}\label{lem:birev}
The automaton $\mathcal A = (V_1, V_2, a, b, c, d)$ has the following properties. 
\begin{enumerate}[label=(\roman*)]
\item $\mathcal A$ is invertible if and only if the linear operator $d$ is invertible. 

\item  $\mathcal A$ is reversible if and only if the linear operator $a$ is invertible. 

\item $\mathcal A$ is bi-reversible if and only if  $a$, $d$ and $\left(\begin{matrix} a & b \\ c & d \end{matrix}\right)$ are invertible. 
\end{enumerate}
\end{lem}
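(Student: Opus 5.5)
The plan is to unwind the definitions from the beginning of Section~\ref{sec:bireversible}, using the identification of $\mathcal A$ with a Mealy automaton whose set of states is $V_1$ and whose alphabet is $V_2$. Since the output map must take values in the alphabet $V_2$ and the transition map in the state set $V_1$, I will work with
$$(v_1, v_2) \mapsto cv_1 + dv_2 \in V_2 \qquad\text{and}\qquad (v_1, v_2) \mapsto av_1 + bv_2 \in V_1,$$
regardless of which of the two is called $\lambda$ and which $\rho$. The only tool needed is elementary: over a finite field, a map between finite-dimensional spaces of the form $u \mapsto Mu + w_0$ (affine, with linear part $M$) is a bijection if and only if $M$ is invertible. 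Indeed, translation by $w_0$ is a bijection, so bijectivity is equivalent to that of $M$.

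For (i), invertibility requires that for each fixed state $v_1$ the map $V_2 \to V_2 : v_2 \mapsto dv_2 + cv_1$ be a bijection. This map is affine with linear part $d$, so it is bijective for one (equivalently, every) $v_1$ exactly when $d$ is invertible. For (ii), reversibility requires that for each fixed letter $v_2$ the map $V_1 \to V_1 : v_1 \mapsto av_1 + bv_2$ be a bijection. This is affine with linear part $a$, so the condition is exactly that $a$ is invertible.

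For (iii), bi-reversibility means (i) and (ii) together with bijectivity of the combined map $V_1 \times V_2 \to V_2 \times V_1 : (v_1, v_2) \mapsto (cv_1 + dv_2,\ av_1 + bv_2)$. This map is the composition of the linear operator $\left(\begin{matrix} a & b \\ c & d \end{matrix}\right)$ on $V_1 \oplus V_2$ with the coordinate swap $V_1 \oplus V_2 \to V_2 \oplus V_1$. The swap is a bijection, so the combined map is bijective precisely when the block operator is invertible. Combining this with (i) and (ii) gives (iii).

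There is no genuine obstacle. The only point needing care is bookkeeping: one must make sure the output map is the one landing in the alphabet $V_2$, so that invertibility corresponds to $d$ and reversibility to $a$, and not the other way around.
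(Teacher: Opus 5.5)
Your proposal is correct and is the same definition-unwinding the paper intends; the paper gives no separate proof and only says that the lemma ``follows directly from the definitions.'' You were also right to identify the maps by their codomains. The sentence introducing $\mathcal A$ swaps the symbols $\lambda$ and $\rho$ relative to the conventions of Section~\ref{sec:bireversible}, but the presentation of $\Gamma_{\mathcal A}$ confirms that the output map is $(v_1,v_2)\mapsto cv_1+dv_2$, so invertibility corresponds to $d$ and reversibility to $a$, as you have it.
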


\begin{remark}\label{rem:bi-reversible}
Suppose that $\mathcal A = (V_1, V_2, a, b, c, d)$ is Mealy automaton of matrix type. It is easy to check that if $\mathcal A$ is invertible, then the inverse automaton $\mathcal A^{-1}$ is also of matrix type: in the inverse automaton, the tuple $(a, b,  c , d)$  is replaced by $( a- bd^{-1} c, b d^{-1},  -d^{-1}c,  d^{-1} )$. Similarly, if  $\mathcal A$ is reversible (resp. bi-reversible), then $(\mathcal A^*)^{-1}$ (resp. $((\mathcal A^{-1})^*)^{-1}$) is of matrix type. 
\end{remark}

 In view of \cite{GM}, we infer from  Lemma~\ref{lem:birev} that   $\Gamma_{\mathcal A}$ is a BMW group as soon as  $a$, $d$ and $\left(\begin{matrix} a & b \\ c & d \end{matrix}\right)$ are invertible. We shall assume henceforth that this condition is satisfied.

\section{Flats spanned by periodic lines}

%We assume henceforth that  $a$, $d$ and $\left(\begin{matrix} a & b \\ c & d \end{matrix}\right)$ are invertible. 
The product of trees $T_1 \times T_2$ is a square complex than can be identified with the presentation complex associated with the presentation of $\Gamma_{\mathcal A}$ defined above. The $1$-skeleton of that complex is the Cayley graph of $\Gamma_{\mathcal A}$  with respect to the generating set $V_1 \sqcup V_2$. The horizontal (resp. vertical) edges are labeled by the elements of $V_1$ (resp. $V_2$), while the squares are labeled by the relators of $\Gamma_{\mathcal A}$, that are in one-to-one correspondence with the direct sum $V_1 \oplus V_2$. 

Fix $v_1 \in V_1$ and $v_2 \in V_2$. Let $\ell_1$ be the horizontal geodesic line in $T_1$ containing the base vertex $e$, that is a translation axis for the generator $v_1$  of  $\Gamma_{\mathcal A}$. Define similarly the   vertical line $\ell_2$ as a translation axis for $v_2$ passing through the base vertex $e$. If one identifies the vertex set of $T_1 \times T_2$ with  $\Gamma_{\mathcal A}$, the vertex set of $\ell_i$ is the cyclic subgroup of $\Gamma_{\mathcal A}$ generated by $v_i$. 

Let $P_{v_1, v_2} \subset T_1 \times T_2$ be the flat plane spanned in $\ell_1 \cup \ell_2$. Hence the vertex set of $P_{v_1, v_2}$ may be identified with the product $\langle v_1 \rangle \times \langle v_2 \rangle$. The square complex structure of $T_1 \times T_2$ induces a square tiling on  $P_{v_1, v_2}$. We let 
$$w \colon \mathbf Z^2 \to V_1 \oplus V_2$$ 
be the map that associates with $(m, n)$ the relator in $V_1 \oplus V_2$ that labels the square in $P_{v_1, v_2}$ spanned by the vertices with coordinates $(v_1^m, v_2^n) $, $(v_1^{m+1}, v_2^n) $, $(v_1^m, v_2^{n+1}) $ and $(v_1^{m+1}, v_2^{n+1}) $ in $\langle v_1 \rangle \times \langle v_2 \rangle$, see Figure~\ref{fig:anti-torus}. We say that $w$ is the \textbf{canonical tiling} associated with the flat $P_{v_1, v_2}$. The goal of this section is to show that $w$ is always generated by a substitution of length $p = \mathrm{char}(F)$. This will be achieved in Proposition~\ref{prop:anti-tori:substitution} below. A first step is afforded by the following. 

\begin{figure}[h]
\includegraphics[height=7cm]{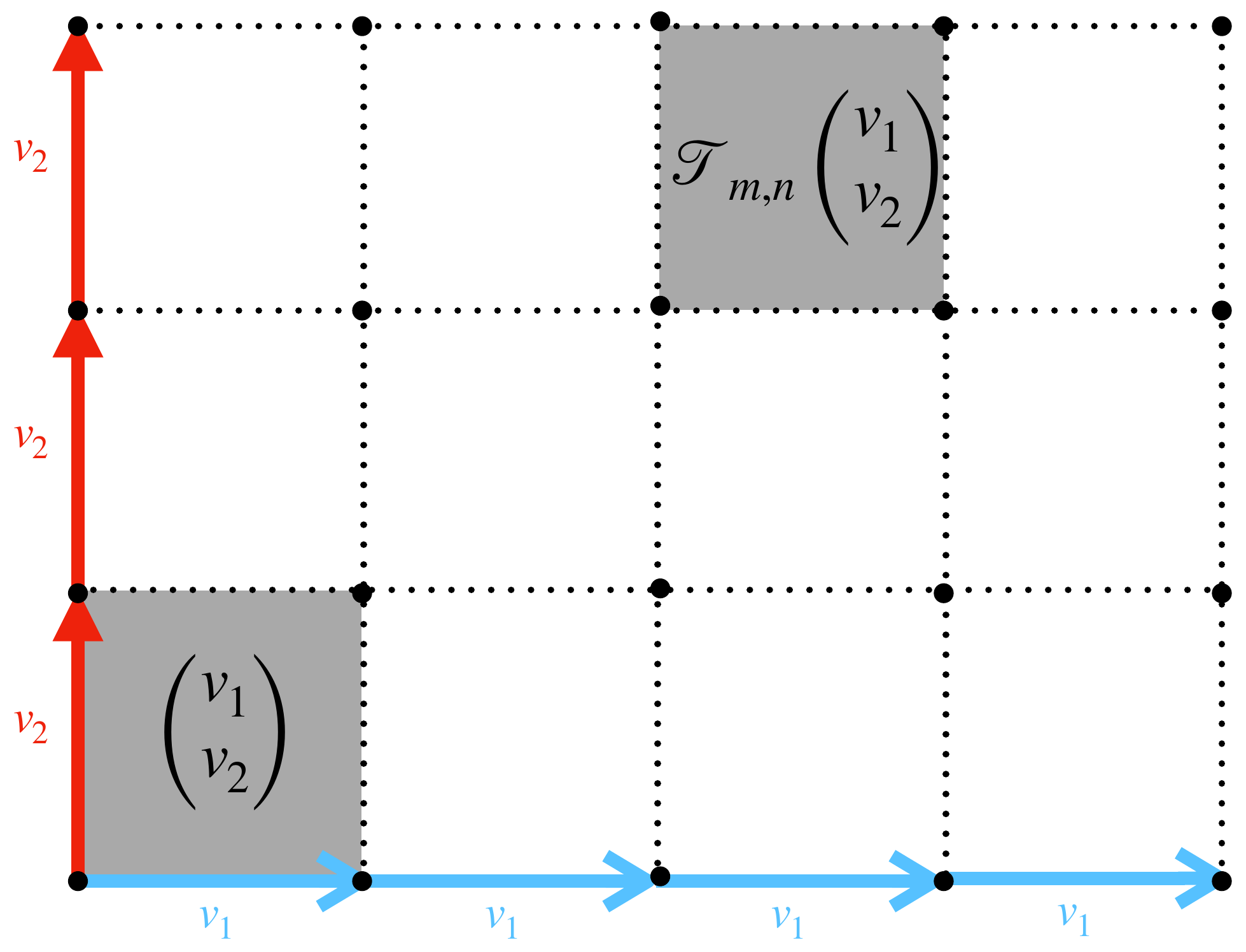}

\caption{The flat plane $P_{v_1, v_2}$, where $w(m, n) = \mathcal T_{m, n}\left(\begin{matrix} v_1 \\ v_2 \end{matrix} \right)$} \label{fig:anti-torus}
\end{figure}

\begin{prop}\label{prop:recursive}
Let $\mathcal T \colon \NN^2 \to \mathrm{End}(V_1 \oplus V_2)$ be the map recursively defined by setting $\mathcal T_{0, 0} = \mathrm{Id}$, $\mathcal T_{ 0, n} = \left(\begin{matrix} a & b \\ 0 & 1 \end{matrix}\right)^n$ for $n>0$, $\mathcal T_{m, 0} = \left(\begin{matrix} 1 & 0 \\ c & d \end{matrix}\right)^m$   for $m>0$, and 
$$
\mathcal T_{m+1, n+1} =  \left(\begin{matrix}  a & b \\ 0 & 1\end{matrix}\right)  \mathcal T_{m+1, n} 
+ \left(\begin{matrix}1 & 0 \\ c & d  \end{matrix}\right)  \mathcal T_{m, n+1} 
-  \left(\begin{matrix} a & b \\ c & d \end{matrix}\right)  \mathcal T_{m, n} 
$$ 
for all $m, n \in \mathbf N$. 

Then we have $w(m, n) =  \mathcal T_{m, n}  \left(\begin{matrix} v_1 \\ v_2\end{matrix}\right)$ for all $m, n \in \mathbf N$. 
\end{prop}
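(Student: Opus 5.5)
The plan is to introduce edge labels on the flat $P_{v_1, v_2}$, derive from the defining relators a local rule expressing the labels of a square in terms of those of its neighbours, and then check that $\mathcal T_{m,n}\left(\begin{smallmatrix} v_1 \\ v_2\end{smallmatrix}\right)$ obeys the same recursion with the same initial values. For $m, n \in \NN$ I would write $h(m,n) \in V_1$ for the label of the horizontal edge joining the vertices with coordinates $(v_1^m, v_2^n)$ and $(v_1^{m+1}, v_2^n)$. I would write $u(m,n) \in V_2$ for the label of the vertical edge joining $(v_1^m, v_2^n)$ and $(v_1^m, v_2^{n+1})$.

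\textbf{Step 1: the local rule.} The defining relator $v_1\cdot(cv_1+dv_2) = v_2\cdot(av_1+bv_2)$ shows how the square labelled $(v_1, v_2)$ is glued. Its bottom horizontal edge is labelled $v_1$ and its left vertical edge is labelled $v_2$. Its top horizontal edge is labelled $av_1+bv_2$, and its right vertical edge is labelled $cv_1+dv_2$. So the label of a square is the pair formed by its bottom and left edge labels:
$$w(m,n) = \big(h(m,n), u(m,n)\big).$$
The remaining edges of the square are then given by
$$h(m,n+1) = a\,h(m,n) + b\,u(m,n), \qquad u(m+1,n) = c\,h(m,n) + d\,u(m,n).$$
Here I would briefly check the orientation convention, so that the coordinates $(v_1^m, v_2^n)$ increase in the directions in which the generators are read.

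\textbf{Step 2: boundary values.} The bottom row of $P_{v_1,v_2}$ lies on the axis $\ell_1 \times \{e\}$, whose vertices are the powers of $v_1$, so $h(m,0) = v_1$ for all $m$. Likewise the left column lies on $\{e\}\times\ell_2$, so $u(0,n) = v_2$ for all $n$. The local rule then gives
$$w(0,n+1) = \big(a h(0,n) + b v_2,\ v_2\big) = \left(\begin{smallmatrix} a & b \\ 0 & 1\end{smallmatrix}\right) w(0,n),$$
$$w(m+1,0) = \big(v_1,\ c h(m,0) + d u(m,0)\big) = \left(\begin{smallmatrix} 1 & 0 \\ c & d\end{smallmatrix}\right) w(m,0).$$
Since $w(0,0) = (v_1, v_2)$, this matches $\mathcal T_{0,n}$ and $\mathcal T_{m,0}$ applied to $\left(\begin{smallmatrix} v_1 \\ v_2\end{smallmatrix}\right)$.

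\textbf{Step 3: the interior recursion.} I would verify directly that
$$\left(\begin{smallmatrix} a & b \\ 0 & 1\end{smallmatrix}\right) w(m+1,n) + \left(\begin{smallmatrix} 1 & 0 \\ c & d\end{smallmatrix}\right) w(m,n+1) - \left(\begin{smallmatrix} a & b \\ c & d\end{smallmatrix}\right) w(m,n) = w(m+1,n+1).$$
In the first coordinate the left-hand side is
$$\big(a h(m+1,n) + b u(m+1,n)\big) + h(m,n+1) - \big(a h(m,n) + b u(m,n)\big).$$
By the local rule the first bracket equals $h(m+1,n+1)$ and the last bracket equals $h(m,n+1)$, so this reduces to $h(m+1,n+1)$. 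The second coordinate reduces to $u(m+1,n+1)$ in the same way, using the rule for $u$. An induction on $m+n$, with Step 2 as the base case, then gives $w(m,n) = \mathcal T_{m,n}\left(\begin{smallmatrix} v_1 \\ v_2\end{smallmatrix}\right)$.

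The only real obstacle is Step 1: getting the orientation and labelling conventions of the squares in $T_1\times T_2$ exactly right, so that the top and right edges carry $av_1+bv_2$ and $cv_1+dv_2$ and not some permuted or inverted version. Once that is fixed, the rest is a formal computation.
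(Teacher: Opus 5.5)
Your proposal is correct and follows essentially the same route as the paper. The paper also reads off from the relators that the square labelled $(u_1,u_2)$ has top and right edges $au_1+bu_2$ and $cu_1+du_2$, handles the rows $m=0$ and $n=0$ by induction, and verifies the interior recursion by the same coordinatewise cancellation, inducting on $m+n$; your edge-label functions $h$ and $u$ just make its local notation $u_1,u_2,u'_1,u'_2,w_1,w_2$ systematic.
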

\begin{proof}
We represent the elements of $V_1 \oplus V_2$ as column vectors.% of the form $\left( \begin{matrix} w_1 \\ w_2 \end{matrix}\right)$ with $w_i \in V_i$. 

For $m=0$, it follows from a straightforward induction on $n\geq 0$ that  $w(0, n) =  \left(\begin{matrix} a & b \\ 0 & 1 \end{matrix}\right)^n \left( \begin{matrix} v_1 \\ v_2 \end{matrix}\right)$. Similarly, for $n=0$, an induction on $m$ shows that  $w(m, 0) =  \left(\begin{matrix} 1 & 0 \\ c & d \end{matrix}\right)^m \left( \begin{matrix} v_1 \\ v_2 \end{matrix}\right)$. Hence the required assertion holds for $m=0$ or $n=0$. 

To finish the proof, we shall prove by induction on $m+n$ that $w(m, n) =  \mathcal T_{m, n}  \left(\begin{matrix} v_1 \\ v_2\end{matrix}\right)$.   Fix $m, n \geq 0$. Given the previous paragraph, it suffices to show that $w(m+1, n+1) =  \mathcal T_{m+1, n+1} \left( \begin{matrix} v_1 \\ v_2 \end{matrix}\right)$, provided that the same holds for pairs of indices whose sum is less than   $m+n + 2$. 

\begin{figure}[h]
\includegraphics[height=6cm]{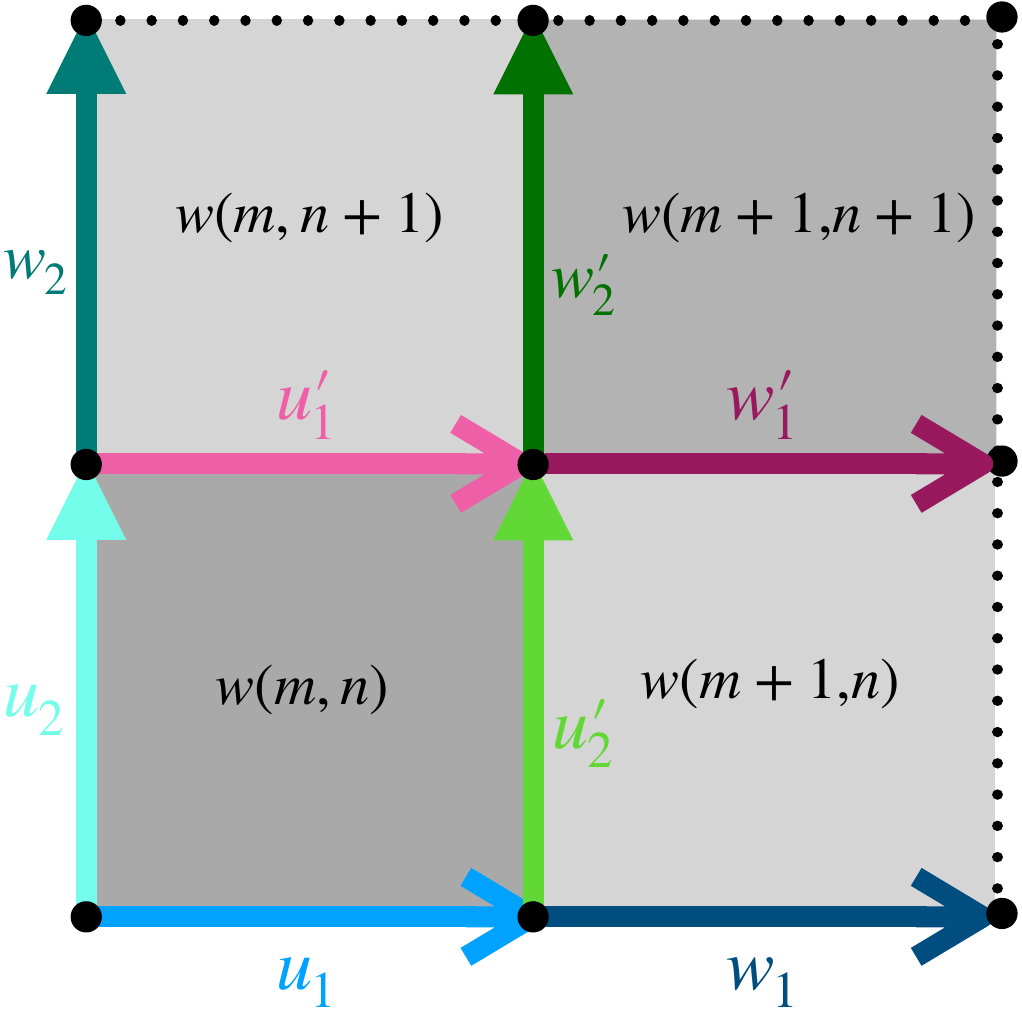}

\caption{Proof of Proposition~\ref{prop:recursive}} \label{fig:rec}
\end{figure}

As displayed on Figure~\ref{fig:rec}, we set 
\begin{align*}
w(m, n)  &= \left( \begin{matrix} u_1 \\ u_2 \end{matrix}\right),  \\
 w(m+1, n) & = \left( \begin{matrix} w_1 \\ u'_2 \end{matrix}\right),\\
 w(m, n+1) & = \left( \begin{matrix} u'_1 \\ w_2 \end{matrix}\right), \\
  w(m+1, n+1) & = \left( \begin{matrix} w'_1 \\ w'_2 \end{matrix}\right).
 \end{align*}
 In view of the induction hypothesis, we have 
  \begin{align*}
   \mathcal T_{m+1, n+1} \left( \begin{matrix} v_1 \\ v_2 \end{matrix}\right) & =
  \left[\left(\begin{matrix}  a & b \\ 0 & 1\end{matrix}\right)  \mathcal T_{m+1, n} 
+ \left(\begin{matrix}1 & 0 \\ c & d  \end{matrix}\right)  \mathcal T_{m, n+1} 
-  \left(\begin{matrix} a & b \\ c & d \end{matrix}\right)  \mathcal T_{m, n} \right]  \left( \begin{matrix} v_1 \\ v_2 \end{matrix}\right)\\
&= \left(\begin{matrix}  a & b \\ 0 & 1\end{matrix}\right) w(m+1, n) 
+ \left(\begin{matrix}1 & 0 \\ c & d  \end{matrix}\right) w(m, n+1)
-  \left(\begin{matrix} a & b \\ c & d \end{matrix}\right)w(m, n)\\
&=  \left(\begin{matrix}  a & b \\ 0 & 1\end{matrix}\right)  \left( \begin{matrix} w_1 \\ u'_2 \end{matrix}\right)
+ \left(\begin{matrix}1 & 0 \\ c & d  \end{matrix}\right)  \left( \begin{matrix} u'_1 \\ w_2 \end{matrix}\right)
-  \left(\begin{matrix} a & b \\ c & d \end{matrix}\right) \left( \begin{matrix} u_1 \\ u_2 \end{matrix}\right)\\
&= \left(\begin{matrix}  aw_1 + bu'_2 + u'_1 -a u_1 -bu_2 \\
u'_2 + cu'_1 + dw_2 -cu_1 - du_2 \end{matrix}\right).
  \end{align*}
The defining relations of $\Gamma_{\mathcal A}$ imply that 
$$u'_1 = au_1 + bu_2, \quad u'_2 =  cu_1 +d u_2, \quad w'_1 = aw_1 + bu'_2 \quad\text{and}\quad w'_2 = cu'_1 + dw_2.$$
Therefore we obtain
\begin{align*}
   \mathcal T_{m+1, n+1} \left( \begin{matrix} v_1 \\ v_2 \end{matrix}\right) 
   & =\left(\begin{matrix}  aw_1 + bu'_2  \\
cu'_1 + dw_2 \end{matrix}\right)\\
&= \left( \begin{matrix} w'_1 \\ w'_2 \end{matrix}\right)\\
&=w(m+1, n+1),
\end{align*}
as required. 
\end{proof}

We now recall a result from \cite{CapVas} showing that tilings by matrices defined via a recurrence relation as in Proposition~\ref{prop:recursive} are always generated by a substitution of length~$p$. 
		
\begin{prop}\label{prop:autom-seq}
Let $F$ be a finite field of characteristic $p$,   $k \geq 1$ be an integer and   $M = \mathrm{Mat}_{k \times k}(F)$ be the corresponding matrix algebra over $F$. Given matrices $A, B, C \in M$, we let $\mathcal T \colon \mathbf Z^2 \to M$ be the tiling recursively defined by the following conditions:
$\mathcal T(0, 0) = 1$,  $ \mathcal T(m, n) = 0$ if  $(m, n) \not \in \mathbf N^2$, and 
 \begin{align}\label{eq:rec:2}
   \mathcal T(m, n) =  
  A \mathcal T(m, n-1) + B\mathcal T(m-1, n) + C \mathcal T(m-1, n-1)
\end{align}
  if  $ (m, n)  \in \mathbf N^2 \setminus \{(0, 0)\}$.
  
Then there is a $2$-dimensional tiling $\overline{\mathcal T}$ with color set $M$, invariant under a linear substitution of length $p$, and a linear map $\tau \colon M \to M$ such that $\mathcal T = \tau \circ \overline{\mathcal T}$.  
\end{prop}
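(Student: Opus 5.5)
We propose to encode $\mathcal T$ by a generating function and exploit the Frobenius identity in characteristic $p$. Set $G(x,y) = \sum_{(m,n)\in\NN^2} \mathcal T(m,n) x^m y^n \in M[[x,y]]$. The recurrence \eqref{eq:rec:2}, together with $\mathcal T(0,0)=1$ and the vanishing outside $\NN^2$, is equivalent to
$$(1 - Ay - Bx - Cxy)\, G = 1,$$
so $G = Q^{-1}$ with $Q = 1 - Ay - Bx - Cxy \in M[x,y]$. Here $Q$ is invertible in $M[[x,y]]$ because its constant term is $1$. The coefficients do not commute, so we work throughout in the noncommutative ring $M[[x,y]]$, where $x,y$ are central.

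The key step is a Christol/Cartier-type argument adapted to matrix coefficients. First we pass to a commutative setting. The entries of $Q^{-1}$ are of the form $\mathrm{adj}(Q)/\det(Q)$, where $D = \det Q \in F[x,y]$ is a polynomial with $D(0,0)=1$, and the adjugate $R = \mathrm{adj}(Q) \in M[x,y]$ has bounded degree. So $G = R\cdot D^{-1}$. Now write $D^{-1} = D^{p-1}\cdot (D^{p})^{-1}$ and use $D^p(x,y) = D^{(p)}(x^p,y^p)$, where $D^{(p)}$ is $D$ with Frobenius applied to the coefficients; since $F$ is finite, Frobenius has finite order on $F$.

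For $\bbeta \in \mathcal I^2$ with $\mathcal I=\{0,\dots,p-1\}$, define the Cartier operators $\Lambda_{\bbeta}\bigl(\sum c_{m,n}x^my^n\bigr) = \sum c_{pm+\beta_1,pn+\beta_2} x^my^n$. These are $F$-semilinear (twisted by inverse Frobenius), and they satisfy $\Lambda_{\bbeta}(H\cdot K(x^p,y^p)) = \Lambda_{\bbeta}(H)\,K$ up to the twist. Consider the finite-dimensional $F_p$-space
$$W = \{ H\cdot D^{-1} : H \in M[x,y],\ \deg H \le N\}$$
for $N$ large. Then
$$\Lambda_{\bbeta}(H D^{-1}) = \Lambda_{\bbeta}(H D^{p-1})\cdot (D^{(p^{-1})})^{-1}.$$
This is not literally in $W$ because of the coefficient twist. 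We handle that by enlarging $W$ to include all Galois conjugates $D^{(\sigma)}$, or more simply by noting that $F$ is finite, so we may take $W$ to be the $\FF_p$-span over all $\sigma\in\mathrm{Gal}(F/\FF_p)$. The degree bound $\deg \Lambda_{\bbeta}(HD^{p-1}) \le (N + (p-1)\deg D)/p \le N$ holds for $N \ge \deg D$. Hence $W$ is a finite set stable under every $\Lambda_{\bbeta}$.

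Next we build the substitution. Take the color set $\overline W = W$ (or its $M$-valued version), and let $\overline{\mathcal T}(\balpha)$ be the element $\Lambda_{\balpha}$-iterate, i.e. the unique $H_{\balpha}\in W$ reached from $G$ by applying the digit operators of $\balpha$ in base $p$. Concretely, define $\overline{\mathcal T}(\balpha) = \Lambda_{\balpha}^{*}(G)$ via base-$p$ expansion. Note that the $p$-adic expansion of $\balpha$ is ambiguous with trailing zeros, but $\Lambda_{0}(G)$-consistency ensures this is fine. We then set $\mathcal S_H(\bbeta) = \Lambda_{\bbeta}(H)$, and recover $\mathcal T$ through $\tau(H) = H(0,0)$, the constant term. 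This $\tau$ is linear, and the substitution is linear in $H$ up to the Frobenius twist. To get genuine linearity over $F$ as claimed, we use a fully $F$-linear formulation in which the colors are pairs consisting of a coefficient datum and a Galois index. Tiles outside $\NN^2$ are colored $0$, which is fixed by all $\Lambda$.

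The main obstacle is ensuring that the color set can be taken to be $M$ itself with $\mathcal S$ and $\tau$ linear, as stated. What we would try first is to avoid the determinant entirely and work directly with $Q$ in the noncommutative ring. The idea is to write
$$Q^{-1} = Q^{p-1}\cdot (Q^p)^{-1}$$
only after checking whether $Q^p$ is a series in $x^p,y^p$. It is not, in general, for noncommuting $A,B,C$. The fallback is therefore the commutative determinant route above, which gives a finite linear substitution on a finite-dimensional space of matrices. That space is identified with a power of $M$, and by a linear change of coordinates a single $M$-valued presentation follows, up to the bookkeeping of Frobenius twists.
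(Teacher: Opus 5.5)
There is a genuine gap at the point where you turn the $\Lambda$-stable space $W$ into a substitution. The operators $\Lambda_{\bbeta}$ strip off the \emph{least} significant base-$p$ digit. For $\balpha=\sum_{i=0}^{r}p^i\bbeta_i$ one has $\mathcal T(\balpha)=\tau(\Lambda_{\bbeta_r}\cdots\Lambda_{\bbeta_0}G)$, with $\tau$ the constant term. So $W$ gives an automaton that reads the digits from least to most significant (the Christol/$p$-kernel direction). Invariance under a substitution, $\overline{\mathcal T}(p\balpha+\bbeta)=\mathcal S_{\overline{\mathcal T}(\balpha)}(\bbeta)$, requires the opposite direction: the colour at $\balpha$ must determine the colour obtained after appending a new least significant digit.

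With your choice $\mathcal S_H(\bbeta)=\Lambda_{\bbeta}(H)$ and $\overline{\mathcal T}(0,0)=G$, invariance at the origin forces $\Lambda_{(0,0)}G=G$, i.e.\ $\mathcal T(p\balpha)=\mathcal T(\balpha)$ for all $\balpha$. On the axis, where $\mathcal T(0,n)=A^n$, this means $A^p=A$, which fails e.g.\ for $A=\left(\begin{smallmatrix}1&1\\0&1\end{smallmatrix}\right)$ over $\FF_3$. Worse, invariance forces $\overline{\mathcal T}(\balpha)=\Lambda_{\bbeta_0}\cdots\Lambda_{\bbeta_r}\overline{\mathcal T}(0,0)$. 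Its constant term is the coefficient of $\overline{\mathcal T}(0,0)$ at the digit-reversed index $\sum_i p^{r-i}\bbeta_i$, not at $\balpha$.

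Your remark on ``$\Lambda_0(G)$-consistency'' does not repair this. With the order that does recover $\mathcal T$, the element $H_{\balpha}=\Lambda_{\bbeta_r}\cdots\Lambda_{\bbeta_0}G$ changes to $\Lambda_{(0,0)}H_{\balpha}$ when a leading zero digit is added; only its constant term is well defined. Moreover, $H_{p\balpha+\bbeta}$ is in general not a function of $H_{\balpha}$.

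The missing idea is to transpose the linear representation. Take the colour set $\mathrm{Hom}_F(W,M)$. Set $\overline{\mathcal T}(\balpha)=\tau\circ\Lambda_{\bbeta_r}\circ\dots\circ\Lambda_{\bbeta_0}$ for $\balpha\in\NN^2$ and $0$ otherwise; this is well defined since $\tau\circ\Lambda_{(0,0)}=\tau$. Use the linear substitution $\phi\mapsto(\phi\circ\Lambda_{\bbeta})_{\bbeta\in\mathcal I^2}$ and the output map $\phi\mapsto\phi(G)$. Then $\overline{\mathcal T}(p\balpha+\bbeta)=\overline{\mathcal T}(\balpha)\circ\Lambda_{\bbeta}$ and $\overline{\mathcal T}(\balpha)(G)=\mathcal T(\balpha)$, as required.

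Two further corrections are needed.

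First, your $\Lambda_{\bbeta}$, defined by extracting coefficients, is $F$-linear, not semilinear, and $\Lambda_{\bbeta}(HK(x^p,y^p))=\Lambda_{\bbeta}(H)K$ holds exactly. The twist lives only in $D^p=D^{(p)}(x^p,y^p)$, so it is $D^{(p)}$, not $D^{(p^{-1})}$, that appears. The twist disappears if you replace $D$ by its norm $E=\prod_{j=0}^{e-1}D^{(p^j)}\in\FF_p[x,y]$, where $|F|=p^e$. Then $E^p=E(x^p,y^p)$, and for $N$ large the $F$-subspace $W=\{HE^{-1}:H\in M[x,y],\ \deg H\le N\}$ contains $G$ and is stable under every $\Lambda_{\bbeta}$. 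No ``Galois index'' is needed.

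Second, your last step, identifying the colour space with $M$ ``by a linear change of coordinates'', is not valid. A change of coordinates cannot lower the dimension, and $\dim_F\mathrm{Hom}_F(W,M)$ is typically far larger than $k^2$. Read with $F$-linear substitutions, a colour set of dimension exactly $\dim M$ is in fact impossible in general. Take $k=1$, $B=C=0$ and $A\in F\setminus\FF_p$. The map $v\mapsto\mathcal S_v(0,0)$ is a scalar fixing $\overline{\mathcal T}(0,0)\neq 0$, hence is the identity. This forces $\overline{\mathcal T}(0,p)=\overline{\mathcal T}(0,1)$, hence $A^p=A$, a contradiction.

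What the repaired argument proves is the statement with some finite-dimensional $F$-space of colours. That is exactly what is used in Proposition~\ref{prop:anti-tori:substitution}. The paper itself obtains Proposition~\ref{prop:autom-seq} by citing \cite[Corollary~1.6 and Remark~1.10]{CapVas}.
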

\begin{proof}
This follows from Corollary~1.6 and Remark~1.10 in \cite{CapVas}. 
\end{proof}
	
Let us now return to the canonical tiling $w \colon \ZZ^2 \to V_1 \oplus V_2$ defined by the flat plane $P_{v_1, v_2} \subset T_1 \times T_2$ as above. %associated with the pair of generators $(v_1, v_2) \in V_1 \times V_2$. 

\begin{prop}\label{prop:anti-tori:substitution}
Let $(v_1, v_2) \in V_1 \times V_2$ be a pair of generators of $\Gamma_{\mathcal A}$ and $w \colon \ZZ^2 \to V_1 \oplus V_2$ be the canonical tiling defined by the flat plane $P_{v_1, v_2} \subset T_1 \times T_2$. 

Then $w$ is generated by a substitution of length $p = \mathrm{char}(F)$. Moreover there exist $s\geq 0$ and $t \geq 1$ such that the tiling $w^{p^s}$ by blocks of size $p^s$ determined by $w$ is invariant under a substitution of length $p^t$. 
\end{prop}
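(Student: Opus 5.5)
The plan is to deduce the first assertion from Proposition~\ref{prop:autom-seq}, applied quadrant by quadrant, and then to obtain the second assertion by a pigeonhole argument on kernels of linear maps.

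\textbf{Step 1: the first quadrant.} I would identify the recursion of Proposition~\ref{prop:recursive} with the one in Proposition~\ref{prop:autom-seq}. Take $k=\dim V_1+\dim V_2$, so that $\mathrm{End}(V_1\oplus V_2)=M$, and set
$$A=\left(\begin{matrix} a & b \\ 0 & 1\end{matrix}\right),\quad B=\left(\begin{matrix}1 & 0 \\ c & d\end{matrix}\right),\quad C=-\left(\begin{matrix} a & b \\ c & d\end{matrix}\right),$$
with $\mathcal T(m,n)=0$ off $\NN^2$. On $\NN^2$ the two tilings agree. In the interior this is because (\ref{eq:rec:2}) is exactly the recursion of Proposition~\ref{prop:recursive}. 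On the axes it holds because, for instance, $\mathcal T(0,n)=A\mathcal T(0,n-1)$ reproduces $A^n$, since the other two terms vanish. Hence $w(m,n)=\tau(\overline{\mathcal T}(m,n))\binom{v_1}{v_2}$ on $\NN^2$, where $\overline{\mathcal T}$ is invariant under a linear substitution $S$ of length $p$. So on $\NN^2$, $w$ is generated by a substitution via the linear map $X\mapsto \tau(X)\binom{v_1}{v_2}$.

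\textbf{Step 2: the other three quadrants.} I would rerun Proposition~\ref{prop:recursive} from the base vertex in the three other directions. Reading the square relations backwards expresses the labels through the inverse and dual automata, which by Remark~\ref{rem:bi-reversible} are again of matrix type. This should give analogous matrix recursions in the reflected coordinates $(-1-m,n)$, $(m,-1-n)$ and $(-1-m,-1-n)$, each with its own substitution of length $p$.

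To glue the four quadrants, I would take as colour set the disjoint union $W$ of four copies of $M$, reflecting each substitution so that the map $\bbeta\mapsto$ block is reversed in the reflected coordinates. The key point is that $\balpha\mapsto p\balpha+\bbeta$ preserves each quadrant: if $\alpha<0$ then $p\alpha+\beta\le -1$. The glued tiling $\overline{w}\colon\ZZ^2\to W$ is therefore invariant under a substitution of length $p$, and $w=\tau'\circ\overline{w}$. The main technical care lies in this step: checking that the reversed recursions really are of the shape required by Proposition~\ref{prop:autom-seq}, with the correct boundary data.

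\textbf{Step 3: the blocks $w^{p^s}$.} Iterating gives $\overline{w}(p^s\balpha+\bbeta)=S^s_{\overline w(\balpha)}(\bbeta)$, so $w^{p^s}(\balpha)=\Phi_s(\overline w(\balpha))$, where $\Phi_s(X)=(\bbeta\mapsto\tau'(S^s_X(\bbeta)))$. I would work one quadrant copy at a time, where $\Phi_s$ is linear with kernel $K_s\le M$.

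Since $M$ has only finitely many subspaces, there exist $s\ge0$ and $t\ge1$ with $K_s=K_{s+t}$. I then claim that $\Phi_s(X)$ determines $\Phi_s(S^t_X(\bbeta))$ for all $\bbeta\in\{0,\dots,p^t-1\}^2$. Indeed, $\Phi_s(S^t_X(\bbeta))$ is a sub-block of $\Phi_{s+t}(X)$. Hence if $\Phi_s(X)=\Phi_s(Y)$, then $X-Y\in K_s=K_{s+t}$, which forces $\Phi_s(S^t_X(\bbeta))=\Phi_s(S^t_Y(\bbeta))$ by linearity.

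This yields a well-defined substitution of length $p^t$ on the finite colour set $\Phi_s(M)$, under which $w^{p^s}$ is invariant. For the four quadrants simultaneously, I would take $(s,t)$ working for all four, by choosing a common repetition in the $4$-tuple of kernels. I expect this finiteness step to be routine; the substantive work is the reduction to Proposition~\ref{prop:autom-seq} in Steps~1 and~2.
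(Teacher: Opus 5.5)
Your Steps 1 and 2 follow the paper's route. For the first assertion, gluing by a disjoint union of four copies of $M$, using that $\balpha\mapsto p\balpha+\bbeta$ preserves quadrants, is a valid alternative to the paper's direct sum $M_1\oplus\dots\oplus M_4$ with $\mathcal L=\sum_i\overline{\mathcal T}_i$.

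The gap is in Step 3, where the paper simply invokes \cite[Corollary~2.2]{CapVas}. Because your $W$ is a disjoint union, $\Phi_s$ is linear only on each copy, so you pigeonhole quadrant by quadrant. This shows that within a fixed quadrant $i$ the block $\Phi^i_s(X)$ determines its superblock $(\Phi^i_s(S^{i,t}_X(\bbeta)))_{\bbeta}$. But the colour set of $w^{p^s}$ is $\bigcup_i\Phi^i_s(M)$. The same block may occur as $\Phi^i_s(X)$ in quadrant $i$ and as $\Phi^j_s(Y)$ in a quadrant $j\neq i$, and nothing in your argument relates the two superblocks. So the substitution you define on the block colours need not be well defined, and a common repetition of the $4$-tuple of per-quadrant kernels is not enough.

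This failure can occur. Take the one-dimensional case over $\FF_2$ with $w(n)=1$ for $n\geq -1$ and $w(n)=0$ for $n\leq -2$.
\begin{itemize}
\item The right half is invariant under $x\mapsto(x,x)$ and the left half under $x\mapsto(0,x)$.
\item Both per-half kernels vanish for every $s$, so your recipe allows $(s,t)=(0,1)$.
\item Yet $w(0)=w(-1)=1$ while $(w(0),w(1))=(1,1)\neq(0,1)=(w(-2),w(-1))$, so $w$ is not invariant under any substitution of length $2$.
\end{itemize}

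The repair is to keep global linearity, as the paper does. Regard the four copies as the summands of $W=M^{\oplus 4}$. The glued tiling is still invariant under the direct-sum substitution, since each value lies in one summand and linear substitutions send $0$ to $0$. Moreover $\tau'$ extends linearly, so $\Phi_s$ is linear on all of $W$. Its kernel $K_s\leq W$ now also contains differences of elements from distinct summands that produce the same block. Pigeonholing on subspaces of $W$ to get $K_s=K_{s+t}$ then makes your sub-block argument valid on all of $\ZZ^2$; in the example above it yields $(s,t)=(1,1)$.
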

\begin{proof} 
Restricting $w$ to each of the four quadrants of the plane $P_{v_1, v_2}$ determined by the axes $\ell_1$ and $\ell_2$, we obtain four tilings $\mathcal T_i \colon \ZZ^2 \to V_1 \oplus V_2$, defined as follows. The tiling $\mathcal T_1$ is defined by setting $\mathcal T_1(m, n) = w(m, n)$ if $m, n \geq 0$, and $\mathcal T_1(m, n) = 0$ else. We set $\mathcal T_2(m, n) = w(m, n)$ if $m <0 $ and $n \geq 0$, and $\mathcal T_1(m, n) = 0$ else. We set  $\mathcal T_3(m, n) = w(m, n)$ if $m <0 $ and $n < 0$, and $\mathcal T_3(m, n) = 0$ else. Finally, we set  $\mathcal T_4(m, n) = w(m, n)$ if $m \geq 0  $ and $n < 0$, and $\mathcal T_4(m, n) = 0$ else. Observe that, by definition, we have
$w = \mathcal T_1 + \mathcal T_2 + \mathcal T_3 + \mathcal T_4$. 

By Propositions~\ref{prop:recursive} and~\ref{prop:autom-seq}, the tiling $\mathcal T_1$ is generated by a substitution of length $p$. We let  $\overline{\mathcal T}_1 \colon \ZZ^2 \to M_1$ be the substitution-invariant tiling and   $\tau_1 \colon M_1 \to M$ be the linear map afforded by  those results, where $M_1$ is a copy of $M$. Moreover we have  $\mathcal T_1 = \sigma_1 \circ  \overline{\mathcal T}_1$, where $\sigma_1 \colon M_1 \to V_1 \oplus V_2$ is the map $\tau_1$ post-composed with the evaluation at the vector $\left(\begin{matrix} v_1 \\ v_2 \end{matrix}\right)$. 

A similar result applies for the tiling $\mathcal T_i$ for each $i \in \{2, 3, 4\}$. This can be established by mimicking the argument given for $i=1$. Alternatively, one may observe that $\mathcal T_2$ is the image under a reflection of a tiling of the first quadrant, associated with the inverse automaton $\mathcal A^{-1}$, which is also of matrix type (see Remark~\ref{rem:bi-reversible}). A substitution may be viewed as a map defined on all $2$-dimensional tilings with a given color set; that map commutes with the shifts and the reflections. In this way, we see that Propositions~\ref{prop:recursive} and~\ref{prop:autom-seq} also apply to $\mathcal T_2$. Similarly, they apply to $\mathcal T_3$ and $\mathcal T_4$, which are isometric images of the tilings of the first quadrant associated with the automata of matrix type $(\mathcal A^*)^{-1}$ and  $((\mathcal A^{-1})^*)^{-1}$. Therefore, for each $i=1, \dots, 4$, we obtain a tiling $\overline{\mathcal T}_i \colon \ZZ^2 \to M_i$ with color set a finite $F$-vector space $M_i$, invariant under a substitution of length $p$, and a linear map $\sigma_i \colon M_i \to V_1 \oplus V_2$, such that  $\mathcal T_i = \sigma_i \circ  \overline{\mathcal T}_i$. 

We set $W = M_1 \oplus M_2 \oplus M_3 \oplus M_4$. Viewing $M_i$ as a subspace of $W$, we may assume that $\overline{\mathcal T}_i$ is a tiling with color set $W$. We let $\mathcal L \colon \ZZ^2 \to W$ be the tiling defined by setting $\mathcal L = \sum_{i=1}^4 \overline{\mathcal T}_i$, and $\sigma \colon W \to V_1 \oplus V_2$ be the linear map that sends $(z_1, z_2, z_3, z_4)  \in W$ to $\sum_{i=1}^4 \sigma_i(z_i)$. By construction the tiling $\mathcal L$ is invariant under a substitution of length $p$. Moreover we have $w = \sigma \circ \mathcal L$. This confirms that $w$ is generated by a substitution of length $p$. 

The last assertion of the proposition concerning the tiling by blocks of size $p^s$ determined by $w$ now follows directly from \cite[Corollary~2.2]{CapVas}. 
\end{proof}

It follows that for any choice of a pair $(v_1, v_2) \in V_1 \oplus V_2$, the translation axes $\ell_1$, $\ell_2$ of the elements $v_1$, $v_2$, viewed as generators of $\Gamma_{\mathcal A}$, span a flat that is fractal, in the precise  sense defined above. Of course, it is possible that this flat is also periodic, for  example if one takes $v_1 =0$  and $v_2=0$. Thus, it remains to prove that the flat in question is actually non-periodic (and is thus an anti-torus) for a suitable choice of $(v_1, v_2) $. This is the goal of the next section. 

\section{Fractal anti-tori}

\begin{lem}\label{lem:affine-group}
Let $K$ be a field. Let $t_1, t_2 \in K^*$ be non-zero elements whose multiplicative order is infinite. Given $u_1, u_2 \in K$, the following conditions are equivalent. 
\begin{enumerate}[label=(\roman*)]
\item  $(u_1, t_1) $ and $(u_2, t_2)$ commute in the affine group $ K \rtimes K^*$. 

\item There exist integers $m_1, m_2 >0$ such that $(u_1, t_1)^{m_1} $ and $(u_2, t_2)^{m_2}$ commute in the affine group $ K \rtimes K^*$. 

\item $\frac {u_1} {1-t_1} = \frac {u_2} {1-t_2}$.

\end{enumerate}

\end{lem}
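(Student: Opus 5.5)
The plan is to regard $(u,t) \in K \rtimes K^*$ as the affine map $z \mapsto tz + u$ of $K$. Since $t \neq 1$, such a map has a unique fixed point $z_0 = \frac{u}{1-t}$. The proof will turn condition (iii) into the statement that the two maps have the same fixed point, and then show that each commutation condition also amounts to sharing a fixed point.

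The implication (i)$\Rightarrow$(ii) is trivial: take $m_1 = m_2 = 1$.

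For (iii)$\Rightarrow$(i), set $z_0 = \frac{u_1}{1-t_1} = \frac{u_2}{1-t_2}$. Conjugating by the translation $z \mapsto z - z_0$ turns both maps into pure dilations $z \mapsto t_i z$. Dilations commute, so the two original maps commute.

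For (ii)$\Rightarrow$(iii), I would do two computations.
\begin{enumerate}
\item First, compute powers directly. We have $(u,t)^m = \bigl(u\frac{1-t^m}{1-t},\, t^m\bigr)$. Because $t$ has infinite order, $t^m \neq 1$, so the fixed point of $(u,t)^m$ is $\frac{u(1-t^m)/(1-t)}{1-t^m} = \frac{u}{1-t}$. A power therefore has the same fixed point as the original map.
\item Second, compute a commutator. Two affine maps $(u,s)$ and $(u',s')$ with $s, s' \neq 1$ commute exactly when $su' + u = s'u + u'$. This is equivalent to $u'(s-1) = u(s'-1)$, that is, to $\frac{u}{1-s} = \frac{u'}{1-s'}$.
\end{enumerate}
Apply the second computation to $s = t_1^{m_1}$ and $s' = t_2^{m_2}$. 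Both differ from $1$ since $t_1, t_2$ have infinite order. So commutation of the powers means the powers have equal fixed points. By the first computation those are the fixed points of the original maps, which gives (iii).

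The only place to be careful is using the infinite-order hypothesis to guarantee $t_i^{m_i} \neq 1$. Without it the powers could be pure translations with no fixed point, and the fixed-point formulas would break down. With that noted, the whole argument is elementary algebra.
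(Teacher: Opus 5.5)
Your proof is correct and follows essentially the same route as the paper: view $(u,t)$ as the affine map $z\mapsto tz+u$, identify its unique fixed point $\frac{u}{1-t}$, use $(u,t)^m=\bigl(u\frac{1-t^m}{1-t},t^m\bigr)$ together with the infinite-order hypothesis to see that positive powers have the same fixed point, and use that commuting is equivalent to sharing that fixed point. The only difference is cosmetic: you verify the last equivalence by computing $su'+u=s'u+u'$ directly, whereas the paper deduces it from sharp $2$-transitivity of the affine group, since point stabilizers are conjugate to the abelian group $K^*$ and non-trivial elements have at most one fixed point.
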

\begin{proof}
Consider the action of the affine group $ K \rtimes K^*$ on the affine line, defined by $(u, t) \colon K \to K : z \mapsto  t z  + u$. The action is sharply $2$-transitive, and the stabilizer of a point is conjugate to the multiplicative group $K^*$. Thus each non-trivial element has at most one fixed point, and two non-trivial elements fixing a point commute if and only if they fix the same point. 

If $u\neq 0$ and $t \neq 1$,  the unique fixed point of $(u, t) \in K \rtimes K^*$ is $\frac u {1-t}$. Moreover, we have $(u, t)^m = (u(1+ t + \dots + t^{m-1}), t^m) = (u \frac {1-t^m}{1-t}, t^m)$ for any integer $m \geq 0$. Thus, if the multiplicative order of $t$ is infinite, then all the positive powers of $(u, t)$ have the same fixed point. The required conclusion follows. 
\end{proof}

\begin{lem}\label{lem:polynome}
Retain the notation of Theorem~\ref{thm:main}. Let 
$$\mathbf d(x, y) = \det\left(\begin{matrix} 1-ax & bx \\ cy & 1-dy \end{matrix}\right) \in F[x, y].$$ 
The following conditions are equivalent.
\begin{enumerate}[label=(\roman*)]
\item $\mathbf d(x, y)= \det(1-ax) \det(1-dy)$.
\item There exist polynomials $f_1 \in F[x]$ and $f_2 \in F[y]$ such that $\mathbf d(x, y) = f_1(x) f_2(y)$. 
\end{enumerate}
	
\end{lem}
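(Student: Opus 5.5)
(i)$\Rightarrow$(ii) is immediate: take $f_1(x)=\det(1-ax)$ and $f_2(y)=\det(1-dy)$. So the plan concentrates on (ii)$\Rightarrow$(i), and the idea is to recover $f_1$ and $f_2$ by specializing one of the variables to zero.

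First evaluate $\mathbf d$ at $y=0$. The matrix becomes $\left(\begin{matrix} 1-ax & bx \\ 0 & 1 \end{matrix}\right)$, which is block upper triangular, so
$$\mathbf d(x,0)=\det(1-ax).$$
Similarly, setting $x=0$ gives the block lower triangular matrix $\left(\begin{matrix} 1 & 0 \\ cy & 1-dy \end{matrix}\right)$, so
$$\mathbf d(0,y)=\det(1-dy).$$
At the origin both specializations give $\mathbf d(0,0)=\det(1)=1$. These three identities are the only input needed about $\mathbf d$.

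Now assume $\mathbf d(x,y)=f_1(x)f_2(y)$. Specializing gives
$$f_1(x)f_2(0)=\det(1-ax),\qquad f_1(0)f_2(y)=\det(1-dy),\qquad f_1(0)f_2(0)=1.$$
In particular $f_1(0)f_2(0)=1$, so both constants are nonzero. Multiplying the first two relations yields
$$\det(1-ax)\det(1-dy)=f_1(x)f_2(y)\,f_1(0)f_2(0)=f_1(x)f_2(y)=\mathbf d(x,y),$$
which is (i).

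There is essentially no obstacle here. The only point needing care is the block-triangular determinant computation, $\det\left(\begin{smallmatrix} P & Q\\ 0 & R\end{smallmatrix}\right)=\det P\det R$, applied with blocks acting on $V_1$ and $V_2$. This holds over any commutative ring, so it is valid in $F[x]$ and $F[y]$.
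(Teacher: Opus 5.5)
Your proof is correct and matches the paper's argument: you specialize at $y=0$, at $x=0$ and at the origin to get $\det(1-ax)=f_1(x)f_2(0)$, $\det(1-dy)=f_1(0)f_2(y)$ and $f_1(0)f_2(0)=1$, then multiply the first two. The one addition is that you justify the block-triangular determinant evaluations explicitly, which the paper leaves implicit.
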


\begin{proof}
Suppose that (ii) holds. Then we have $ \det(1-ax)  = \mathbf d(x, 0) = f_1(x) f_2(0)$ and $ \det(1-dy) = \mathbf d(0, y) = f_1(0) f_2(y)$. Multiplying both equalities, we obtain that  $ \det(1-ax) \det(1-dy) = f_1(x) f_2(y)$ since  $ f_1(0) f_2(0) = \mathbf d(0, 0) = 1$. Hence (i) holds. The converse implication is clear. 
\end{proof}

The ring $F[x, y]$ is a unique factorization domain. In view of Lemma~\ref{lem:polynome}, the condition provided by the last hypothesis of Theorem~\ref{thm:main} ensures that the polynomial $d(x, y)$ has at least one irreducible factor $f \in F[x, y]$ that is neither in $F[x]$ nor in $F[y]$. Since $f$ is irreducible, the quotient ring $F[x, y]/ (f)$ is a domain. We define  
\begin{equation}\label{eq:K}
	K = \mathrm{Frac}(F[x, y]/ (f))
\end{equation}
as its field of fractions. We identify $x$ and $y$ with their natural images in $K$. 

\begin{lem}\label{lem:K}
$K$ is a global function field. The elements $x, y \in K$ are transcendental over $F$. In particular their multiplicative order in $K^*$ is infinite. 
\end{lem}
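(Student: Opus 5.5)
The plan is to exploit that $f$ is an irreducible polynomial in $F[x,y]$ involving both variables, so that $F[x,y]/(f)$ is a finitely generated $F$-algebra and a domain of Krull dimension one.

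First, the dimension count. Since $F[x,y]$ has dimension $2$ and $(f)$ is a nonzero prime ideal, the quotient $F[x,y]/(f)$ has Krull dimension $1$. Hence its fraction field $K$ has transcendence degree $1$ over $F$. It is finitely generated over $F$, namely by $x$ and $y$, and $F$ is a finite field. Therefore $K$ is a global function field.

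Next, transcendence of $x$. Suppose for a contradiction that $x$ is algebraic over $F$. Then $x$ satisfies some nonzero polynomial $g(x)\in F[x]$ in $K$, so $g(x)\in (f)$ in $F[x,y]$ and $f$ divides $g$. Since $f$ is irreducible, it divides an irreducible factor of $g$ in $F[x]$. Degree considerations in $y$ show that any divisor of a nonzero element of $F[x]$ lies in $F[x]$. So $f$ would be an associate of an element of $F[x]$, contradicting the choice of $f$ as lying neither in $F[x]$ nor in $F[y]$. The symmetric argument shows $y$ is transcendental over $F$.

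Finally, multiplicative order. If $x^n=1$ for some $n>0$, then $x$ would be a root of $X^n-1\in F[X]$, hence algebraic over $F$, which we have just ruled out. The same holds for $y$.

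The step needing the most care is the dimension and transcendence-degree claim. I would justify it by noting that, say, $x$ is transcendental and $y$ is algebraic over $F(x)$ via $f$: indeed $f$ has positive $y$-degree, and its leading coefficient in $y$ is a nonzero polynomial in $x$, so it does not vanish in $K$. This gives $\operatorname{trdeg}_F K=1$ directly, avoiding any appeal to Krull dimension theory.
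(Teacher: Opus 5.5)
Your proof is correct and matches the paper's: $x$ and $y$ are transcendental because a nonzero polynomial in one variable cannot be divisible by $f$, and your $y$-degree count makes explicit what the paper leaves implicit. $K$ is a global function field because $y$ is algebraic over $F(x)$ via $f$, exactly as in your final paragraph, while the Krull-dimension route you sketch first is valid but unnecessary.
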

\begin{proof}
If $x \in K$ (resp. $y \in K$) were algebraic over $F$, it would be a root of a polynomial with coefficients in $F$ that would be divisible by $f$. This is not the case by the choice of the irreducible polynomial $f$. Thus $x$ and $y$ are transcendental over $F$. By construction $K$ is generated by $x$ and $y$. Since $y$ is algebraic over $F(x)$, this confirms that   $K$ is a finite algebraic extension of the  field $F(x)$. 
\end{proof}
	
\begin{prop}\label{prop:rep}
Let $\mathcal A = (V_1, V_2, a, b, c, d)$ satisfy the hypotheses of Theorem~\ref{thm:main} and let $K$ be the global function field defined by (\ref{eq:K}). Then there exist $F$-linear maps $\phi_1 \colon V_1 \to K$ and $\phi_2 \colon V_2 \to K$, not both equal to zero, such that the assignments
$$V_1 \to K \rtimes K^* : v_1 \mapsto (\phi_1(v_1), y) \qquad \text{and}\qquad 
V_2 \to K \rtimes K^* : v_2 \mapsto (\phi_2(v_2), x) $$
extend to a homomorphism $\rho \colon \Gamma_{\mathcal A} \to K \rtimes K^*$.
\end{prop}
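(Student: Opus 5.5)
The plan is to reduce the existence of $\rho$ to a linear system over $K$. We then solve that system using the vanishing of $\mathbf d(x,y)$ in $K$, which holds because the irreducible factor $f$ divides $\mathbf d$.

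First we compute what the defining relators of $\Gamma_{\mathcal A}$ require. In $K \rtimes K^*$ we use the multiplication $(u,t)(u',t') = (u + tu', tt')$, which matches the action $z \mapsto tz+u$ from Lemma~\ref{lem:affine-group}. For $(v_1,v_2) \in V_1 \oplus V_2$, the relator $v_1\cdot(cv_1+dv_2) = v_2 \cdot (av_1+bv_2)$ maps to
$$\big(\phi_1(v_1) + y\,\phi_2(cv_1+dv_2),\, yx\big) \quad\text{and}\quad \big(\phi_2(v_2) + x\,\phi_1(av_1+bv_2),\, xy\big).$$
The second coordinates agree automatically since $K$ is commutative. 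By linearity, equality of the first coordinates for all $(v_1,v_2)$ is equivalent to the two identities
$$\phi_1\circ(1-xa) + y\,\phi_2\circ c = 0 \quad\text{on } V_1, \qquad x\,\phi_1 \circ b + \phi_2\circ(1-yd) = 0 \quad\text{on } V_2.$$
So, by von Dyck's theorem applied to the presentation of $\Gamma_{\mathcal A}$, it suffices to find $F$-linear maps $\phi_1,\phi_2$, not both zero, satisfying these two identities.

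Next we pass to $K$. Extend scalars to $W_K = (V_1 \oplus V_2)\otimes_F K$. An $F$-linear pair $(\phi_1,\phi_2)$ is the same thing as a $K$-linear functional $\psi$ on $W_K$, via restriction to $V_1\oplus V_2$. The two identities say exactly that
$$\psi \circ \left(\begin{matrix} 1-ax & bx \\ cy & 1-dy \end{matrix}\right) = 0,$$
where the block operator is the $K$-linear endomorphism of $W_K$ obtained from the scalars $x,y\in K$. Indeed, the first block column gives $\phi_1(1-xa) + \phi_2(yc)$ and the second gives $\phi_1(xb)+\phi_2(1-yd)$. The determinant of this operator is the image of $\mathbf d(x,y)$ in $K$. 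Since $f$ divides $\mathbf d$ by the choice of $f$, this determinant is zero. Hence the operator is singular over the field $K$, so it has a nonzero left kernel, and we may pick a nonzero functional $\psi$ in it.

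Finally, set $\phi_i = \psi|_{V_i}$. These maps are $F$-linear. They are not both zero, since $V_1\oplus V_2$ spans $W_K$ over $K$ and $\psi \neq 0$. The identities then hold, so $\rho$ is well defined and the assignments extend to a homomorphism; $y$ and $x$ are nonzero in $K$ by Lemma~\ref{lem:K}, so the images lie in $K \rtimes K^*$. The only delicate step is keeping the sign and ordering conventions in the affine group consistent, so that the resulting matrix is exactly the one whose determinant is $\mathbf d(x,y)$. With the convention above it matches.
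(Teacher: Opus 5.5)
Your proposal is correct and follows essentially the same route as the paper. Both arguments use the presentation to reduce the problem to the identities $\phi_1 - x\phi_1 a + y\phi_2 c = 0$ and $\phi_2 - y\phi_2 d + x\phi_1 b = 0$, read these as saying that $(\phi_1,\phi_2)$ lies in the left kernel of the block matrix whose determinant is the image of $\mathbf d(x,y)$ in $K$, and conclude because that image vanishes since $f$ divides $\mathbf d$. Your extension of scalars to $(V_1\oplus V_2)\otimes_F K$ is just a coordinate-free version of the paper's identification of $\mathrm{Hom}_F(V_i,K)$ with $K^{d_i}$, and you also make explicit two points the paper leaves tacit: that $x,y\neq 0$ in $K$, and that a nonzero $\psi$ restricts to a pair that is not identically zero.
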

\begin{proof}
In the group $\Gamma_{\mathcal A}$, each pair $(v_1, v_2) \in V_1 \oplus V_2$ satisfies the defining relation
$$v_1 \cdot (cv_1 + d v_2) = v_2 \cdot (av_1 + d v_2).$$
Moreover, those are the only defining relations of $\Gamma_{\mathcal A}$. Hence, for the assignments above to extend to a homomorphism as required,  it suffices to ensure that the image of each pair $(v_1, v_2)$ satisfies the required relation. Thus we need to construct $F$-linear maps $\phi_1 \colon V_1 \to K$ and $\phi_2 \colon V_2 \to K$ that satisfy the condition 
$$(\phi_1(v_1), y) (\phi_2(cv_1+ dv_2), x) = (\phi_2(v_2), x)(\phi_1(av_1 + bv_2), y)$$
for all $(v_1, v_2) \in V_1 \oplus V_2$. 
This is the case if and only if the equality
$$(\phi_1 + y \phi_2   c -x \phi_1 a)v_1 = (\phi_2 + x\phi_1 b - y\phi_2 d) v_2$$
holds for all $(v_1, v_2) \in V_1 \oplus V_2$, which is in turn equivalent to 
$$ \phi_1 + y \phi_2   c -x \phi_1 a = 0 \qquad \text{and} \qquad \phi_2 + x\phi_1 b - y\phi_2 d = 0.$$
Let us now identify the vector space of $F$-linear maps $V_i \to K$ with $K^{d_i}$, where $d_i = \dim(V_i)$. We shall view $(\phi_1, \phi_2)$ as a vector of   the $K$-vector space $K^{d_1} \oplus K^{d_2}$, and the matrix $\left(
\begin{matrix}
1-xa & xb\\
yc & 1-yd
\end{matrix}
\right)$ as a $K$-linear operator of that same space. Hence the last equation can be formulated in block matrix form as follows:
$$(\phi_1, \phi_2)\left(
\begin{matrix}
1-xa & xb\\
yc & 1-yd
\end{matrix}
\right) = (0, 0). $$
Hence, to ensure the existence of a non-zero pair $(\phi_1, \phi_2)$ as required, it suffices to verify that the block matrix $\left(
\begin{matrix}
1-xa & xb\\
yc & 1-yd
\end{matrix}
\right)$, viewed as  a linear operator on $K^{d_1} \oplus K^{d_2}$, has zero determinant. This is indeed the case by the very definition of $K$ (see (\ref{eq:K})). 
\end{proof}
	
\begin{cor}\label{cor:non-commuting-pair}
There exist $v_1 \in V_1$ and $v_2 \in V_2$ such that for all integers $m_1, m_2  >0$, the elements $v_1^{m_1}$ and $v_2^{m_2}$ do not commute in $\Gamma_{\mathcal A}$. 
\end{cor}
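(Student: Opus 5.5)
We push the question into the affine group $K \rtimes K^*$ through the homomorphism $\rho \colon \Gamma_{\mathcal A} \to K \rtimes K^*$ of Proposition~\ref{prop:rep}. For $v_1 \in V_1$ and $v_2 \in V_2$ we have $\rho(v_1) = (\phi_1(v_1), y)$ and $\rho(v_2) = (\phi_2(v_2), x)$. By Lemma~\ref{lem:K}, both $x$ and $y$ have infinite multiplicative order in $K^*$. So Lemma~\ref{lem:affine-group} applies, with $t_1 = y$ and $t_2 = x$. It says that some positive powers of $\rho(v_1)$ and $\rho(v_2)$ commute if and only if
$$\frac{\phi_1(v_1)}{1-y} = \frac{\phi_2(v_2)}{1-x}.$$
If this equality fails, then for all $m_1, m_2 > 0$ the images $\rho(v_1^{m_1})$ and $\rho(v_2^{m_2})$ do not commute. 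Hence $v_1^{m_1}$ and $v_2^{m_2}$ do not commute in $\Gamma_{\mathcal A}$. It therefore suffices to find $(v_1, v_2)$ violating this equality.

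By Proposition~\ref{prop:rep}, $\phi_1$ and $\phi_2$ are not both zero.

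Suppose first that $\phi_1 \neq 0$. Choose $v_1$ with $\phi_1(v_1) \neq 0$ and take $v_2 = 0$. The right-hand side is then $0$, while the left-hand side is nonzero, because $1 - y \neq 0$ in $K$ (as $y$ is transcendental over $F$). So the equality fails.

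If instead $\phi_1 = 0$, then $\phi_2 \neq 0$. Take $v_1 = 0$ and $v_2$ with $\phi_2(v_2) \neq 0$; the equality fails symmetrically.

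The only step needing care is the use of Lemma~\ref{lem:affine-group}. Its proof discussed fixed points only for elements with $u \neq 0$ and $t \neq 1$. Here $t \in \{x, y\}$ always satisfies $t \neq 1$. When $u = 0$, the element $(0, t)$ fixes the point $0 = u/(1-t)$, and its powers $(0, t^m)$ fix the same point. So the fixed-point argument, and hence the equivalence of (ii) and (iii), still holds in this case.

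Moreover every nontrivial element here is a genuine dilation, with $t^m \neq 1$, and so has exactly one fixed point. Two such elements commute exactly when they share that fixed point. Since the fixed point of each power is $u/(1-t)$, independent of $m$, the conclusion follows directly.
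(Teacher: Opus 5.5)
Your proof is correct and follows the paper's argument essentially verbatim: map to $K \rtimes K^*$ via $\rho$, choose $v_i$ with $\phi_i(v_i)\neq 0$ and $v_{3-i}=0$, and apply Lemma~\ref{lem:affine-group}, using that $x,y$ have infinite multiplicative order. Your extra check that the fixed-point argument also covers $u=0$ is a correct and worthwhile patch: the corollary needs exactly that case for $(0,t_{3-i})$, while the paper's proof of the lemma only mentions $u\neq 0$.
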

\begin{proof}
Let $\phi_1, \phi_2$ be the $F$-linear maps afforded by Proposition~\ref{prop:rep}. 

Since $\phi_1$ and $\phi_2$ are not both equal to zero, there exists $i \in \{1, 2\}$ such that $\phi_i$ is non-zero. Hence there exists $v_i \in V_i$ such that $\phi_i(v_i) \neq 0$. Let us set $t_1 = y$ and $t_2 = x$. By Lemma~\ref{lem:K}, both $t_1$ and $t_2$ have infinite multiplicative order in $K^*$. In view of Lemma~\ref{lem:affine-group}, we deduce that no positive power of $(\phi_i(v_i), t_i)$ commutes with a positive power of $(0, t_{3-i})$.  Let $v_{3-i} $ be the zero vector in $V_{3-i}$. Viewing $v_1$ and $v_2$ as generators of $\Gamma_{\mathcal A}$, we have $\rho(v_i) = (\phi_i(v_i), t_i)$ and  $\rho(v_{3-i}) = (0, t_{3-i})$ by the definition of the representation $\rho$ afforded by Proposition~\ref{prop:rep}. We infer that no positive power of $v_1$ commutes with a positive power of $v_2$. 
\end{proof}
	
\begin{proof}[Proof of Theorem~\ref{thm:main}]
Let   $v_1, v_2 \in V_1 \oplus V_2$ be the pair afforded by Corollary~\ref{cor:non-commuting-pair}. It follows from Proposition~\ref{prop:anti-torus} that they span an anti-torus. The fact that this anti-torus is fractal  follows from Proposition~\ref{prop:anti-tori:substitution}. 
\end{proof}

We finish with a clarification of the link between bi-reversible automata of matrix type and the BMW group $\Gamma_0$ from Corollary~\ref{cor:first-fractal-example}.

\begin{remark}\label{rem:Gamma0}
Set $F = \FF_2$. Let $\mathcal A_0 = (V_1, V_2, a, b, c, d)$ be the Mealy automaton of matrix type defined by setting $V_1 = V_2 = F^2$, $a = d = \left(\begin{matrix} 1& 1 \\ 0 & 1 \end{matrix}\right)$, $b = \left(\begin{matrix} 1& 0 \\ 0 & 1 \end{matrix}\right)$ and $c= \left(\begin{matrix} 1& 1 \\  1& 0 \end{matrix}\right)$. Using Lemma~\ref{lem:birev}, it is easily seen that $\mathcal A_0$ is bi-reversible; hence $\Gamma_{\mathcal A_0}$ is a BMW group. 

Since $V_1$ and $V_2$ are two copies of the same $F$-vector space, and since $\Gamma_{\mathcal A_0}$ is generated by the disjoint union $V_1 \sqcup V_2$, it is convenient to introduce the following notation that distinguishes the vectors in $V_1$ from those in $V_2$:  For $i=1, 2$, we shall view the elements of $V_i$  as column vectors represented by  symbols  $(  x_1 , x_2 )^\top_i$, with $x_1, x_2 \in F$. 

Using the respective presentations of  $\Gamma_{\mathcal A_0}$ and $\Gamma_0$, one can check that the assignments
\begin{align*}
( 0,0 )^\top_1 & \mapsto a^2 \\
( 1,0 )^\top_1 &\mapsto ab \\
(0,1 )^\top_1 &\mapsto ba \\
(1,1 )^\top_1 &\mapsto b^2 \\
( 0,0 )^\top_2 & \mapsto yt \\
( 1,0 )^\top_2 &\mapsto yx \\
(0,1 )^\top_2 &\mapsto zx\\
(1,1 )^\top_2 &\mapsto zt.
\end{align*}
extend to a homomorphism 	$\rho \colon \Gamma_{\mathcal A_0} \to \Gamma_0 $. This transports the fractal anti-tori in $\Gamma_{\mathcal A_0}$ afforded by Theorem~\ref{thm:anti-torus} to anti-tori in $\Gamma_0$. This gives a conceptual explanation for the  fractal feature of Figure~\ref{fig1} (hence also the self-similiarity of \cite[Figure~9]{GriSav}, as mentioned in the introduction).
\end{remark}
		
%For $i=1, 2$, let also $\sigma_i \colon V_i \to \Sigma_i$ be a bijection between $V_i$ and a set $\Sigma_i$. 

\bibliographystyle{amsalpha} 
\bibliography{Antitori.bib}

@book {BH,
    AUTHOR = {Bridson, Martin R. and Haefliger, Andr\'e},
     TITLE = {Metric spaces of non-positive curvature},
    SERIES = {Grundlehren der Mathematischen Wissenschaften [Fundamental
              Principles of Mathematical Sciences]},
    VOLUME = {319},
 PUBLISHER = {Springer-Verlag, Berlin},
      YEAR = {1999},
     PAGES = {xxii+643},
      ISBN = {3-540-64324-9},
   MRCLASS = {53C23 (20F65 53C70 57M07)},
  MRNUMBER = {1744486},
MRREVIEWER = {Athanase Papadopoulos},
       DOI = {10.1007/978-3-662-12494-9},
       URL = {http://dx.doi.org/10.1007/978-3-662-12494-9},
}

@book {Wise_phd,
    AUTHOR = {Wise, Daniel T.},
     TITLE = {Non-positively curved squared complexes: {A}periodic tilings
              and non-residually finite groups},
      NOTE = {Thesis (Ph.D.)--Princeton University},
 PUBLISHER = {ProQuest LLC, Ann Arbor, MI},
      YEAR = {1996},
     PAGES = {126},
      ISBN = {978-0591-07506-9},
   MRCLASS = {Thesis},
  MRNUMBER = {2694733},
       URL =
              {http://gateway.proquest.com/openurl?url_ver=Z39.88-2004&rft_val_fmt=info:ofi/fmt:kev:mtx:dissertation&res_dat=xri:pqdiss&rft_dat=xri:pqdiss:9701250},
}

@article {JW,
    AUTHOR = {Janzen, David and Wise, Daniel T.},
     TITLE = {A smallest irreducible lattice in the product of trees},
   JOURNAL = {Algebr. Geom. Topol.},
  FJOURNAL = {Algebraic \& Geometric Topology},
    VOLUME = {9},
      YEAR = {2009},
    NUMBER = {4},
     PAGES = {2191--2201},
      ISSN = {1472-2747},
   MRCLASS = {20F67 (20E08)},
  MRNUMBER = {2558308},
MRREVIEWER = {Matt T. Clay},
       DOI = {10.2140/agt.2009.9.2191},
       URL = {http://dx.doi.org/10.2140/agt.2009.9.2191},
}

@article {Wise_CSC,
	AUTHOR = {Wise, Daniel T.},
	TITLE = {Complete square complexes},
	JOURNAL = {Comment. Math. Helv.},
	FJOURNAL = {Commentarii Mathematici Helvetici. A Journal of the Swiss
	Mathematical Society},
	VOLUME = {82},
	YEAR = {2007},
	NUMBER = {4},
	PAGES = {683--724},
	ISSN = {0010-2571},
	MRCLASS = {20F67 (20F06)},
	MRNUMBER = {2341837},
	MRREVIEWER = {Richard Weidmann},
	DOI = {10.4171/CMH/107},
	URL = {http://dx.doi.org/10.4171/CMH/107},
}

@phdthesis {Radu_phd,
    AUTHOR = {Radu, Nicolas},
     TITLE = {Lattices and simple groups in trees and buildings: constructions and classifications}, 
     school={{U}niversit\'e catholique de {L}ouvain},
   year={2018},
}

@article {Rattaggi_GeomDed,
    AUTHOR = {Rattaggi, Diego},
     TITLE = {Anti-tori in square complex groups},
   JOURNAL = {Geom. Dedicata},
  FJOURNAL = {Geometriae Dedicata},
    VOLUME = {114},
      YEAR = {2005},
     PAGES = {189--207},
      ISSN = {0046-5755},
   MRCLASS = {20E05 (11R52 20F67)},
  MRNUMBER = {2174099},
MRREVIEWER = {Olivier Guichard},
       DOI = {10.1007/s10711-005-5538-9},
       URL = {http://dx.doi.org/10.1007/s10711-005-5538-9},
}

@incollection {Cap_survey,
    AUTHOR = {Caprace, Pierre-Emmanuel},
     TITLE = {Finite and infinite quotients of discrete and indiscrete
              groups},
 BOOKTITLE = {Groups {S}t {A}ndrews 2017 in {B}irmingham},
    SERIES = {London Math. Soc. Lecture Note Ser.},
    VOLUME = {455},
     PAGES = {16--69},
 PUBLISHER = {Cambridge Univ. Press, Cambridge},
      YEAR = {2019},
      ISBN = {978-1-108-72874-4},
   MRCLASS = {20F05 (20E08 20E26 20F65 20F67)},
  MRNUMBER = {3931408},
MRREVIEWER = {Vassilis\ Metaftsis},
}

@article {BoBo24,
    AUTHOR = {Bondarenko, Ievgen and Bondarenko, Nataliia},
     TITLE = {Anti-tori in quaternionic lattices over {$\Bbb F_q(t)$}},
   JOURNAL = {Algebra Discrete Math.},
  FJOURNAL = {Algebra and Discrete Mathematics},
    VOLUME = {37},
      YEAR = {2024},
    NUMBER = {2},
     PAGES = {171--180},
      ISSN = {1726-3255,2415-721X},
   MRCLASS = {20F65 (11R52 20F67)},
  MRNUMBER = {4766964},
}

@article {LPS90,
    AUTHOR = {Liebeck, Martin W. and Praeger, Cheryl E. and Saxl, Jan},
     TITLE = {The maximal factorizations of the finite simple groups and
              their automorphism groups},
   JOURNAL = {Mem. Amer. Math. Soc.},
  FJOURNAL = {Memoirs of the American Mathematical Society},
    VOLUME = {86},
      YEAR = {1990},
    NUMBER = {432},
     PAGES = {iv+151},
      ISSN = {0065-9266,1947-6221},
   MRCLASS = {20D40 (20D06 20D08 20G40)},
  MRNUMBER = {1016353},
MRREVIEWER = {Ulrich\ Dempwolff},
       DOI = {10.1090/memo/0432},
       URL = {https://doi.org/10.1090/memo/0432},
}

@article {BDR16,
    AUTHOR = {Bondarenko, Ievgen and D'Angeli, Daniele and Rodaro, Emanuele},
     TITLE = {The lamplighter group {$\Bbb{Z}_3\wr\Bbb{Z}$} generated by a
              bireversible automaton},
   JOURNAL = {Comm. Algebra},
  FJOURNAL = {Communications in Algebra},
    VOLUME = {44},
      YEAR = {2016},
    NUMBER = {12},
     PAGES = {5257--5268},
      ISSN = {0092-7872,1532-4125},
   MRCLASS = {20E22 (20E08 20F10 68Q70)},
  MRNUMBER = {3520274},
MRREVIEWER = {Alfredo\ Donno},
       DOI = {10.1080/00927872.2016.1172602},
       URL = {https://doi.org/10.1080/00927872.2016.1172602},
}

@article {GM,
    AUTHOR = {Glasner, Yair and Mozes, Shahar},
     TITLE = {Automata and square complexes},
   JOURNAL = {Geom. Dedicata},
  FJOURNAL = {Geometriae Dedicata},
    VOLUME = {111},
      YEAR = {2005},
     PAGES = {43--64},
      ISSN = {0046-5755,1572-9168},
   MRCLASS = {20M35 (20E08 20F05 37B10 37B15)},
  MRNUMBER = {2155175},
MRREVIEWER = {Alain\ Valette},
       DOI = {10.1007/s10711-004-1815-2},
       URL = {https://doi.org/10.1007/s10711-004-1815-2},
}

@unpublished{CapVas,
      title={On the self-similarity of rational power series with matrix coefficients}, 
      author={Pierre-Emmanuel Caprace and Justin Vast},
      year={2026},
      note={Preprint, arXiv:2605.22624},
      url={https://arxiv.org/abs/2605.22624}, 
}

@article {SkiSte,
    AUTHOR = {Skipper, Rachel and Steinberg, Benjamin},
     TITLE = {Lamplighter groups, bireversible automata, and rational series
              over finite rings},
   JOURNAL = {Groups Geom. Dyn.},
  FJOURNAL = {Groups, Geometry, and Dynamics},
    VOLUME = {14},
      YEAR = {2020},
    NUMBER = {2},
     PAGES = {567--589},
      ISSN = {1661-7207,1661-7215},
   MRCLASS = {20F65 (20E08 20F10)},
  MRNUMBER = {4118629},
MRREVIEWER = {Vitaly\ A.\ Roman\cprime kov},
       DOI = {10.4171/GGD/555},
       URL = {https://doi.org/10.4171/GGD/555},
}

@article {Fran,
    AUTHOR = {Francoeur, Dominik},
     TITLE = {Bireversible automata generating lamplighter groups},
   JOURNAL = {Bull. Lond. Math. Soc.},
  FJOURNAL = {Bulletin of the London Mathematical Society},
    VOLUME = {55},
      YEAR = {2023},
    NUMBER = {2},
     PAGES = {990--997},
      ISSN = {0024-6093,1469-2120},
   MRCLASS = {20F10 (20E08 20F65)},
  MRNUMBER = {4581338},
MRREVIEWER = {Enric\ Ventura Capell},
       DOI = {10.1112/blms.12772},
       URL = {https://doi.org/10.1112/blms.12772},
}

@article {ShSt,
AUTHOR = { Shallit, J. and Stolfi, J.},
     TITLE = {Two methods for generating fractals},
   JOURNAL = {Comp. Graphics},
    VOLUME = {13},
      YEAR = {1989},
     PAGES = {185--191},
}

@book {AlSh,
    AUTHOR = {Allouche, Jean-Paul and Shallit, Jeffrey},
     TITLE = {Automatic sequences},
      NOTE = {Theory, applications, generalizations},
 PUBLISHER = {Cambridge University Press, Cambridge},
      YEAR = {2003},
     PAGES = {xvi+571},
      ISBN = {0-521-82332-3},
   MRCLASS = {11B85 (11Z05 37A45 37B10 68Q45 68R15 94A45)},
  MRNUMBER = {1997038},
MRREVIEWER = {Val\'erie\ Berth\'e},
       DOI = {10.1017/CBO9780511546563},
       URL = {https://doi.org/10.1017/CBO9780511546563},
}

@book {Nekra,
    AUTHOR = {Nekrashevych, Volodymyr},
     TITLE = {Self-similar groups},
    SERIES = {Mathematical Surveys and Monographs},
    VOLUME = {117},
 PUBLISHER = {American Mathematical Society, Providence, RI},
      YEAR = {2005},
     PAGES = {xii+231},
      ISBN = {0-8218-3831-8},
   MRCLASS = {20E08 (20F65 37B15 37F10)},
  MRNUMBER = {2162164},
MRREVIEWER = {Laurent\ Bartholdi},
       DOI = {10.1090/surv/117},
       URL = {https://doi.org/10.1090/surv/117},
}

@article {Radu,
    AUTHOR = {Radu, Nicolas},
     TITLE = {New simple lattices in products of trees and their
              projections},
      NOTE = {With an appendix by Pierre-Emmanuel Caprace},
   JOURNAL = {Canad. J. Math.},
  FJOURNAL = {Canadian Journal of Mathematics. Journal Canadien de
              Math\'ematiques},
    VOLUME = {72},
      YEAR = {2020},
    NUMBER = {6},
     PAGES = {1624--1690},
      ISSN = {0008-414X,1496-4279},
   MRCLASS = {20E08 (20E32 20F65 22E40)},
  MRNUMBER = {4176704},
MRREVIEWER = {Stefan\ Kohl},
       DOI = {10.4153/s0008414x19000506},
       URL = {https://doi.org/10.4153/s0008414x19000506},
}

@article {GLNS,
    AUTHOR = {Grigorchuk, R. and Leonov, Y. and Nekrashevych, V. and
              Sushchansky, V.},
     TITLE = {Self-similar groups, automatic sequences, and unitriangular
              representations},
   JOURNAL = {Bull. Math. Sci.},
  FJOURNAL = {Bulletin of Mathematical Sciences},
    VOLUME = {6},
      YEAR = {2016},
    NUMBER = {2},
     PAGES = {231--285},
      ISSN = {1664-3607,1664-3615},
   MRCLASS = {20E08 (20F10 68Q45 68R15)},
  MRNUMBER = {3510692},
       DOI = {10.1007/s13373-015-0077-7},
       URL = {https://doi.org/10.1007/s13373-015-0077-7},
}

@incollection {BGN,
    AUTHOR = {Bartholdi, Laurent and Grigorchuk, Rostislav and Nekrashevych,
              Volodymyr},
     TITLE = {From fractal groups to fractal sets},
 BOOKTITLE = {Fractals in {G}raz 2001},
    SERIES = {Trends Math.},
     PAGES = {25--118},
 PUBLISHER = {Birkh\"auser, Basel},
      YEAR = {2003},
      ISBN = {3-7643-7006-8},
   MRCLASS = {20E08 (05B45 20F65 28A80 37F50)},
  MRNUMBER = {2091700},
MRREVIEWER = {Richard\ Kenyon},
}

@article {GriSav,
    AUTHOR = {Grigorchuk, Rostislav and Savchuk, Dmytro},
     TITLE = {Ergodic decomposition of group actions on rooted trees},
   JOURNAL = {Tr. Mat. Inst. Steklova},
  FJOURNAL = {Trudy Matematicheskogo Instituta Imeni V. A. Steklova},
    VOLUME = {292},
      YEAR = {2016},
     PAGES = {100--117},
      ISSN = {0371-9685,3034-1809},
      ISBN = {5-7846-0137-7; 978-5-7846-0137-7},
   MRCLASS = {37A15 (20E08 28D15 37A20 60A10)},
  MRNUMBER = {3628455},
MRREVIEWER = {Jan\ Kwiatkowski},
       DOI = {10.1134/S0371968516010064},
       URL = {https://doi.org/10.1134/S0371968516010064},
}

@article {FM,
    AUTHOR = {Francoeur, Dominik and Mitrofanov, Ivan},
     TITLE = {On the existence of free subsemigroups in reversible automata
              semigroups},
   JOURNAL = {Groups Geom. Dyn.},
  FJOURNAL = {Groups, Geometry, and Dynamics},
    VOLUME = {15},
      YEAR = {2021},
    NUMBER = {3},
     PAGES = {1103--1132},
      ISSN = {1661-7207,1661-7215},
   MRCLASS = {20M35},
  MRNUMBER = {4322023},
MRREVIEWER = {Alessandra\ Cherubini},
       DOI = {10.4171/ggd/626},
       URL = {https://doi.org/10.4171/ggd/626},
}

@article {GodKli,
    AUTHOR = {Godin, Thibault and Klimann, Ines},
     TITLE = {On bireversible {M}ealy automata and the {B}urnside problem},
   JOURNAL = {Theoret. Comput. Sci.},
  FJOURNAL = {Theoretical Computer Science},
    VOLUME = {707},
      YEAR = {2018},
     PAGES = {24--35},
      ISSN = {0304-3975,1879-2294},
   MRCLASS = {68Q45},
  MRNUMBER = {3734397},
MRREVIEWER = {Yu\ Qi\ Guo},
       DOI = {10.1016/j.tcs.2017.10.005},
       URL = {https://doi.org/10.1016/j.tcs.2017.10.005},
}

@article {KPS,
    AUTHOR = {Klimann, Ines and Picantin, Matthieu and Savchuk, Dmytro},
     TITLE = {Orbit automata as a new tool to attack the order problem in
              automaton groups},
   JOURNAL = {J. Algebra},
  FJOURNAL = {Journal of Algebra},
    VOLUME = {445},
      YEAR = {2016},
     PAGES = {433--457},
      ISSN = {0021-8693,1090-266X},
   MRCLASS = {20E08 (20F10 68Q70)},
  MRNUMBER = {3418065},
MRREVIEWER = {Vassilis\ Metaftsis},
       DOI = {10.1016/j.jalgebra.2015.07.003},
       URL = {https://doi.org/10.1016/j.jalgebra.2015.07.003},
}

@unpublished{CV_chains,
      title={Ascending chains of irreducible lattices, bi-reversible automata and affine arithmetic groups}, 
      author={Pierre-Emmanuel Caprace and Justin Vast},
      year={2026},
      note={Preprint, arXiv:2607.14856},
      url={https://arxiv.org/abs/2607.14856}, 
}

\end{document}